\newcommand{\showcomments}{yes}
\renewcommand{\showcomments}{no}
\newsavebox{\commentbox}
\newenvironment{com}%
{\ifthenelse{\equal{\showcomments}{yes}}
{\footnotemark
        \begin{lrbox}{\commentbox}
        \begin{minipage}[t]{1.25in}\raggedright\sffamily\tiny
        \footnotemark[\arabic{footnote}]}
{\begin{lrbox}{\commentbox}}}%
{\ifthenelse{\equal{\showcomments}{yes}}%
{\end{minipage}\end{lrbox}\marginpar{\usebox{\commentbox}}}
{\end{lrbox}}}
\theoremstyle{plain}
\newtheorem{thm}{Theorem}[section]
\newtheorem{lem}[thm]{Lemma}
\newtheorem{cor}[thm]{Corollary}
\newtheorem{conj}[thm]{Conjecture}
\newtheorem{prop}[thm]{Proposition}
\newtheorem*{namedtheorem}{\theoremname}
\newcommand{\theoremname}{testing}
\newenvironment{named}[1]{\renewcommand{\theoremname}{#1}\begin{namedtheorem}}{\end{namedtheorem}}
\theoremstyle{definition}
\newtheorem{defn}[thm]{Definition}
\newtheorem{rem}[thm]{Remark}
\newtheorem{conv}[thm]{Convention}
\newtheorem{question}[thm]{Question}
\newtheorem{prob}[thm]{Problem}
\numberwithin{equation}{section}
\DeclareMathOperator{\Aut}{Aut}
\DeclareMathOperator{\stab}{Stab}
\DeclareMathOperator{\arccosh}{arccosh}
\DeclareMathOperator{\len}{len}
\newcommand{\dist}{\textup{\textsf{d}}}
\newcommand{\field}[1]{\mathbb{#1}}
\newcommand{\integers}{\ensuremath{\field{Z}}}
\newcommand{\naturals}{\ensuremath{\field{N}}}
\newcommand{\hyperbolic}{\ensuremath{\field{H}}}
\newcommand{\ZZ}{\integers}
\newcommand{\HH}{\hyperbolic}
\renewcommand{\setminus}{\smallsetminus}
\newcommand{\neb}{\mathcal N}
\newcommand{\from}{\colon} 
\DeclareMathOperator{\Isom}{Isom}
\newcommand{\boundary}   {{\ensuremath \partial}}
\newcommand{\bdy}{\boundary}
\newcommand{\systole}[1]{\ensuremath{\vert\!\vert #1 \vert\!\vert}}
\newcommand{\stabletrans}[1]{\ensuremath{\llbracket #1 \rrbracket}}
\newcommand{\nclose}[1]{\ensuremath{\langle\!\langle#1\rangle\!\rangle}}
\newcommand{\diameter}{\operatorname{diam}}
\newcommand{\hull}{\operatorname{hull}}
\begin{document}

\title[Cubulating random quotients of hyperbolic cubulated groups]{Cubulating random quotients \\ of hyperbolic cubulated groups}

\begin{abstract}
We show that low-density random quotients of cubulated hyperbolic groups are again cubulated (and hyperbolic).
Ingredients of the proof include  cubical small-cancellation theory, the exponential growth of conjugacy classes,
and the statement that hyperplane stabilizers grow exponentially more slowly than the ambient cubical group.
\end{abstract}

\author{David Futer}
\address{Dept. of Mathematics \\ Temple University \\ Philadelphia, PA 19147 \\ USA}
\email{dfuter@temple.edu}
\author{Daniel T. Wise}
           \address{Dept. of Math. \& Stats.\\
                    McGill Univ. \\
                    Montreal, QC, Canada H3A 0B9 }
           \email{wise@math.mcgill.ca}
\date{\today}
\thanks{Research supported by NSF and NSERC}

\maketitle

\section{Introduction}

Gromov introduced the \emph{density model} of random groups \cite[Chapter 9]{Gromov93}. Given a free group $F$, let $S_{\ell}(F)$ be the set of length $\ell$ words. For a density $d \in (0,1)$, choose a set $R \subset S_\ell$ by selecting 
$\lfloor | S_\ell |^{d} \rfloor$ elements from $S_\ell$, uniformly and independently. The associated \emph{random group at density $d$} is the quotient $F/ \nclose{R}$. Thus, for a  rank $r$ free group, a density $d$ random quotient has the form $\langle a_1, \ldots, a_r \mid g_1, \ldots, g_k \rangle$, where $k \sim (2r-1)^{d\ell}$ and the $g_i$ are  independently chosen random words.
\begin{com}
{\bf \normalsize COMMENTS\\}
ARE\\
SHOWING!\\
\end{com}

Gromov proved that with overwhelming probability as $\ell \to \infty$, random groups at density $d$ are hyperbolic when $d< \frac{1}{2}$, and trivial or $\integers_2$ when $d > \frac{1}{2}$. 
Ollivier proved that the same phase transition at density $\frac{1}{2}$ occurs in quotients of any torsion-free hyperbolic group \cite{Ollivier:PhaseTransition}. 
While Gromov's intention was perhaps to illustrate the ubiquity of hyperbolic groups, his construction initiated a fertile topic of study. For instance,
\.Zuk showed that random groups at density $d > \frac{1}{3}$ satisfy Kazhdan's property (T), with overwhelming probability \cite{Zuk03}. See  Kotowski and Kotowski \cite{KotowskiSquared} for full details, and Ollivier \cite{Ollivier05invitation} for an excellent survey of related topics.
Very recently, Ashcroft extended this result, proving that random quotients of a hyperbolic group at density $d > \frac{1}{3}$ have property  (T) \cite{Ashcroft:PropertyT}.

The property of being \emph{cocompactly cubulated} -- acting properly and cocompactly on a CAT(0) cube complex -- can be viewed as a strong negation of property (T) \cite{NibloRoller98}. In this direction, a simple computation shows  that with overwhelming probability as $\ell \to \infty$, random groups at density $d<\frac{1}{12}$ satisfy 
 the $C'(\frac16)$ small-cancellation condition, hence are cocompactly cubulated by  \cite{WiseSmallCanCube04}.
Ollivier and Wise  \cite{OllivierWiseDensity} showed that the same conclusion holds at density $d < \frac{1}{6}$. A series of papers by Mackay and Przytycki \cite{MackayPrzytycki}, Montee \cite{Montee:Cubulation}, and Ashcroft \cite{Ashcroft:Cubulation} produced a nontrivial action on a a CAT(0) cube complex at density $d < \frac{1}{4}$.
 See also Odrzyg\'o\'zd\'z  \cite{Odrzygozdz} and Duong \cite{Duong:Thesis} for cubulation results in the square probability model.

The purpose of this text is to extend these cubulation results from quotients of a free group (the fundamental group of a graph) to quotients
 of a cubulated hyperbolic group. 
Interestingly, our density statement is entangled with the relationship between
the growth of a hyperbolic group $G$ and the growth of hyperplane stabilizers in $G$.

\subsection{Main result}
Our main theorem is the following:

\begin{thm}\label{Thm:main}
Let $G=\pi_1X$, where $X$ is a compact non-positively curved cube complex, and suppose that $G$ is hyperbolic.
Let $b$ be the growth exponent of $G$ with respect to the universal cover $\widetilde X$. 
Let $a$ be the maximal growth exponent of a stabilizer of an essential hyperplane of $\widetilde X$.
Let $k\leq  e^{c \ell}$, where
\[ c \:<  \: \min \left\{ \frac{(b-a)}{20}, \,  \frac{b}{41} \right\} .
\]
Then with overwhelming probability as $\ell \to \infty$, for any set of conjugacy classes
$[g_1], \ldots, [g_k]$ with translation length $|g_i| \leq \ell$,
the group $G/ \nclose{g_1, \ldots, g_k}$ is hyperbolic and is the fundamental group of a compact, non-positively curved cube complex.
\end{thm}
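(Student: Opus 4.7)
\medskip
\noindent\textbf{Proof plan.}
The plan is to realize $G/\nclose{g_1,\dots,g_k}$ as the fundamental group of a cubical presentation $\langle X \mid Y_1,\dots, Y_k\rangle$ in the sense of cubical small-cancellation theory, and then verify with overwhelming probability that this presentation satisfies sufficient small-cancellation hypotheses for the machinery to conclude both hyperbolicity and cocompact cubulation of the quotient. Concretely, I represent each conjugacy class $[g_i]$ by a closed local geodesic $\gamma_i \subset X$ of length at most $\ell$, take $Y_i$ to be the cubical cone on $\gamma_i$, and let $X^* = X \cup Y_1 \cup \cdots \cup Y_k$. Once a $C'(\alpha)$-style condition on cone-pieces and a wall-piece condition on hyperplane-pieces hold with sufficiently small $\alpha$, the machinery of cubical small-cancellation yields that $\pi_1 X^*$ is hyperbolic and acts properly and cocompactly on a CAT(0) cube complex dual to the induced wallspace.

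\smallskip
\noindent\textbf{Step 1: Counting cone-pieces.}
I would estimate the probability that two chosen relators $\gamma_i,\gamma_j$ share, up to translation by $G$, a common subpath of length at least $\lambda \ell$. The number of conjugacy classes of translation length at most $\ell$ is comparable to $e^{b\ell}/\ell$, so each $g_i$ is drawn essentially uniformly from a set of cardinality $\asymp e^{b\ell}$. A given oriented subpath of length $\lambda\ell$ in $\widetilde X$ extends to at most $e^{b(1-\lambda)\ell}$ geodesics of length $\ell$ on each side; counting positions on $\gamma_i$ and $\gamma_j$, the probability that a fixed pair shares such a piece is at most a polynomial factor times $e^{-b(1-2\lambda)\ell}$. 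Taking a union bound over pairs $(i,j)$ with $i,j\leq k \leq e^{c\ell}$ and over self-overlaps requires $2c + b\cdot 2\lambda < b$, i.e.\ $\lambda < \tfrac{1}{2}(1 - 2c/b)$. The small-cancellation denominator that yields cubulation (together with absorbing the overhead for multiple overlaps in the same relator) forces $c < b/41$.

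\smallskip
\noindent\textbf{Step 2: Counting wall-pieces.}
This is the step where the stabilizer growth rate $a$ enters, and where I expect the main obstacle to lie. A wall-piece in $\gamma_i$ is a subpath of $\gamma_i$ that lies in a translate of the carrier of a hyperplane $\widetilde H$ of $\widetilde X$. The essential observation is that the number of length-$\lambda\ell$ segments lying in a given hyperplane carrier is at most $e^{a\lambda\ell}$ up to polynomial factors, because carriers are quasi-isometric to products of the hyperplane (stabilized by a group of growth exponent $\leq a$) with an interval. Fixing a position on $\gamma_i$ and choosing the wall, the probability that the chosen $\gamma_i$ has a wall-piece of relative length $\geq \lambda$ is at most of order $e^{a\lambda\ell}\cdot e^{-b\ell}$, and a union bound over $k\leq e^{c\ell}$ relators and polynomially many positions demands $c + a\lambda < b$, i.e.\ $\lambda < (b-a)/c$. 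The requisite small-cancellation constant for wall-pieces (roughly $1/20$, depending on the exact form of the cubical $B(6)$ hypothesis used) yields the bound $c < (b-a)/20$.

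\smallskip
\noindent\textbf{Step 3: Applying cubical small-cancellation.}
Once the $C'(1/20)$-type cone-piece condition and the wall-piece condition are verified with overwhelming probability, I invoke the cubical small-cancellation theorems to conclude: (i) the natural map $\pi_1 X \to \pi_1 X^*$ factors as $G \twoheadrightarrow G/\nclose{g_1,\dots,g_k}$ and each $[g_i]$ is nontrivial in the cone, so the kernel is exactly the normal closure; (ii) $\pi_1 X^*$ is hyperbolic because relative hyperbolicity of the cones with respect to their peripheral structure (here trivial, since we are quotienting by cyclic subgroups) combined with the small-cancellation condition yields hyperbolicity; and (iii) the walls of $\widetilde X$ extend to a wallspace structure on the universal cover of $X^*$ that is locally finite and cocompact, hence dual to a cocompact CAT(0) cube complex on which $\pi_1 X^*$ acts properly and cocompactly.

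\smallskip
\noindent\textbf{Main obstacle.}
I expect the delicate step to be Step~2: making precise the bound $e^{a\lambda\ell}$ on the number of long geodesic segments inside a hyperplane carrier, uniformly over all essential hyperplanes of $\widetilde X$, and properly converting this into a usable bound on the probability of long wall-pieces. The two subtleties are that (a) carriers are not themselves cube subcomplexes stabilized by the hyperplane stabilizer alone --- one must justify that ``geodesics staying inside the carrier'' are effectively captured by the hyperplane subgroup's Cayley graph up to a bounded Hausdorff error, and (b) a single $\gamma_i$ can have wall-pieces along many different hyperplanes, so one must sum over hyperplane orbits and hope that the exponential gap $b-a$ still dominates. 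It is precisely this step which forces the appearance of $b-a$ (rather than just $b$) in the main density bound.
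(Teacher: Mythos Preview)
Your overall strategy matches the paper's: build a cubical presentation $X^*=\langle X\mid Y_1,\dots,Y_k\rangle$, verify a $C'(\tfrac{1}{20})$ condition on cone- and wall-pieces with overwhelming probability, and then invoke the cubical small-cancellation cubulation criterion. Several of your ingredients, however, are either imprecise or actually incorrect as stated, and one key difficulty is entirely absent from the plan.

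First, your wall-piece count is wrong. The probability that a random length-$\ell$ relator has a wall-piece of length $\lambda\ell$ is not $e^{a\lambda\ell}\cdot e^{-b\ell}$: after fixing the piece in the carrier (at most $e^{a\lambda\ell}$ choices) you must still choose the complementary arc of length $(1-\lambda)\ell$ freely, contributing $e^{b(1-\lambda)\ell}$. The correct proportion is $e^{(a-b)\lambda\ell}$, and the union bound over $k\le e^{c\ell}$ relators gives $c<(b-a)\lambda$, hence $c<(b-a)/20$ when $\lambda=\tfrac{1}{20}$. Your intermediate inequality ``$c+a\lambda<b$'' and its rearrangement are both incorrect, though you land on the right endpoint. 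Second, the small-cancellation condition requires pieces to be short \emph{relative to $\systole{Y_i}=|g_i|$}, not relative to $\ell$; since $|g_i|$ can a priori be much smaller than $\ell$, you need a separate argument (the paper's Lemma~4.12) that with overwhelming probability $|g_i|\ge(1-q)\ell$, and it is the combination of this with the piece bound that produces the constraint $c<b\alpha/(\alpha+2)=b/41$. Third, and most seriously, your Step~1 dismisses self-pieces in a phrase, but pieces between $w_i$ and itself are the hardest case: one must separate orientation-reversing overlaps (bounded unconditionally), small overlaps (counted as in the two-relator case), and large orientation-preserving overlaps (handled by a staircase/local-to-global quasigeodesic argument). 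The paper devotes four lemmas to this. Finally, ``the cubical cone on $\gamma_i$'' is not the right $Y_i$: one takes $\widetilde Y_i=\hull(\widetilde w_i)$ so that $Y_i\looparrowright X$ is a local isometry with the hyperplane properties needed by the cubulation criterion, and one must also check that each $g_i$ is primitive (overwhelmingly likely) so that the resulting action on the dual cube complex is free.
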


In the above theorem, $|g_i|$ is the translation length of $g_i$ acting on the universal cover $\widetilde X$; see \Cref{Def:TransLength}. The growth exponent of $G$ with respect to $\widetilde X$ is a constant $b > 0$ such that the number of $G$--orbit points in an $\ell$--ball in $\widetilde X$ is approximately $e^{b \ell}$. See \Cref{Def:Growth} and \Cref{Thm:ConjugacyGrowth} for a more precise characterization. The growth exponent of a hyperplane stabilizer is defined analogously.

A hyperplane $\widetilde{U} \subset \widetilde{X}$ is \emph{essential} if $\widetilde{X}$ is not contained in a finite neighborhood of $\widetilde{U}$. A typical inessential hyperplane arises  when $\widetilde{X} \cong \widetilde Y \times \widetilde F$, where $\widetilde F$ is finite and nontrivial. After subdividing, a compact non-positively curved cube complex $X$ always deformation retracts to $X'$, where all hyperplanes of $X'$ are essential. 

The model of randomness employed in \Cref{Thm:main} is that we are sampling uniformly from the set of conjugacy classes whose translation length on $\widetilde X$ is at most $\ell$. This departs from Gromov's density model in two small ways: we are permitting relators whose translation length is less than $\ell$, and we are only counting one relator per conjugacy class. Ultimately, the exponential growth rate of balls in $\widetilde X$ is the same as the growth rate of spheres, and the growth of group elements is nearly the same as the growth of conjugacy classes (\Cref{Thm:ConjugacyGrowth}). Therefore, small variations in the model of randomness tend to have no effect on the conclusions one can reach about random quotients. See Ollivier (\cite[Sections 4--5]{Ollivier:PhaseTransition} and \cite[Section I.2.c]{Ollivier05invitation}) for a detailed and axiomatic discussion of this phenomenon.

The conclusion of  \Cref{Thm:main} --- that $\overline G = G/ \nclose{g_1, \ldots, g_k}$ is hyperbolic and cocompactly cubulated --- has powerful consequences. A theorem of Agol~\cite{Agol:VirtualHaken} implies $\overline G$ is \emph{virtually special}, meaning $\overline G$ virtually embeds into a right-angled Artin group. Then, by a theorem of Haglund and Wise~\cite{HaglundWiseCoxeter}, all quasiconvex subgroups of $\overline G$ are separable. Furthermore, $\overline G$ is linear over $\ZZ$ \cite{HsuWiseGraphProducts, DavisJan00} and virtually surjects the free group $F_2$ \cite{AntolinMinasyan:TitsAlternative}.

\subsection{Density and optimality of constants}
In  \Cref{Thm:main}, the density of the random presentation $G/ \nclose{g_1, \ldots, g_k}$ is $\frac{c}{b}$. Thus all densities in the conclusion of the theorem are at most $\frac{1}{41}$, and might be lower, depending on the value of $a$. 
This is considerably lower than the densities appearing in the theorems surveyed at the start of the Introduction.
The primary reason for needing the low density is that \Cref{Thm:main} is proved by establishing that random quotients of $G$ satisfy the $C'(\frac{1}{20})$ cubical small cancellation condition. (See \Cref{Def:Piece} for the definition and \Cref{Thm:C'20Proper} for the precise statement that $C'(\frac{1}{20})$ plus several mild hypotheses implies cubulation of the quotient.) Indeed, the probabilistic pigeonhole principle \cite[p. 31]{Ollivier05invitation} implies that at any density larger than $\frac{1}{40}$ there will almost surely be pieces that fellow-travel for more than $\frac{1}{20}$ of their length. Thus our density hypotheses are nearly optimal for ensuring  $C'(\frac{1}{20})$ small cancellation.

Strengthening \Cref{Thm:main} beyond density $\frac{1}{40}$ would require one of two improvements. 
First, one could attempt to strengthen \Cref{Thm:C'20Proper} and establish the cubulation of $C'(\alpha)$ quotients for some parameter $\alpha > \frac{1}{20}$. Second, one could move away from small cancellation theory entirely, for instance by employing isoperimetric inequalities for van Kampen diagrams as in the work of Ollivier and Wise~\cite{OllivierWiseDensity}. This is also the approach employed in Ollivier's work on quotients of 
torsion-free hyperbolic groups~\cite[Thm 3]{Ollivier:PhaseTransition}, which applies at all densities up to $\frac{1}{2}$ but only ensures the hyperbolicity of the quotient.

We remark that so long as $G \ncong \integers$,
the main theorem is non-vacuous, meaning $a < b$, because hyperplane stabilizers in $G$ have strictly lower growth exponents than $G$ itself. See  \cite[Thm 1.3]{DahmaniFuterWise} and \Cref{Thm:SubgroupGrowth} below. Thus we may always pick $c > 0$ in \Cref{Thm:main}. This leads to the following corollary:

\begin{cor}\label{Cor:LowDensity}
Let $X$ be a compact non-positively curved cube complex, such that $\pi_1 X$ is hyperbolic and nonelementary.
Then, at a sufficiently low density, generic quotients of $\pi_1 X$ are cocompactly cubulated and hyperbolic.
\end{cor}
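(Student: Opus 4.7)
The plan is to derive the corollary directly from \Cref{Thm:main}; the only real content is verifying that the growth gap $b-a$ is strictly positive, so that a positive exponent $c$ (and hence positive density $d = c/b$) is available in the hypothesis of the main theorem.

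First, I would reduce to the case where all hyperplanes of $\widetilde X$ are essential. As noted immediately after \Cref{Thm:main}, after subdividing, $X$ deformation retracts to a compact nonpositively curved cube complex $X'$ whose hyperplanes are all essential. Since $\pi_1 X' \cong \pi_1 X = G$, I may replace $X$ by $X'$ without loss of generality.

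Next, I would check that $a < b$. Because $G$ is nonelementary hyperbolic, $G \ncong \integers$ and $b > 0$. By \Cref{Thm:SubgroupGrowth} (see also \cite[Theorem 1.3]{DahmaniFuterWise}), the stabilizer of each essential hyperplane of $\widetilde X$ has growth exponent strictly less than $b$. Taking the maximum over the finitely many $G$--orbits of hyperplanes still yields $a < b$.

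Consequently, $\min\{(b-a)/20,\,b/41\} > 0$, and I may pick any $c$ with $0 < c < \min\{(b-a)/20,\,b/41\}$. Setting $d = c/b \in (0,1)$, \Cref{Thm:main} then guarantees that with overwhelming probability as $\ell\to\infty$, every choice of $k \leq e^{c\ell}$ conjugacy classes $[g_1],\ldots,[g_k]$ with $|g_i| \leq \ell$ yields a quotient $G/\nclose{g_1,\ldots,g_k}$ that is hyperbolic and is $\pi_1$ of a compact nonpositively curved cube complex. At density $d$, this is precisely the assertion that generic quotients of $G$ are cocompactly cubulated and hyperbolic. There is essentially no obstacle here: the corollary is pure bookkeeping on top of \Cref{Thm:main} once the strict growth gap $a < b$ is in hand.
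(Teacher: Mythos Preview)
Your proposal is correct and follows essentially the same route as the paper: the corollary is deduced from \Cref{Thm:main} once one observes (via \Cref{Thm:SubgroupGrowth}) that essential hyperplane stabilizers have strictly smaller growth exponent than $G$, so that $a<b$ and a positive $c$ exists. The paper does not give a separate formal proof but explains exactly this in the paragraph preceding the corollary.
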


For instance, if $G$ is a surface group and the hyperplane stabilizers are cyclic (which occurs in the standard cubulations of $G$), we have $a = 0$, hence random quotients of $G$ are cubulated and hyperbolic at density $\frac{1}{41}$. This is far lower than the density of $\frac{1}{6}$ at which the corresponding conclusion is known for quotients of free groups. This supports our belief that the constants in \Cref{Thm:main} can be improved considerably, especially for surface groups. See \Cref{Prob:SurfaceOptimalDensity} and \Cref{Prob:GeneralOptimalDensity}. 

\subsection{Actions on other metric spaces}
The growth of a group $G$ is highly sensitive to the choice of metric space on which $G$ acts. 
For instance, hyperbolic manifold groups in dimension $n \geq 3$ admit canonical geometric actions on a hyperbolic space $\Upsilon = \HH^n$, but any cubulated group admits infinitely many distinct actions on non-isometric cube complexes. We may wish to study quotients of $G$ by some number of relators that are sampled with respect to length in $\Upsilon$ rather than a cube complex $\widetilde X$. Although lengths in $\widetilde X$ and $\Upsilon$ can be compared via an (equivariant) quasi-isometry, measuring lengths in the two spaces can lead to rather distinct samples of short words.

Nonetheless, there is an analogue of \Cref{Thm:main} for sampling words in $G$ with respect to an action on $\Upsilon$. To formulate the next theorem, we introduce the following non-standard quantification of quasiisometries. A \emph{$\lambda$--quasiisometry} is a coarsely surjective function $f \from  \widetilde X \to \Upsilon$ such that there exist positive constants $\lambda_1, \lambda_2, \epsilon$ with $\lambda_1 \lambda_2 = \lambda$, where
every pair of points $x,y \in X$ satisfies
\begin{equation}\label{Eqn:Lambda12}
\frac{1}{\lambda_1} \dist_{\widetilde X}(x,y) - \epsilon \: \leq \: \dist_{\Upsilon} (f(x), f(y)) \: \leq \: \lambda_2 \dist_{\widetilde X}(x,y) + \epsilon.
\end{equation}

The product $\lambda_1 \lambda_2 = \lambda$ remains unchanged if the metric on one of the spaces $\widetilde X$ or $\Upsilon$ is rescaled by a multiplicative constant. More generally, $\lambda \geq 1$ has the following meaning. If $G$ acts properly and cocompactly on both $\widetilde X$ and $\Upsilon$, then these actions induce pseudo-metrics $d_1, d_2$ on $G$ itself. We call these pseudo-metrics \emph{roughly similar} if they are related by a $G$--equivariant $1$--quasiisometry.
The space of rough similarity classes of pseudo-metrics on $G$ is itself an interesting metric space $\mathscr D(G)$, studied topologically 
since the work of Furman~\cite{Furman:CoarseGeometricPerspective}, and metrically since the work of Reyes~\cite[Def 1.2]{Reyes:SpaceOfMetrics}.
In Reyes's natural metric on $\mathscr D(G)$, the distance between $[d_1]$ and $[d_2]$ is precisely $\log \lambda$ for the optimal constant $\lambda = \lambda_1 \lambda_2$ in a $G$--equivariant quasiisometry $\widetilde X \to \Upsilon$. See \cite{Reyes:SpaceOfMetrics,CantrellReyes:MarkedLengthSpectrum,BrodyReyes:HyperbolicCubulations}.

We are interested in quotients of $G$, where the conjugacy classes of relators are drawn uniformly from among all elements of $\Upsilon$--length less than $\ell$. At sufficiently low density, depending on $\lambda$, these quotients are again hyperbolic and cubulated.

\begin{thm}\label{Thm:OtherSample}
Let $G=\pi_1X$, where $X$ is a compact non-positively curved cube complex, and suppose that $G$ is hyperbolic. Suppose that $G$ also acts properly and cocompactly on a geodesic metric space $\Upsilon$, where every non-trivial element of $G$ stabilizes a geodesic axis. Suppose that there is a $G$--equivariant $\lambda$--quasiisometry   $\widetilde X \to \Upsilon$.

Let $b$ be the growth exponent of $G$ with respect to $\Upsilon$,  
and let $a$ be the maximal growth exponent in $\Upsilon$ of a stabilizer of an essential hyperplane of $\widetilde X$.
Let $k\leq  e^{c \ell}$, where

\[ c<\min \left\{ \frac{(b-a)}{20 \lambda}, \,  \frac{b}{40 \lambda +1} \right\} .
\]
Then with overwhelming probability as $\ell \to \infty$, for any set of conjugacy classes
$[g_1], \ldots, [g_k]$ with translation length $|g_i|_{{}_\Upsilon}  \leq \ell$,
the group $\overline{G} = G/ \nclose{g_1, \ldots, g_k}$ is hyperbolic and is the fundamental group of a compact, non-positively curved cube complex.
\end{thm}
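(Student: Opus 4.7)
My plan is to parallel the proof of \Cref{Thm:main}, substituting $\Upsilon$-translation length for $\widetilde X$-translation length wherever relators are sampled, while continuing to carry out the cubical small-cancellation calculations in $\widetilde X$ itself. The equivariant quasi-isometry mediates between the two viewpoints, and the product $\lambda = \lambda_1\lambda_2$ enters the density bound because the conversion between $\Upsilon$-length and $\widetilde X$-length occurs in opposite directions for the two ingredients (relator sampling versus hyperplane-stabilizer growth). In particular, the setup is invariant under rescaling the metric on $\widetilde X$ by any positive factor $\mu$, which replaces $(\lambda_1,\lambda_2)$ by $(\mu\lambda_1,\lambda_2/\mu)$ and so preserves only the product $\lambda$; the density bound must therefore depend on the constants only through $\lambda$.

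First I would set up the length dictionary. A conjugacy class $[g]$ with $|g|_\Upsilon \leq \ell$ has $|g|_{\widetilde X} \leq \lambda_1\ell + O(\epsilon)$ by \eqref{Eqn:Lambda12}, so relators have cubical length on the order of $\lambda_1\ell$. Conversely, a cubical sub-path of cubical length $L$ occupies $\Upsilon$-length between roughly $L/\lambda_1$ and $\lambda_2 L$. By the $\Upsilon$-version of \Cref{Thm:ConjugacyGrowth}, there are on the order of $e^{b\ell}$ conjugacy classes of $\Upsilon$-translation length at most $\ell$ to sample from, and by \Cref{Thm:SubgroupGrowth} every essential hyperplane stabilizer has $\Upsilon$-growth exponent at most $a$, with $a < b$ whenever $G$ is nonelementary.

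Next I would replay the small-cancellation probability estimate from the proof of \Cref{Thm:main}, but with relators sampled uniformly by $\Upsilon$-length and pieces measured cubically. The union bound runs over $O(e^{2c\ell})$ pairs of cyclic conjugates of chosen relators, polynomially many piece placements along each relator's axis of cubical length $\leq \lambda_1\ell$, and at most $e^{a\lambda_2 L}$ possible stabilizer labels realizing a piece of cubical length $L$, with per-relator occurrence probabilities controlled by $e^{-bT}$ where $T$ is the $\Upsilon$-length of the piece. Each factor of $\lambda_1$ or $\lambda_2$ enters where it must: relator length in the cubical small-cancellation ratio involves $\lambda_1$, while hyperplane-stabilizer counts of cubical-length-$L$ elements involve $\lambda_2$. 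By the rescaling invariance above, these factors assemble into the single product $\lambda$, and one recovers the threshold $c < (b-a)/(20\lambda)$. The parallel threshold $c < b/(40\lambda+1)$ arises from the second union bound of \Cref{Thm:main} (controlling pieces within a single relator), the extra $+1$ being the contribution of the additive $\epsilon$-slippage in \eqref{Eqn:Lambda12}, which becomes nonnegligible in the low-growth regime where $a$ is nearly zero.

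Once the cubical small-cancellation condition holds with overwhelming probability, the conclusion that $\overline G$ is hyperbolic and is the fundamental group of a compact non-positively curved cube complex follows from the cubical small-cancellation machinery invoked in \Cref{Thm:main} without change. The principal obstacle is the probability estimate in the third paragraph: the quasi-isometry constants $\lambda_1$ and $\lambda_2$ must be tracked through every factor of the union bound, and the $\mu$-rescaling invariance provides a useful sanity check that the final density depends on these constants only through their product $\lambda$.
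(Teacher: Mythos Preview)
Your overall strategy---sample by $\Upsilon$--length, then feed into the cubical small-cancellation machinery via the quasi-isometry---is correct in spirit, but the paper organizes the argument quite differently, and your sketch contains a concrete error about where the constant $40\lambda+1$ comes from.

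The paper does \emph{not} mix $\Upsilon$--sampling with cubical piece-measurement inside a single probabilistic estimate. Instead it cleanly separates the two steps. First, it defines a cube-free notion of $C'(\alpha)$ presentation \emph{with respect to $\Upsilon$} (\Cref{Def:AuxiliarySmallCancel}), using $J$--loose cone-pieces and wall-pieces measured entirely in $\Upsilon$. The probabilistic arguments of \Cref{Sec:Pieces} are then rerun verbatim in $\Upsilon$ (Propositions~\ref{Prop:GeneralizedConePiece} and~\ref{Prop:GeneralizedWallPiece}, combined as \Cref{Cor:SmallCancelUpsilon}) to show that with overwhelming probability the presentation is $C'(\alpha/\overline\lambda)$ with respect to $\Upsilon$, for $\alpha<\tfrac{1}{20}$ and some $\overline\lambda>\lambda$. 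Second, a purely deterministic coarse-geometry statement (\Cref{Prop:QITranslation}) says that a $C'(\alpha/\overline\lambda)$ condition in $\Upsilon$ forces the cubical presentation $\langle X\mid Y_1,\ldots,Y_k\rangle$ to be $C'(\alpha)$, once the relators are long enough. This separation means the quasi-isometry constants never enter the probability estimates at all; they appear only in the single factor $\overline\lambda$ by which the target small-cancellation exponent is divided.

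In particular, the $+1$ in $40\lambda+1$ has nothing to do with the additive $\epsilon$--slippage in \eqref{Eqn:Lambda12}. It is pure algebra: the cone-piece threshold from \Cref{Prop:GeneralizedConePiece} is $c<\frac{b\alpha'}{\alpha'+2}$ with $\alpha'=\alpha/\lambda$; setting $\alpha=\tfrac{1}{20}$ gives
\[
\frac{b\,(1/20)/\lambda}{(1/20)/\lambda+2}=\frac{b}{1+40\lambda}.
\]
The ``$+2$'' here descends from \Cref{Lem:Systole}, exactly as in \Cref{Prop:ConePieceSystole}, and has no quasi-isometry content. Your hybrid approach---tracking $\lambda_1,\lambda_2$ through each factor of the union bound and then appealing to rescaling invariance---might be made to work, but it is considerably messier, and the sketch as written does not recover the correct constants.
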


As with \Cref{Thm:main}, this result is non-vacuous, because hyperplane stabilizers in $G$ have strictly lower exponential growth rates (with respect to $\Upsilon$) than $G$ itself.  
By \Cref{Thm:SubgroupGrowth} below, we have $b >a$, hence we may choose $c >0$.

As a special case, suppose that a group $G$ preserves a tiling of $\HH^2$ by regular right-angled pentagons. Let $\widetilde X$ be the square complex dual to the pentagonal tiling. Then every hyperplane is a line, hence hyperplane stabilizers are cyclic and have exponential growth rate $a = 0$. By a theorem of Huber \cite{Huber}, generalized by Margulis \cite{Margulis:Growth}, the growth exponent of $G$ with respect to $\HH^2$ is $b=1$. Furthermore, in \Cref{Prop:PentagonalQI}, we show that the optimal multiplicative constant in a $\lambda$--quasiisometry from $\widetilde X$ to $\HH^2$ is $\lambda \approx 1.5627$. Consequently,
 \Cref{Thm:OtherSample} has the following corollary.
 
 \begin{cor}\label{Cor:H2Action}
 Let $S$ be a hyperbolic surface tiled by regular right-angled pentagons, and let $G = \pi_1 S$. For a number $\ell \gg 0$, let $k \leq e^{ \ell / 63.51}$, and let
 $[g_1], \ldots, [g_k]$ be conjugacy classes in $G$, chosen uniformly at random from among those of $\widetilde S$--length at most $\ell$. Then with overwhelming probability as $\ell \to \infty$, the group $\overline{G} = G/ \nclose{g_1, \ldots, g_k}$ is hyperbolic and is the fundamental group of a compact, non-positively curved cube complex.
 \end{cor}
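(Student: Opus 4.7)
The plan is to obtain \Cref{Cor:H2Action} as a direct numerical specialization of \Cref{Thm:OtherSample}, taking $\widetilde{X}$ to be the CAT(0) square complex dual to the right-angled pentagonal tiling of $\widetilde{S} \cong \HH^2$ and taking $\Upsilon = \HH^2$. The first step is to verify the hypotheses of \Cref{Thm:OtherSample}: the surface group $G = \pi_1 S$ acts geometrically on $\widetilde{X}$ (by the standard dualization of a right-angled tiling into a square complex, whose cubical structure has a vertex per pentagon and an edge per adjacent pair of pentagons), and $G$ acts geometrically on $\HH^2$ because $S$ is a closed hyperbolic surface; every nontrivial element of $G$ is a hyperbolic isometry of $\HH^2$ and so stabilizes a unique geodesic axis; and the $G$--equivariant map sending a vertex of $\widetilde{X}$ to the center of the corresponding pentagon extends to a $\lambda$--quasi-isometry with $\lambda \approx 1.5627$ by \Cref{Prop:PentagonalQI}.

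The second step is to identify the two growth exponents $a$ and $b$. For $b$, I would invoke Huber's theorem \cite{Huber}, generalized by Margulis \cite{Margulis:Growth}, which gives $b = 1$ for any cocompact Fuchsian group acting on $\HH^2$. For $a$, the key structural observation is that $\widetilde{X}$ is a two-dimensional CAT(0) square complex dual to a planar tiling, so each hyperplane is a bi-infinite combinatorial line that projects to a closed geodesic in $S$; its stabilizer in $G$ is therefore infinite cyclic. Any cyclic subgroup has exponential growth exponent $0$ with respect to the induced $\HH^2$--metric (and with respect to any proper metric), so $a = 0$.

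The third and last step is arithmetic. Substituting $a = 0$, $b = 1$, and $\lambda \approx 1.5627$ into
\[
c \: < \: \min\left\{ \frac{b-a}{20\lambda}, \ \frac{b}{40\lambda + 1} \right\},
\]
the second term dominates because $40\lambda + 1 > 20\lambda$, and it evaluates to approximately $1/63.508$. In particular any $c \leq 1/63.51$ satisfies the hypothesis, so applying \Cref{Thm:OtherSample} yields the desired conclusion.

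I do not expect any substantive obstacle in this argument: the corollary is a direct specialization of \Cref{Thm:OtherSample} once the ingredients are assembled. The only nontrivial input beyond the main theorem is the sharp quasi-isometry constant $\lambda$ relating the combinatorial square metric on $\widetilde{X}$ to the hyperbolic metric on $\HH^2$, and this is exactly what \Cref{Prop:PentagonalQI} is designed to supply; the remaining ingredients (cocompact action, cyclic hyperplane stabilizers, Huber's growth theorem) are standard.
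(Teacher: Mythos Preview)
Your proposal is correct and follows essentially the same argument as the paper: the corollary is derived directly from \Cref{Thm:OtherSample} by taking $\widetilde X$ to be the dual square complex, $\Upsilon = \HH^2$, observing that hyperplane stabilizers are cyclic so $a=0$, invoking Huber's theorem for $b=1$, and plugging in $\lambda \approx 1.5627$ from \Cref{Prop:PentagonalQI}. Your arithmetic and verification of the hypotheses match the paper's reasoning exactly.
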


In particular, \Cref{Cor:H2Action} says that quotients of surface groups, with words sampled from a pentagonal hyperbolic metric, are cubulated and hyperbolic at density $\frac{1}{64}$. This is somewhat worse than sampling with respect to a cubical metric, where we obtain the same conclusion at density $\frac{1}{41}$; see the discussion after \Cref{Cor:LowDensity}. The multiplicative gap between these densities is essentially the constant $\lambda$.

Studying the effect of quasiisometry constants on density statements about quotients of $G$ has led us to conjecture that there exist cubulations of a hyperbolic manifold group with quasiisometry constant $\lambda$ arbitrarily close to $1$. Equivalently, the points of $\mathscr D(G)$ corresponding to cocompact hyperbolic structures are limit points of metrics coming from cocompact cubulations.
See \Cref{Conj:OptimalCubulateManifold} and the subsequent discussion. This conjecture has been recently proved by Brody and Reyes~\cite{BrodyReyes:HyperbolicCubulations}.

\subsection{Overview} 
\Cref{sec:growth} reviews several results about the growth of a hyperbolic group, including the existence of a growth exponent and the statement that infinite-index quasiconvex subgroups have a lower growth exponent than the ambient group.
\Cref{sec:cubical presentation background} reviews the definitions of cubical small-cancellation theory and proves \Cref{Thm:C'20Proper}, a non-probabilistic cubulation criterion for $C'(\frac{1}{20})$ small-cancellation quotients of a cubulated group. This criterion is of independent interest, and has already been used in the work of Jankiewicz and Wise~\cite{JankiewiczWise}.

The probabilistic arguments supporting the proof of \Cref{Thm:main} are contained in \Cref{Sec:Pieces}. In that section, we control the sizes of pieces in a generic cubical presentation and show that below a certain density, a cubical presentation is $C'(\frac{1}{20})$, hence the quotient group is hyperbolic and cubulated. All of the geometric arguments in that section are coarse in nature, and it becomes natural to work with a certain generalization of pieces called \emph{loose pieces}. The study of loose pieces also permits a translation between a $G$--action on a cube complex $\widetilde X$ and a $G$--action on a more general metric space $\Upsilon$. We undertake this translation in \Cref{Sec:OtherMetric}, where we prove \Cref{Thm:OtherSample}. Finally, in \Cref{Sec:Pentagonal}, we find the optimal multiplicative constants in a quasiisometry between a pentagonal tiling of $\HH^2$ and the dual cube complex, proving \Cref{Prop:PentagonalQI} and \Cref{Cor:H2Action}.

 In \Cref{Sec:Problems}, we collect some problems and questions motivated by these results.

\subsection{Acklowledgements}
We thank Eduardo Reyes and Sam Taylor for several enlightening conversations. We thank the referee for numerous minor corrections, and for suggesting a potential improvement to our results, as sketched in \Cref{Rem:ProbabilisticQITranslation} and \Cref{Ques:UpsilonTypicalPieces}.

\section{Growth}\label{sec:growth}

This section collects several definitions and results about the growth of hyperbolic groups. None of the results recalled here are original.

\begin{defn}[Translation lengths]\label{Def:TransLength}
Let $G$ be a group acting properly and cocompactly on a metric space $\Upsilon$, and let $g \in G$ be an infinite-order element. The \emph{translation length} of $g$ is defined to be $|g|_{{}_\Upsilon} = \inf \{ \dist(x, gx) : x \in \Upsilon \}$. The \emph{stable translation length} of $g$ is
\begin{equation*}
\stabletrans{g}_{{}_\Upsilon}  = \lim_{n \to \infty} \frac{\dist(x, g^n x)}{n}, 
\end{equation*}
for an arbitrary $x \in \Upsilon$. It is a standard property of isometries of metric spaces that the limit exists and is independent of $x$ \cite[page 230]{BridsonHaefliger}. 
\end{defn}

Observe that each of $|g|_{{}_\Upsilon}$ and $\stabletrans{g}_{{}_\Upsilon}$ only depends on the conjugacy class $[g]$.
By triangle inequalities, $\stabletrans{g}_{{}_\Upsilon}   \leq  |g|_{{}_\Upsilon}$ for every $g$. In addition, when $\Upsilon$ is hyperbolic, there is a constant $C$ such that the following holds for every infinite-order $g \in G$:
\[
\stabletrans{g}_{{}_\Upsilon}  \: \leq \: |g|_{{}_\Upsilon} \: \leq \: \stabletrans{g}_{{}_\Upsilon} + C.
\]
Finally, if $g$ stabilizes a geodesic axis in $\Upsilon$ (as will typically be the case in our applications), we have $\stabletrans{g}_{{}_\Upsilon}   =  |g|_{{}_\Upsilon}$.

\begin{defn}[Growth]\label{Def:Growth}
Let $G$ be a finitely generated group acting properly and cocompactly on a metric space $\Upsilon$.
Fix a basepoint $x \in \Upsilon$ and a subset $H \subset G$.
The \emph{growth function} of $H$ with respect to $\Upsilon$ is the function $f_{H,\Upsilon}:\naturals \rightarrow \naturals$
defined by
\begin{equation*}
f_{H,\Upsilon}(n) = \# \left\{ h\in H: \dist_\Upsilon(x, \, hx)\leq n \right\} .
\end{equation*}

Since $G$ is a quotient of a finite-rank free group, and the action on $\Upsilon$ is proper, the growth function $f_{H,\Upsilon}$ is no larger than exponential. Thus it makes sense to consider the logarithm of $f$. The \emph{growth exponent} of $H$ with respect to $\Upsilon$ is
\begin{equation*}
\xi_H (\Upsilon) = \lim_{n \to \infty} \frac{ \log f_{H,\Upsilon}(n) }{n} ,
\end{equation*}
whenever the limit exists. We emphasize that the limit
depends a great deal on $\Upsilon$. However, triangle inequalities in $\Upsilon$ imply that $\xi_H(\Upsilon)$ is independent of the basepoint.  
\end{defn}

Many results in the literature are expressed in terms of the \emph{growth rate}  $\lambda_H(\Upsilon) = \lim \sqrt[n]{f_{H,\Upsilon}(n)} = e^{ \xi_H(\Upsilon) } $ instead of the growth exponent $\xi_H(\Upsilon)$. However, the two notions carry the same information.

The following result was proved by Dahmani and the authors~\cite[Thm 1.1]{DahmaniFuterWise}, and independently Matsuzaki, Yabuki, and Jaerisch \cite[Cor 2.8]{MYJ}.  See also \cite[Thm 1.3]{DahmaniFuterWise} for a statement that does not assume $G$ is hyperbolic, but does assume $\Upsilon$ is a CAT(0) cube complex and $H$ is a hyperplane stabilizer. 

\begin{thm}\label{Thm:SubgroupGrowth}
Let $G$ be a non-elementary hyperbolic group acting properly and cocompactly on a metric space $\Upsilon$.
Let $H$ be a quasiconvex subgroup of infinite index. Then  the growth exponents $\xi_H$ and $\xi_G$ exist, and
\[
\xi_H(\Upsilon) < \xi_G (\Upsilon).
\]
\end{thm}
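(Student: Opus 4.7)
I would split the proof into existence of the two growth exponents and the strict inequality between them. For existence, note that because $G$ acts cocompactly on $\Upsilon$, a fundamental domain has finite diameter, so by triangle inequality $f_{G,\Upsilon}(m+n) \leq C\, f_{G,\Upsilon}(m)\, f_{G,\Upsilon}(n)$ for a uniform constant $C$; equivalently $\log\bigl(C\, f_{G,\Upsilon}(n)\bigr)$ is subadditive, and Fekete's lemma yields the limit defining $\xi_G(\Upsilon)$. The same argument, restricted to products of pairs of elements of $H$, gives existence of $\xi_H(\Upsilon)$.

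For the strict inequality my plan is a Schottky-style construction. Since $G$ is non-elementary hyperbolic and $H$ is quasiconvex of infinite index, the limit set $\Lambda H$ is a proper closed $H$-invariant subset of $\partial G$. Loxodromic fixed-point pairs are dense in $\partial G \times \partial G$ for non-elementary hyperbolic groups, so I would first pick a loxodromic $t \in G$ whose attracting and repelling fixed points both lie in $\partial G \smallsetminus \Lambda H$. Replacing $t$ by a sufficiently high power, I arrange that the quasi-axis of $t$ is so far from the quasiconvex hull of $H \cdot x$ that every word of the form
\[
w \;=\; h_0 \, t^{\epsilon_1} \, h_1 \, t^{\epsilon_2} \cdots t^{\epsilon_k}\, h_k, \qquad h_i \in H, \ \epsilon_i \in \{\pm 1\},
\]
labels a uniform local quasigeodesic in $\Upsilon$. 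By Morse's lemma for hyperbolic spaces, distinct normal forms then represent distinct group elements, with $\dist_\Upsilon(x, wx) \approx \sum_i \dist_\Upsilon(x, h_i x) + k\,|t|_\Upsilon$ up to a uniform additive error.

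The counting step is then routine: the number of admissible words with total length at most $\ell$ is at least $\sum_k 2^k f_{H,\Upsilon}(\ell - k\,|t|_\Upsilon)$, and optimizing $k$ proportional to $\ell$ beats $f_{H,\Upsilon}(\ell)$ by a factor exponential in $\ell$, yielding $\xi_G(\Upsilon) \geq \xi_H(\Upsilon) + \delta$ for some $\delta > 0$ depending on $|t|_\Upsilon$. The main obstacle will be the ping-pong verification that different normal forms give different elements of $G$: this demands calibrating the power of $t$ so that the Morse lemma applies uniformly along every alternating transition between an $h_i$-segment and a $t^{\epsilon_j}$-segment, and is the genuinely hyperbolic-geometric heart of the argument. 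Once this is secured, existence of $\xi_H$ feeds directly into the count and delivers the strict inequality.
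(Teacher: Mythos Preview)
First, note that the paper does not actually prove this theorem: it is quoted with attribution to \cite{DahmaniFuterWise} and \cite{MYJ}, so there is no in-paper argument to compare against. Your Schottky/free-product strategy is one of the standard routes to such a result, and the ping-pong portion of your sketch is reasonable. The counting step, however, has a real gap.

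The displayed lower bound $\sum_k 2^k f_{H,\Upsilon}(\ell - k|t|_\Upsilon)$ does not match the normal form $h_0 t^{\epsilon_1} h_1 \cdots t^{\epsilon_k} h_k$: that form carries a $(k{+}1)$--tuple of elements of $H$, not a single one, and if you force $h_1=\cdots=h_k=1$ to recover a single $f_H$ factor then the sign choices collapse and you do not get $2^k$ distinct elements. More importantly, even after the combinatorics are repaired, the assertion that ``optimizing $k$ proportional to $\ell$'' yields an exponential gain over $f_{H,\Upsilon}(\ell)$ is precisely the nontrivial content of the theorem. What makes it work is that the Poincar\'e series $\sum_{h\in H} e^{-s|h|_\Upsilon}$ \emph{diverges} at $s=\xi_H(\Upsilon)$; then for $s$ slightly above $\xi_H$ one still has $\tilde P_H(s)\,\tilde P_{\langle t\rangle}(s)>1$, the alternating-word series for $H*\langle t\rangle$ diverges, and $\xi_G\ge\xi_{H*\langle t\rangle}>\xi_H$. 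If $H$ were of convergent type this step would fail---indeed one can then have $\xi_{H*\langle t\rangle}=\xi_H$ when $|t|$ is large, which is exactly the regime your ping-pong requires. Divergence does hold here, because $H$ acts geometrically on a quasiconvex subspace of $\Upsilon$ and Coornaert's purely-exponential estimate $f_{H,\Upsilon}(n)\asymp e^{\xi_H n}$ applies; but this is a substantive ingredient your sketch omits, and it (or an equivalent input) is where the heart of the proof lies.
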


The following theorem combines two results of Coornaert and Knieper \cite{Coornaert93, CoornaertKnieper2002}. Recall that a non-trivial element $g \in G$ is \emph{primitive} if $g \neq h^n$ for any $n > 1$. Primitive conjugacy classes are defined similarly.

\begin{thm}\label{Thm:ConjugacyGrowth}
Let $G$ be a nonelementary group acting properly and cocompactly on a $\delta$--hyperbolic metric space $\Upsilon$. Then, there exist positive constants $A,B, b, n_0$, where $b = \xi_G(\Upsilon)$, such that the following hold for $n \geq n_0$.
\begin{enumerate}[\:\: $(1)$]
\item\label{Itm:TotalGrowth} The total number $f_{G,\Upsilon}(n)$ of elements that translate a basepoint $x \in \Upsilon$ by distance at most $n$ satisfies
\[
A e^{bn} \leq f_{G,\Upsilon}(n) \leq B e^{bn} .
\]
\item\label{Itm:ConjugacyGrowth}
The number $p_n$ of primitive conjugacy classes of translation length
 at most $n$ satisfies
\[
A \frac{e^{bn}}{n} \leq p_n \leq B e^{bn} .
\]
\end{enumerate}
\end{thm}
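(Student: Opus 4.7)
The plan is to derive this theorem by combining the two cited results of Coornaert \cite{Coornaert93} and Coornaert--Knieper \cite{CoornaertKnieper2002}, taking $b = \xi_G(\Upsilon)$ throughout and absorbing the constants from each into a single pair $A,B$ (and a single threshold $n_0$) by passing to maxima and minima. Thus the work reduces to separately invoking part (1) and part (2); there is no new geometric content to produce, only some bookkeeping to align constants.

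For part (1), I would first identify $b$ with the critical exponent of the Poincar\'e series $\sum_{g \in G} e^{-s\, \dist(x,gx)}$, which one verifies coincides with $\xi_G(\Upsilon)$. The two-sided bound $A e^{bn} \leq f_{G,\Upsilon}(n) \leq B e^{bn}$ is then Coornaert's theorem. Its proof builds a Patterson--Sullivan conformal measure of dimension $b$ on the Gromov boundary of $\Upsilon$; using the Shadow Lemma---which is available because $G$ acts properly and cocompactly on a $\delta$--hyperbolic space---one transfers mass estimates on the boundary into the required orbit counting estimates in $\Upsilon$.

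The upper bound in part (2) is a formal consequence of part (1). Cocompactness of the $G$--action bounds the distance from $x$ to any $G$--orbit by $K = \diameter(\Upsilon / G)$, and hyperbolicity produces a quasi-axis for every infinite-order $g \in G$. For each primitive conjugacy class $[g]$ with $|g|_{{}_\Upsilon} \leq n$ we may therefore choose a representative $g'$ whose quasi-axis passes within $K$ of $x$, forcing $\dist(x, g' x) \leq n + C$ for a uniform constant $C = C(\delta, K)$. Hence $p_n \leq f_{G,\Upsilon}(n+C) \leq B' e^{bn}$.

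The lower bound is the substantial content of Coornaert--Knieper and is analogous to the prime geodesic theorem in negative curvature. The strategy is to count, for each primitive conjugacy class $[g]$ with $\stabletrans{g}_{{}_\Upsilon} = m$, the contribution of its powers and conjugates to $f_{G,\Upsilon}(n+C)$: the powers contribute at most $\lfloor n/m \rfloor$ distinct exponents, while the relevant conjugates of each $g^k$ are, by hyperbolicity and cocompactness, parameterized up to bounded error by cosets of the centralizer $\langle g \rangle$ meeting a bounded neighborhood of $x$. Summing these per-class contributions and comparing to $f_{G,\Upsilon}(n+C) \geq A e^{bn}$ from part (1) yields $p_n \geq A' e^{bn}/n$. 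The main obstacle is precisely this last step: carefully disentangling primitivity from powers while keeping the conjugator count sharp requires the full dynamical machinery that Coornaert--Knieper develop, and is the reason the $\tfrac{1}{n}$ factor appears in the lower bound but not in the upper bound.
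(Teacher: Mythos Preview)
Your proposal is correct and takes essentially the same approach as the paper: the paper's proof simply cites Coornaert \cite[Thm~7.12]{Coornaert93} for part~(1) and Coornaert--Knieper \cite[Thm~1.1]{CoornaertKnieper2002} for part~(2), with no further argument. Your additional sketches of the Patterson--Sullivan and prime-geodesic-theorem machinery behind those citations are accurate and go beyond what the paper provides, but the logical structure is the same.
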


\begin{proof}
Conclusion \eqref{Itm:TotalGrowth} is due to Coornaert \cite[Thm 7.12]{Coornaert93}. Conclusion \eqref{Itm:ConjugacyGrowth} is due to Coornaert and Knieper \cite[Thm 1.1]{CoornaertKnieper2002}.
\end{proof}

\begin{rem}\label{Rem:GroupoidGrowth}
One consequence of \Cref{Thm:ConjugacyGrowth}.\eqref{Itm:TotalGrowth} is that when $\Upsilon$ is a cell complex, we get the same upper and lower bounds (with a modified upper constant $B$) on the number of vertices in a metric ball in $\Upsilon$, where the vertices being counted are not required to be in the $G$--orbit of the basepoint $x$. This holds because of the cocompactness of the $G$--action.
\end{rem}

\begin{rem}\label{Rem:NonPrimitive}
The upper bound on the number $p_n$ of primitive classes in \Cref{Thm:ConjugacyGrowth}.\eqref{Itm:ConjugacyGrowth} implies that the number of
\emph{non-primitive} conjugacy classes of translation length at most $n$ is bounded above by
\[
\sum_{j = 1}^{n/2} B e^{bj} < n B e^{bn/2} \ll p_n.
\]
Combining this fact with the lower bound of \Cref{Thm:ConjugacyGrowth}.\eqref{Itm:ConjugacyGrowth}  implies that the proportion of non-primitive conjugacy classes is at most
\[
\frac{ n^2 B}{A} e^{-bn/2} .
\]
Hence, for large $n$, the non-primitive conjugacy classes form a vanishingly small proportion of all conjugacy classes up to length $n$.
\end{rem}

\section{Cubical presentations and small-cancellation theory}\label{sec:cubical presentation background}

This section reviews some definitions and results about cubical small-cancellation theory. Our primary reference is Wise~\cite[Chapters 3--5]{WiseIsraelHierarchy}. We also prove \Cref{Thm:C'20Proper}, a properness criterion that follows from \cite[Thm 5.44]{WiseIsraelHierarchy} but is easier to apply.

\subsection{Cubical presentations and pieces}

\begin{defn}[Cubical presentation]\label{Def:CubicalPres}
A \emph{cubical presentation} $X^* = \langle X | Y_1, \ldots, Y_m \rangle$ consists of non-positively curved cube complexes $X$ and $Y_i$, and a set of local isometries $Y_i \looparrowright X$.  The cubical presentation $X^*$ corresponds to a topological space, also denoted $X^*$, consisting of $X$ with a cone on each $Y_i$. Accordingly, we call each $Y_i$ a \emph{cone} of $X^*$.
\end{defn}

See \Cref{Fig:CubicalPres} for an example. In this paper, it will always be the case that $\pi_1 Y_i \cong \integers$. However, this assumption is not present elsewhere in the literature.

For a hyperplane $\widetilde{U}$ of $\widetilde{X}$, the \emph{carrier} $N(\widetilde{U})$ is the union of all closed cubes intersecting $\widetilde{U}$.

The \emph{systole} $\systole{X}$ is the infimal length of an essential combinatorial closed path in $X$.
In terms of \Cref{Def:TransLength}, $\systole{X}$ is the smallest translation length of a non-trivial element of $\pi_1 X$ acting on $\widetilde X$.

\begin{defn}[Pieces]\label{Def:Piece}
Let $X^* = \langle X | Y_1, \ldots, Y_m \rangle$ be a cubical presentation. 
A \emph{cone-piece} of $X^*$ between $Y_i$ and $Y_j$ is a component of $\widetilde{Y}_i \cap \widetilde{Y}_j$, for some choice of lifts of $\widetilde Y_i$ and $\widetilde Y_j$ to $\widetilde X$, excluding the case where $\widetilde Y_i = \widetilde Y_j$.

A \emph{wall-piece} of $X^*$ in $Y_i$ is a non-empty intersection of $\widetilde{Y}_i \cap N(\widetilde{U})$, where $\widetilde{U}$ is a hyperplane that is disjoint from  $\widetilde{Y}_i$.

Given a constant $\alpha > 0$, we say that $X^*$ satisfies the $C'(\alpha)$ \emph{small-cancellation condition} if for every cone-piece or wall-piece $P$ involving $Y_i$, we have $\diameter (P) < \alpha \systole{ Y_i }$. 
\end{defn}

When pieces are small, the topological space $X^*$ satisfies a number of pleasant properties. 

\begin{lem}[\hbox{\cite[Thm 3.32 and Thm 4.1]{WiseIsraelHierarchy}}]\label{Lem:1/12 lift}
If $X^*$ is $C'(\frac{1}{12})$, then every cone $Y_i \looparrowright X^*$ lifts to an embedding
  $Y_i \hookrightarrow \widetilde{X^*}$.
\end{lem}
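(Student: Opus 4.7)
The plan is to argue by contradiction using the standard minimal disk-diagram machinery of cubical small-cancellation (see \cite[Ch.~3]{WiseIsraelHierarchy}). A failure of $Y_i$ to embed in $\widetilde{X^*}$ would create a short null-homotopic loop lying inside a single cone, and the $C'(\tfrac{1}{12})$ hypothesis, combined with the cubical Greendlinger Lemma, would then force a contradiction with minimality.

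The coning of $Y_i$ kills $\pi_1 Y_i \cong \integers$ in $\pi_1 X^*$, so the composition $Y_i \looparrowright X \hookrightarrow X^*$ admits a lift $\tilde\phi\colon Y_i \to \widetilde{X^*}$. Assuming $\tilde\phi$ is not injective, I would fix distinct points $p, q \in Y_i$ with $\tilde\phi(p) = \tilde\phi(q)$, together with a path $\alpha$ from $p$ to $q$ in $Y_i$ chosen to minimize the combinatorial length of $\tilde\phi(\alpha)$ among such witnesses. Since $\widetilde{X^*}$ is simply connected, the loop $\tilde\phi(\alpha)$ bounds a reduced disk diagram $D \to \widetilde{X^*}$, which I would take to be minimal in the lex order on $(\#\text{cone cells}, \#\text{squares})$.

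If $D$ has no cone cells, it is a square disk diagram with $\partial D \subset \tilde\phi(Y_i)$. Using the standard fact that a local isometry of non-positively curved cube complexes lifts to a convex isometric embedding of universal covers, $D$ pulls back to a square diagram in $\widetilde Y_i$ exhibiting a null-homotopy of a lift of $\alpha$; since $\widetilde Y_i$ is CAT(0), this forces the two endpoints to coincide, so $p = q$ in $Y_i$, contradicting our choice. Otherwise $D$ contains a cone cell, and Greendlinger's Lemma for $C'(\tfrac{1}{12})$ cubical presentations produces a \emph{shell}: a cone cell $C$ on some cone $Y_j$ whose boundary $\partial C$ decomposes as an outer boundary arc lying in $\partial D$, of length strictly exceeding $\tfrac{1}{2}\systole{Y_j}$, together with a uniformly bounded number of pieces. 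Since $\partial D$ sits entirely in the lifted $Y_i$, this outer arc is contained in the intersection $\widetilde Y_j \cap \widetilde Y_i$. When the lifts $\widetilde Y_j$ and $\widetilde Y_i$ are distinct, that intersection is a cone-piece of diameter $<\tfrac{1}{12}\systole{Y_j}$, too small to contain an arc of the required length --- a direct numerical contradiction.

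The main obstacle is the delicate case $\widetilde Y_j = \widetilde Y_i$, in which the shell and the boundary lie in the same lifted cone and the cone-piece condition is vacuous. The approach here is an \emph{absorption} argument: within $\widetilde Y_i$, replace the outer-arc subpath of $\alpha$ by the complementary arc of $\partial C$, yielding a new witness path $\alpha'$ from $p$ to $q$ in $Y_i$ whose lift bounds the diagram $D'$ obtained from $D$ by deleting $C$. Then either $\tilde\phi(\alpha')$ is strictly shorter than $\tilde\phi(\alpha)$, contradicting the choice of $\alpha$, or $D'$ has strictly fewer cone cells than $D$, contradicting the minimality of $D$. Both cases are thereby impossible, so $\tilde\phi$ must be injective and $Y_i$ embeds in $\widetilde{X^*}$.
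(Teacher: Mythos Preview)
The paper does not supply its own proof of this lemma; it is simply quoted as \cite[Thm~4.1]{WiseIsraelHierarchy}. Your outline is the standard disk-diagram strategy and is essentially the route taken in Wise's original argument: minimal diagram, square case via convexity of $\widetilde Y_i\hookrightarrow\widetilde X$, shell case via the piece bound, and absorption when the shell sits in the same lifted cone.

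There is, however, a genuine logical gap in the absorption step. You first minimize $|\tilde\phi(\alpha)|$ over all witnesses, and then choose $D$ minimal among diagrams for that fixed $\alpha$. After absorbing the shell $C$ you obtain a pair $(\alpha',D')$ in which $D'$ bounds $\tilde\phi(\alpha')$, a \emph{different} boundary path. Consequently, the fact that $D'$ has fewer cone cells than $D$ contradicts nothing: the minimality of $D$ was asserted only among diagrams for $\tilde\phi(\alpha)$. Your sentence ``either $|\tilde\phi(\alpha')|<|\tilde\phi(\alpha)|$ \ldots\ or $D'$ has strictly fewer cone cells'' is therefore not a dichotomy that yields a contradiction in both branches. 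Nor have you shown that the first branch must hold: that would require the inner arc of the shell to be strictly shorter than the outer arc, which in turn needs a precise bound on the number of inner pieces, and ``a uniformly bounded number'' is not enough. The fix is to minimize jointly --- choose $(\alpha,D)$ lexicographically minimal in $(\#\text{cone cells of }D,\ \#\text{squares of }D)$ over \emph{all} witness paths $\alpha$ --- so that absorption strictly drops the first entry.

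A smaller imprecision occurs in the distinct-lift case: you compare the \emph{length} of the outer arc to the \emph{diameter} bound on a cone-piece, but the outer arc need not be a geodesic in $\widetilde Y_j$. The clean contradiction is that the outer arc lies in a single cone-piece, so $\partial C$ is homotopic in $Y_j$ to a concatenation of a small number of piece-geodesics whose total length is below $\systole{Y_j}$; hence $\partial C$ is null-homotopic in $Y_j$, contradicting reducedness of $D$.
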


In a generalization of ordinary small-cancellation theory, small pieces guarantee the persistence of hypebolicity in $X^*$:

\begin{lem}[\hbox{\cite[Lem 3.70 and Thm 4.7]{WiseIsraelHierarchy}}]\label{lem:1/14 hyperbolic}
If $\pi_1 X$ is hyperbolic and $X^*$ is compact and $C'(\frac{1}{14})$, then $\pi_1X^*$ is hyperbolic.
\end{lem}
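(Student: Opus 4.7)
The plan is to establish a linear isoperimetric inequality for $\pi_1 X^*$, which by standard results implies hyperbolicity. The relevant notion of area is the combined count of $2$-cubes and cone-cells in a disc diagram $D \to X^*$, where a cone-cell is a $2$-disc whose boundary circle maps into some cone $Y_i$. I would first argue, by a minimization/reduction procedure, that any null-homotopic loop in $X^*$ is bounded by a \emph{reduced} disc diagram: the square part is a reduced square diagram in $X$ (no self-intersecting dual curves), and no two adjacent cone-cells map into the same translate $\widetilde Y_i$ in $\widetilde X^*$, which would allow them to be absorbed into one.

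Second, I would invoke the $C'(\tfrac{1}{14})$ hypothesis to control the internal arcs of the diagram. An internal arc along which two cone-cells meet projects to a cone-piece, and an internal arc along which a cone-cell meets the carrier of a hyperplane in the square part projects to a wall-piece; in both cases its length is less than $\tfrac{1}{14}\systole{Y_i}$. From here I would carry out a combinatorial Gauss--Bonnet argument. Assign angles at the corners of $D$ so that square corners in the interior receive angle $\pi/2$, and corners of each cone-cell $C$ receive angles whose deficit reflects its complementary boundary. The $C'(\tfrac{1}{14})$ condition ensures that any internal cone-cell that meets the rest of $D$ along at most a bounded number of pieces contributes strictly negative curvature, because its boundary length exceeds $14$ times the maximal piece size.

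Third, since the total curvature of a disc is $2\pi$, some cone-cell $C$ must be a \emph{shell}: a large fraction (more than $\tfrac{12}{14}$) of $\boundary C$ lies on $\boundary D$. Removing $C$ and iterating yields a recursive procedure in which each cone-cell removal shortens the boundary by an amount proportional to $\systole{Y_i}$. Interleaving this with the linear isoperimetric inequality for $\pi_1 X$ (valid because $X$ is compact and $\pi_1 X$ is hyperbolic) to fill in the residual square diagrams shows that the total area of $D$ is linear in $|\boundary D|$. Compactness of $X^*$ bounds the systoles of the finitely many cones from below, so constants are uniform.

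The main obstacle is the Gauss--Bonnet / Greendlinger step: one must set up the angle assignment so that both cone-pieces and wall-pieces are accounted for simultaneously, and check that the $\tfrac{1}{14}$ constant suffices for the curvature bookkeeping to give strict negativity at every non-shell cone-cell. A subtle point is that a hyperplane carrier in the square part can abut several cone-cells, so one must argue that these interactions are controlled by wall-pieces rather than being free to accumulate. Once the shell lemma is in place, the passage from linear isoperimetry to hyperbolicity of $\pi_1 X^*$ is standard.
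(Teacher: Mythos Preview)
The paper does not supply its own proof of this lemma: it is quoted directly from \cite[Lem~3.70 and Thm~4.7]{WiseIsraelHierarchy}, and the authors invoke it as a black box. So there is no paper-proof to compare against beyond the cited source.

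Your outline is essentially the approach of the cited reference. Wise's argument does proceed via reduced disc diagrams in $X^*$, a combinatorial Gauss--Bonnet estimate assigning curvature to cone-cells and square regions, and a Greendlinger-type shell lemma that yields a linear isoperimetric inequality; Lem~3.70 there supplies the bound on piece-paths and Thm~4.7 is the resulting hyperbolicity. A few points where your sketch is looser than what is actually needed: the notion of \emph{reduced} in this setting is more elaborate than you indicate (beyond combining adjacent cone-cells with the same $\widetilde Y_i$, one must also absorb squares into cone-cells and replace cone-cells by square subdiagrams when possible, and handle \emph{cornsquares} and \emph{spurs}); and the structural dichotomy one actually proves is that a minimal-complexity reduced diagram is either a single cell, a \emph{ladder}, or has at least three shells/corners of positive curvature along $\partial D$. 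The constant $\tfrac{1}{14}$ enters because an internal cone-cell can abut up to seven pieces on each side in the ladder analysis. Your acknowledged obstacle about hyperplanes abutting several cone-cells is exactly the issue handled by treating \emph{rectangles} between cone-cells as wall-pieces in the curvature count. With those refinements your plan matches the source.
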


The main result of this section is the following theorem, which  guarantees that $\pi_1 X^*$ acts properly on a CAT(0) cube complex.

\begin{thm}\label{Thm:C'20Proper}
Let $X^* = \langle X \mid Y_1, \ldots, Y_k \rangle$ be a $C'(\frac{1}{20})$ cubical presentation. Suppose that $X$ is compact, and every $Y_i$ is compact and deformation retracts to a closed--geodesic. 
In addition, suppose that for every hyperplane  $U \subset Y_i$,  the carrier $N(U)$ is embedded,  the complement $Y_i \setminus U$ is contractible, and  $\diameter (N(U)) < \frac{1}{20} \systole{Y_i}$.
 
Then  
$\pi_1 X^*$ acts properly and cocompactly on a CAT(0) cube complex dual to a wallspace structure on $\widetilde{X^*}$ satisfying the $B(8)$ condition.

Moreover, if $\overline g \in \pi_1 X^* \setminus \{1\}$ stabilizes a cell of the dual cube complex, then $\overline g$ is the image of an element $g \in \pi_1 X$
such that a conjugate of some $\pi_1 Y_i$ lies in $\langle g \rangle$. In particular, if each $\pi_1 Y_i$ is maximal cyclic, then  $\pi_1X^*$ acts freely on $\widetilde{X^*}$.
\end{thm}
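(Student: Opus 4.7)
The plan is to apply Wise's main cubulation theorem for cubical presentations \cite[Thm~5.44]{WiseIsraelHierarchy}, which asserts that a cubical presentation equipped with a wallspace structure on $\widetilde{X^*}$ satisfying the $B(8)$ condition yields a proper and cocompact action of $\pi_1 X^*$ on the dual CAT(0) cube complex. The bulk of the work is to construct the wallspace and then verify $B(8)$ from the $C'(\frac{1}{20})$ hypothesis together with the hypotheses on the hyperplane carriers $N(U) \subset Y_i$.

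First I will set up the walls. Each cone $Y_i$ deformation retracts to a combinatorial closed geodesic, and for each hyperplane $U \subset Y_i$ the complement $Y_i \setminus U$ is contractible, so $U$ separates $Y_i$ into two halfspaces and determines a bona fide wall of $\widetilde Y_i$. The walls of $\widetilde{X^*}$ are then obtained by the standard wall-cone construction: each hyperplane of $\widetilde X$ is glued together with the matching walls inside each $\widetilde Y_i$ whose carrier it meets, producing a connected wall of $\widetilde{X^*}$, while hyperplanes of some $\widetilde Y_i$ disjoint from every other cone produce stand-alone walls.

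The technical core is the verification of $B(8)$. Each $B(8)$ axiom rules out a short closed configuration of cone-pieces, wall-pieces, and subpaths of hyperplane carriers lying inside a single cone $Y_i$. The $C'(\frac{1}{20})$ hypothesis bounds every cone-piece and wall-piece by $\frac{1}{20}\systole{Y_i}$, and the hypothesis $\diameter(N(U)) < \frac{1}{20}\systole{Y_i}$ supplies the same bound for any traversal of a carrier. Concatenating at most eight such subpaths produces a candidate closed essential path in $Y_i$ of total length strictly less than $\systole{Y_i}$, contradicting the definition of the systole. I expect this to be the main obstacle: $B(8)$ is really a family of eight slightly different axioms, and each must be unpacked and fitted into the same counting template, which is routine but delicate, particularly in distinguishing wall-pieces that enter $Y_i$ through a carrier from those that cross $Y_i$ transversely.

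Once $B(8)$ is established, \cite[Thm~5.44]{WiseIsraelHierarchy} gives the proper cocompact action on the dual CAT(0) cube complex, with cocompactness coming from the compactness of $X$ and of each $Y_i$. For the ``moreover'' statement, suppose $\overline g \in \pi_1 X^* \setminus \{1\}$ stabilizes a cell of the dual cube complex; then $\overline g$ preserves each wall through that cell. Walls coming purely from hyperplanes of $\widetilde X$ have trivial stabilizer, since $\pi_1 X$ acts freely on $\widetilde X$, so $\overline g$ must preserve a wall containing a contribution from some cone $\widetilde Y_i$, and therefore must setwise preserve $\widetilde Y_i$ itself. The setwise stabilizer of $\widetilde Y_i$ in $\pi_1 X$ is the maximal cyclic subgroup containing a conjugate of $\pi_1 Y_i$; thus $\overline g$ is the image of some $g \in \pi_1 X$ with $\langle g \rangle$ containing a conjugate of $\pi_1 Y_i$. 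When each $\pi_1 Y_i$ is maximal cyclic this forces $g \in \pi_1 Y_i$, so $\overline g = 1$ in $\pi_1 X^*$, contradicting our assumption; hence the action is free.
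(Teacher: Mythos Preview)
Your wallspace construction has a fundamental error that breaks the argument. You write that since $Y_i\setminus U$ is contractible, ``$U$ separates $Y_i$ into two halfspaces.'' But a contractible complement is in particular \emph{connected}: a single hyperplane dual to one edge of the core closed--geodesic $w_i$ cuts the circle $w_i$ at one point and leaves $Y_i$ connected. Thus your proposed walls violate condition~(\ref{B6:wallspace}) of the $B(6)$ definition, which requires each wall to determine genuine halfspaces $\overleftarrow U,\overrightarrow U$ of $Y_i$. This is exactly why the paper does something different: after subdividing so that $w_i$ has even length, it declares each wall of $Y_i$ to consist of the \emph{pair} of hyperplanes dual to antipodal edges of $w_i$. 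A pair of antipodal cuts does separate the quasi-circle into two arcs, and this pairing is what makes the whole wallspace machinery go through.

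There is a second gap: you treat \cite[Thm~5.44]{WiseIsraelHierarchy} as if $B(8)$ alone yields a proper cocompact action. In the form the paper uses (its Theorem~\ref{Thm:cubulating X star}), the input is not just $B(8)$ but also pieceful convexity of hyperplanes and a separation condition~(\ref{Itm:Separation}) requiring, for certain geodesics $\kappa$ in $Y_i$, a wall separating the endpoints and not $2$--proximate to either. The paper spends most of its proof verifying this separation condition via careful distance estimates between antipodal hyperplanes (this is where $\frac1{20}$ is actually used, via inequalities like $8\alpha\systole{Y}<(\frac12-2\alpha)\systole{Y}$). Even then the conclusion is only that cell stabilizers are \emph{torsion}; properness and the ``moreover'' clause come from a separate appeal to \cite[Thm~4.2 and Rem~4.3]{WiseIsraelHierarchy}, which places every torsion element of $\pi_1X^*$ inside $\Aut(Y_i\to X)$ for some $i$. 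Your argument for the ``moreover'' --- that a stabilizer must preserve a wall meeting some cone and hence preserve $\widetilde Y_i$ --- does not work as stated, since preserving a wall in $\widetilde{X^*}$ is a statement about $\pi_1X^*$, not $\pi_1X$, and there is no direct reason such an element lifts to the stabilizer of a lift of $Y_i$ in $\widetilde X$.
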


\Cref{Thm:C'20Proper} will be proved with the aid of \cite[Thm 5.44 and Cor 5.45]{WiseIsraelHierarchy}. Setting up the proof using those results requires a number of auxiliary definitions, beginning with the $B(8)$ condition.

\subsection{Wallspace small-cancellation conditions}
We will construct a wallspace structure for $\widetilde{X^*}$. To do so, we will define a wallspace on each $Y_i$, whose walls are equivalence classes of hyperplanes in $Y_i$. This will generate a wallspace structure
for $\widetilde{X^*}$ whose walls are equivalence classes of hyperplanes generated by the equivalence relation
 fostered by the wallspace structures on the lifts $Y_i\hookrightarrow \widetilde{X^*}$.

\begin{figure}
\includegraphics[width=5in]{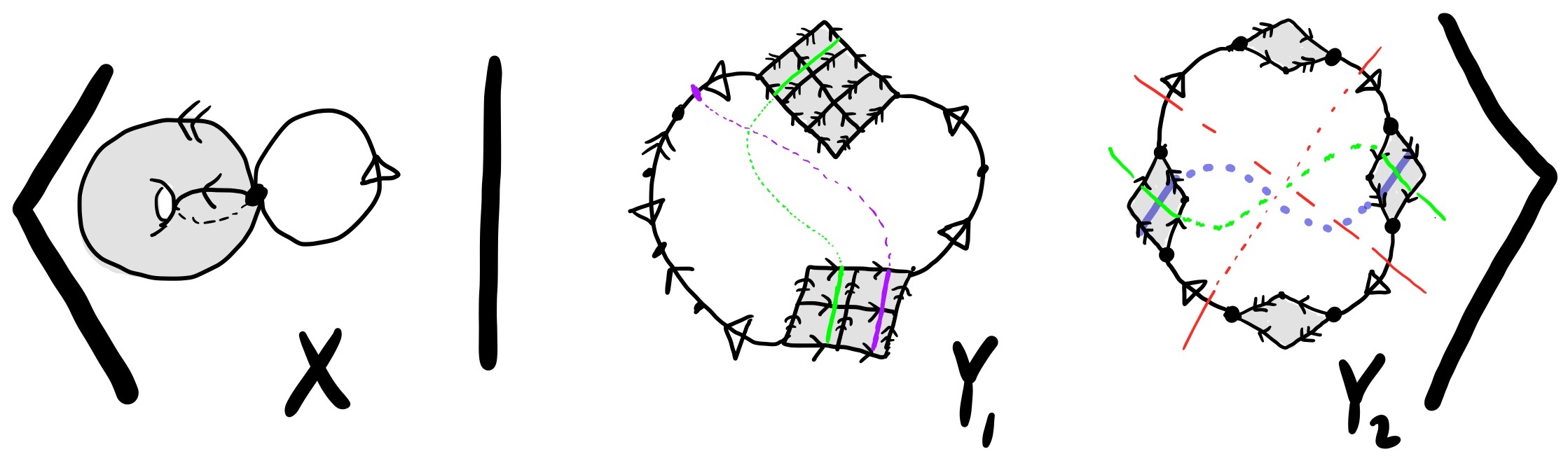}
\caption{A cubical presentation illustrating conditions \eqref{B6:wallspace} and \eqref{B6:aut} of \Cref{Def:B6}. In this example, as in \Cref{Thm:C'20Proper}, we have $\pi_1 Y_i \cong \ZZ$ for each $Y_i$, and the wallspace structure on $Y_i$ has two diametrically opposed hyperplanes in each wall. Unlike the setting of \Cref{Thm:C'20Proper}, $\pi_1 X$ is not hyperbolic in this example.}
\label{Fig:CubicalPres}
\end{figure}

\begin{defn}[$B(6)$ and $B(8)$ conditions]\label{Def:B6}
Suppose that $X^*= \langle X \mid Y_1, \ldots, Y_k \rangle$ is a $C'(\frac{1}{12})$ cubical presentation. A \emph{$B(6)$ structure on $X^*$} consists of a wallspace structure on each $Y_i$, which is preserved by its automorphisms, and satisfies a certain small-cancellation condition (see \Cref{Fig:CubicalPres} for an example). More precisely:

\begin{enumerate}[\:\: ($1$)]
\item \label{B6:wallspace} The collection of hyperplanes of each $Y_i$ is partitioned into classes satisfying the following conditions.
No two hyperplanes in the same class cross or osculate; in particular, the carrier of each hyperplane $u_j$ embeds. 
The union $U = \cup u_j$ of the hyperplanes in a class is called a \emph{wall}.
Furthermore, for each wall $U$, there are \emph{halfspaces} $\overleftarrow{U},\overrightarrow{U}$ such that $Y_i = \overleftarrow{U} \cup \overrightarrow{U}$ and $U = \overleftarrow{U} \cap \overrightarrow{U}$. 
\item \label{B6:aut} 
$\Aut(Y_i\rightarrow X)$ preserves the above \emph{wallspace structure} on $Y_i$.
\item \label{B6:homotopy} If $P$ is a path in $Y_i$ that is the concatenation of at most  7 piece-paths
 and $P$ starts and ends on the carrier
$N(U)$ of a wall $U$, then $P$ is path-homotopic into $N(U)$. 
\end{enumerate}

The \emph{$B(8)$ condition} is defined by replacing \eqref{B6:homotopy} with the stronger condition:

\begin{enumerate}[\:\: ($1'$)]
\setcounter{enumi}{2}
\item\label{Itm:B8} If $P$ is a path that is the concatenation of at most  8 piece-paths
 and $P$ starts and ends on the carrier
$N(U)$ of a wall then $P$ is path-homotopic into $N(U)$.
\end{enumerate}
\end{defn}

In the above, 
a \emph{piece-path} in $Y$ is a path in a piece of $Y$.
Meanwhile, $\Aut(Y_i\rightarrow X)$ is the group of automorphisms $\phi:Y_i\to Y_i$ such that  
\begin{tikzcd}[row sep = tiny, column sep = small]
Y_i \arrow[rr] \arrow[rd] & &Y_i \arrow[dl] \\
& X 
\end{tikzcd}
commutes.

\subsection{The properness criterion}

We now collect some terminology that will be needed to state and apply \Cref{Thm:cubulating X star}.

In our usage, geodesics are globally distance realizing.
By contrast, a \emph{closed--geodesic} $w \rightarrow Y$ in a non-positively curved cube complex is a combinatorial immersion
of a circle  whose universal cover $\widetilde w$  lifts to a combinatorial geodesic $\widetilde w\rightarrow \widetilde Y$
 in the universal cover of $Y$. 
 We emphasize that (the image of) a closed--geodesic is not a geodesic in $Y$, because it is not distance realizing.

Let $U$ be a hyperplane and $v$ a $0$--cube. We say that $U$ is \emph{$m$--proximate} to $v$ if there is a path $P=P_1\cdots P_m$
such that each $P_i$ is either a single edge or a path in a piece,
and $v$ is the initial vertex of $P_1$ and $U$ is dual to an edge in $P_m$. 
A wall is \emph{$m$--proximate} to $v$ if has a hyperplane that is $m$--proximate to $v$.

A hyperplane $u$ of a cone $Y$ of $X^*$ is \emph{piecefully convex} if the following holds:
For any path $\xi\rho \rightarrow Y$ with endpoints on $N(u)$,
if $\xi$ is a geodesic and $\rho$ is either trivial or lies in a piece of $Y$ containing an edge dual to $u$,
then $\xi\rho$ is path-homotopic in $Y$ to a path $\mu\rightarrow N(u)$.

We will verify pieceful convexity via the following criterion.

\begin{rem}[\hbox{\cite[Rem~5.43]{WiseIsraelHierarchy}}]\label{rem:pieceful convexity}
Let $M$ be the maximal diameter of any piece of $Y$ in $X^*$.
Then a hyperplane $u$ of  $Y$ is piecefully convex provided its carrier $N=N(u)$ satisfies:
$\dist_{\widetilde Y}(g\widetilde N,\widetilde N)>M$ 
for any translate $g\widetilde N\neq \widetilde N\subset \widetilde Y$.
\end{rem}

The following is a simplified restatement of \cite[Thm~5.44]{WiseIsraelHierarchy}, which also incorporates
 \cite[Lem~3.70 and Cor~5.45]{WiseIsraelHierarchy}.

\begin{thm}\label{Thm:cubulating X star}
Suppose that $X^* = \langle X \mid \{Y_i\} \rangle$ satisfies the following hypotheses:
\begin{enumerate}[\:\: $(1)$]
\item\label{Itm:B6} $X^*$ is $C'(\frac{1}{14})$ and satisfies the $B(6)$ condition.
\item\label{Itm:Pieceful} Each hyperplane of each cone $Y_i$ is piecefully convex.
\item\label{Itm:CutWall} For each $Y_i$, each infinite order element of $\Aut(Y_i)$ is cut by a wall of $Y_i$.
\item \label{Itm:Separation} Let $\kappa\rightarrow Y_i$ be a geodesic with endpoints $p,q$.
Let $u_1$ and $u_1'$ be distinct hyperplanes in the same wall of $Y_i$.
Suppose $\kappa$ traverses a $1$--cell dual to $u_1$, 
and either $u_1'$ is $1$--proximate to $q$ or $\kappa$ traverses a $1$--cell dual to $u_1'$.
  Then there is a  wall $U_2$ in $Y_i$ that separates $p,q$  but is not $2$--proximate to $p$ or $q$.
  \end{enumerate}
Then $\pi_1X^*$ acts with torsion stabilizers on the dual cube complex of the $B(6)$ structure.
\end{thm}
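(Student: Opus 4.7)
The statement is fundamentally about ruling out infinite-order cell stabilizers once the dual cube complex has been built from the $B(6)$ wallspace data. Given hypothesis \eqref{Itm:B6}, the standard $B(6)$ machinery already delivers a wallspace structure on $\widetilde{X^*}$: walls are the equivalence classes of hyperplanes of $\widetilde X$ generated by the wall relation on each lifted cone, and Sageev's construction produces a CAT(0) cube complex $\widetilde C$ on which $\pi_1 X^*$ acts. My task is therefore to show that no infinite-order $\bar g \in \pi_1 X^*$ fixes a cell of $\widetilde C$.

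I would argue by contradiction: if $\bar g$ fixes a cell $c$, then $\bar g$ crosses no wall dual to $c$, so the number of walls of $\widetilde{X^*}$ separating a vertex $v$ of $c$ from $\bar g^n v$ remains uniformly bounded in $n$. The plan is then to produce arbitrarily many separating walls, splitting into two cases. If $\bar g$ is not conjugate into any cone group $\pi_1 Y_i$, its axis in $\widetilde X$ crosses infinitely many distinct hyperplanes of $\widetilde X$; the pieceful convexity hypothesis \eqref{Itm:Pieceful}, combined with the diameter criterion of \Cref{rem:pieceful convexity}, ensures that such hyperplanes descend to distinct walls of $\widetilde{X^*}$ rather than being amalgamated by the cone equivalence relation, producing the required infinite family. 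If instead $\bar g$ is conjugate into some $\pi_1 Y_i$, then hypothesis \eqref{Itm:CutWall} first supplies a single wall of $Y_i$ separating the basepoint from its $\bar g$--translate. Hypothesis \eqref{Itm:Separation} is then applied iteratively along the axis of $\bar g$ acting on $\widetilde Y_i$ to produce a sequence of walls $U_1, U_2, \ldots$ separating $v, \bar g v, \bar g^2 v, \ldots$, whose carriers stay uniformly far from the separated vertices.

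The main obstacle, and the reason hypothesis \eqref{Itm:Separation} is stated with its $2$-proximity clause, is controlling how walls of a single cone $Y_i$ interact with cone-pieces and wall-pieces in $\widetilde{X^*}$. Two walls that are distinct in $Y_i$ could in principle be identified in $\widetilde{X^*}$ if their carriers are linked through a chain of pieces shared with other cones. The $C'(\tfrac{1}{14})$ bound together with the $B(6)$ homotopy axiom \eqref{B6:homotopy} prevents this kind of linking, but only provided each wall's carrier is kept uniformly far from the endpoints of the separating geodesic $\kappa$, which is precisely the content of \eqref{Itm:Separation}. I expect the bookkeeping needed to promote these per-cone separations to genuine separations in the global wallspace to be the most delicate step.

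Once these local-to-global translations are carried out, the argument is packaged using the framework of \cite[Thm~5.44]{WiseIsraelHierarchy}, of which the present statement is a simplified restatement tailored for the applications to \Cref{Thm:C'20Proper} in the next subsection; the four enumerated hypotheses then correspond precisely to the four roles played by the axioms in that proof: \eqref{Itm:B6} supplies the wallspace, \eqref{Itm:Pieceful} forces walls to track geodesics through cones, and \eqref{Itm:CutWall}--\eqref{Itm:Separation} together rule out infinite-order cell stabilizers.
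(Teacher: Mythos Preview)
The paper does not prove this theorem: it is stated as a simplified restatement of \cite[Thm~5.44]{WiseIsraelHierarchy}, incorporating \cite[Lem~3.70 and Cor~5.45]{WiseIsraelHierarchy}, and is used as a black box in the proof of \Cref{Thm:C'20Proper}. Your final paragraph recognizes this, so in that sense your proposal and the paper agree: the actual argument lives in the cited reference, and the theorem here is quoted, not proved.

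Your heuristic sketch preceding that acknowledgment is broadly in the right spirit but has a genuine conceptual slip. You split into the cases ``$\bar g$ is not conjugate into any cone group $\pi_1 Y_i$'' versus ``$\bar g$ is conjugate into some $\pi_1 Y_i$.'' But each $\pi_1 Y_i$ maps trivially to $\pi_1 X^*$ (the cones kill exactly these subgroups), so the second case is vacuous as phrased. The correct dichotomy is whether or not $\bar g$ stabilizes a lift $Y_i \hookrightarrow \widetilde{X^*}$, in which case $\bar g$ acts on that lift via an element of $\Aut(Y_i \to X)$; hypothesis~\eqref{Itm:CutWall} is about infinite-order elements of this automorphism group, not of $\pi_1 Y_i$. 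Indeed, in the application to \Cref{Thm:C'20Proper}, hypothesis~\eqref{Itm:CutWall} holds vacuously because each $Y_i$ is compact, so $\Aut(Y_i \to X)$ is finite. Also, your assignment of roles to the hypotheses is somewhat off: \eqref{Itm:Pieceful} and \eqref{Itm:Separation} are used in \cite{WiseIsraelHierarchy} primarily to establish that the walls of $\widetilde{X^*}$ genuinely embed and separate (the linear-separation and wall-convexity parts of the argument), rather than directly to rule out stabilizers; the torsion-stabilizer conclusion is then drawn from properness of the resulting action, with \eqref{Itm:CutWall} handling the residual cone-stabilizer case.
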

We refer to \Cref{fig:KappaAndFriends} for a depiction of the notation in Hypothesis~\eqref{Itm:Separation}. We will now use \Cref{Thm:cubulating X star} to prove \Cref{Thm:C'20Proper}.

\begin{figure}
\begin{overpic}[width=2.5in]{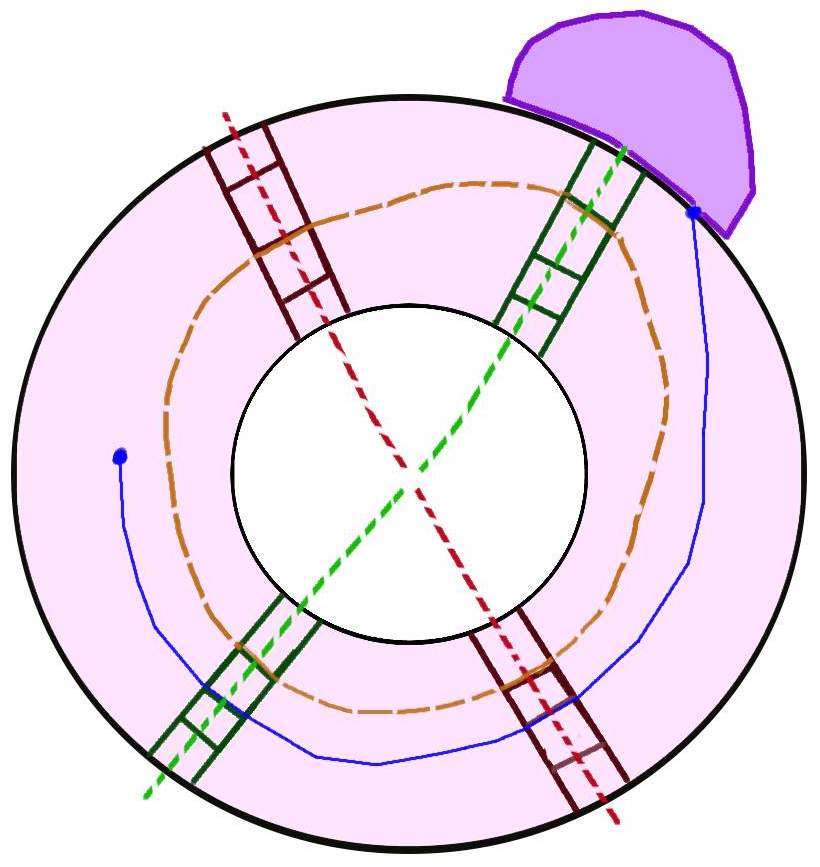}
\put(10,48){$p$}
\put(76,72){$q$}
\put(12,30){$\kappa$}
\put(17,62){$w$}
\put(46,70){$Y$}
\put(68,90){$Y_j$}
\put(72,1){$u_2$}
\put(24,90){$u_2'$}
\put(37,48){$U_2$}
\put(12,4){$u_1$}
\put(74,83){$u_1'$}
\put(55,48){$U_1$}
\end{overpic}
\caption{The scenario that arises in hypothesis~\eqref{Itm:Separation} of \Cref{Thm:cubulating X star}. For the geodesic $\kappa$, we have to verify that neither $p$ nor $q$ is $2$--proximate to $U_2 = u_2 \cup u_2'$.}
\label{fig:KappaAndFriends}
\end{figure}

\begin{proof}[Proof of \Cref{Thm:C'20Proper}]
Observe that \Cref{Def:Piece} of the $C'(\alpha)$ condition involves a strict inequality. Since $X^*$ is a finite $C'(\frac{1}{20})$ presentation, we may choose a constant $\alpha < \frac{1}{20}$ such that $X^*$ is $C'(\alpha)$ and such that $\diameter{N(u)} < \alpha \systole{Y_i}$ for every hyperplane of every $Y_i$.
Fix such an $\alpha$ for the remainder of the proof.

Observe that cubical subdivision preserves all the hypotheses of the theorem, and the $C'(\alpha)$ condition in particular.
We subdivide $X$ a number of times, while retaining the original metric. That is: every original edge of $X$ continues to have length $1$, while every edge of the $m^{\mathrm{th}}$ subdivision has length $2^{-m}$. The reason for the iterated subdivision is that the length of an edge contributes additive error to several calculations below, and a large value of $m$ will make this additive error negligible.
For instance, since $\alpha < \frac{1}{20}$ and $\systole{Y_i} \geq 1$, a large value of $m$ ensures that
\[
20 \alpha \systole{Y_i} < \systole{Y_i} - 2^{-m} \quad \text{for every }  i.
\]
Other inequalities with additive constants will follow similarly.

After the first subdivision of $X$, the hyperplanes of $X$ and of $Y_i$ become $2$--sided.
By hypothesis, there is a closed--geodesic $w_i$ that generates $\pi_1 Y_i$.
Moreover, we may assume that $\Aut(Y_i \to X)$ stabilizes $w_i$, for the following reason.
Since $Y_i$ is compact, $\widetilde{Y_i}$ is a quasi-line, hence $\stab(\widetilde{Y_i})$ is virtually cyclic.
Let $g$ be the generator of the maximal cyclic subgroup in $\stab(\widetilde{Y_i})$.
 Since we have subdivided $X$ at least once, $g$ stabilizes a geodesic axis $\widetilde{v_i} \subset \widetilde{Y_i}$, by \cite{HaglundSemiSimple}. Let $v_i = \langle g \rangle \backslash \widetilde{v_i}$. Now, we may choose $w_i  =  \langle g^n \rangle \backslash \widetilde{v_i}$, where $g^n$ generates $\pi_1(Y_i)$.

After the first subdivision,
the closed--geodesic $w_i \subset Y_i$ has an even number of edges.
The hypothesis that every hyperplane of $Y_i$ has contractible complement implies that every hyperplane intersects some edge of $w_i$.
 Our wallspace structure on each $Y_i$
will be defined by declaring each wall  to consist of the pair of hyperplanes corresponding to
a pair of antipodal edges of $w_i$. (See \Cref{Fig:CubicalPres} for a partial illustration of such a pairing, where the closed--geodesics $w_i$ are presumed to run along the inner boundary of each $Y_i$.)

For the rest of the proof, we focus on one relator $Y = Y_i$ and its closed--geodesic $w = w_i$. Before verifying the hypotheses of \Cref{Thm:cubulating X star}, we check an inequality that will be very useful in the sequel.

Let $u,u' \subset Y$ be a pair of hyperplanes in the same wall of $Y$. This means that $u,u'$ are dual to edges $\dot u, \dot u' \subset w$. Recall that $u$ has an embedded carrier such that $\diameter(N(u)) < \alpha \systole{Y}$, and similarly for $N(u')$.
Now, let $x \in N(u)$ and $x' \in N(u')$ be arbitrary. Then
\begin{align}
\dist(x,x') 
& \geq  \dist(\dot u, \dot u') - \dist(x, \dot u) - \dist(\dot u', x') \nonumber \\
& >  \left( \frac{1}{2} \systole{Y} - 2^{-m} \right) - 2 \alpha \systole{Y} \label{Eqn:FarHyperplanes} \\
& >  8 \alpha \systole{Y}, \label{Eqn:FarHyperplanesAlpha}
\end{align}
because $\alpha < \frac{1}{20}$ and $m$ is chosen so that $2^{-m}$ is tiny.

Next, we check the hypotheses of \Cref{Thm:cubulating X star}. In lieu of  the $B(6)$ condition of \Cref{Def:B6}, we will in fact verify the stronger $B(8)$ condition.

To check Condition~\eqref{B6:wallspace} of \Cref{Def:B6}, let $u,u'$ be distinct hyperplanes in the same wall of $Y$. Then each of $N(u)$ and $N(u')$ is embedded by hypothesis.
Furthemore, \Cref{Eqn:FarHyperplanesAlpha} implies $N(u) \cap N(u') = \emptyset$. Thus $u \cup u'$ partition $Y$ into two halfspaces.
Condition \eqref{B6:aut} of \Cref{Def:B6} holds because our (revised) choice of $w$ ensures that  $\Aut(Y \rightarrow X)$ stabilizes $w$, hence preserves the wallspace structure. 

For Condition~$(\ref{Itm:B8}')$, consider $P=P_1\cdots P_{8}$, a concatenation of $8$ piece-paths that starts on $N(u)$ and ends on either $N(u')$ or on $N(u)$.
    We may assume that each $P_i$ is a geodesic. If $P$ ends on $N(u')$, then the $C'(\alpha)$ hypothesis implies that $|P_i| < \alpha \systole{Y}$, hence $|P| < 8 \alpha \systole{Y}$, contradicting \Cref{Eqn:FarHyperplanesAlpha}. 

Now, consider the case where $P$ starts and ends on $N(u)$. Since we have already checked that $P$
 is too short to reach $N(u')$,
it follows that $P$ lifts to $\widetilde{Y} - (\pi_1 Y) \widetilde u'$,
  each of whose components is convex.
Thus $P$ is path-homotopic into $N(u)$. We have verified Condition~$(\ref{Itm:B8}')$ of \Cref{Def:B6}. Hence $X^*$ satisfies the $B(8)$ condition, and  hypothesis \ref{Thm:cubulating X star}.\eqref{Itm:B6}.

Hypothesis \ref{Thm:cubulating X star}.\eqref{Itm:Pieceful}, namely pieceful convexity of the hyperplanes of $Y$, follows from \Cref{rem:pieceful convexity}. Indeed, let $x \in N(\widetilde u)$ and $x' \in gN(\widetilde u)$ be points on the carriers of two distinct lifts of $u$ to $\widetilde Y$. Then, by the same calculation as in \Cref{Eqn:FarHyperplanes,Eqn:FarHyperplanesAlpha}, we have $\dist(x,x') > 8 \alpha \systole{Y}$, which exceeds the diameter of a piece.

Hypothesis~\ref{Thm:cubulating X star}.\eqref{Itm:CutWall} holds vacuously, since each $Y$ is compact.

We now verify Hypothesis~\ref{Thm:cubulating X star}.\eqref{Itm:Separation}.
Let $\kappa \to Y$ be a geodesic from $p$ to $q$, as in that condition. 
We begin by deriving upper and lower bounds on $|\kappa| = \dist(p,q)$. Let $\dot p, \dot q$ be vertices of $w$ that are closest to $p,q$, respectively. Since a hyperplane carrier containing $p$ cuts $w$ and has diameter less than $\alpha \systole{Y}$, we have $\dist(p, \dot p) \leq \alpha \systole{Y}$, and similarly $\dist(q, \dot q) \leq \alpha \systole{Y}$. Since $|w| = \systole{Y}$, we have $\dist(\dot p, \dot q) \leq \frac{1}{2} \systole{Y}$. Thus
\begin{align}
|\kappa| = \dist(p,q) 
& \leq \dist(p, \dot p) + \dist(\dot p, \dot q) + \dist(\dot q, q) \nonumber \\
&<  \frac{1}{2} \systole{Y} + 2 \alpha \systole{Y}. \label{Eqn:KappaUpper}
\end{align}
For the lower bound on $|\kappa|$, recall that there is a wall $U_1 = u_1 \cup u_1'$  such that $\kappa$ traverses a $1$--cell dual to $u_1$, and either $u_1'$ is $1$--proximate to $q$ or $\kappa$ traverses a $1$--cell dual to $u_1'$. See \Cref{fig:KappaAndFriends}. If $\kappa$ intersects $u_1'$, then \Cref{Eqn:FarHyperplanes} implies 
\[
|\kappa| \geq \dist(N(u_1) , N(u'_1)) >  \left( \frac{1}{2} \systole{Y} - 2^{-m} \right) - 2\alpha \systole{Y}.
\]
Otherwise, if $q$ is $1$--proximate to $u_1'$, then $\dist(q, u_1') < \alpha \systole{Y}$, hence we obtain the weaker conclusion 
\begin{align}
|\kappa| 
&>  \dist(N(u_1) , N(u'_1)) - \alpha \systole{Y} \nonumber \\
& >  \left( \frac{1}{2} \systole{Y} - 2^{-m} \right) - 3\alpha \systole{Y}. \label{Eqn:KappaLower}
\end{align}

Now, let $y$ be the midpoint of $\kappa$, and let $u_2$ be a hyperplane such that $y \in N(u_2)$.
Let $u_2'$ be the other hyperplane in the same wall $U_2$. We claim that neither $u_2$ nor $u_2'$ is $2$--proximate to $p$ or $q$. To see this, we will check that $\dist(\{p, q\}, u_2) > 2 \alpha \systole{Y}$, and similarly for $u_2'$. For any $x \in N(u_2)$, we have

\begin{align*}
\dist(\{p, q\}, x) & \geq \dist(\{p, q\}, y) - \dist(x,y) \\
& > \frac{1}{2} |\kappa| - \alpha \systole{Y} \\
& > \frac{1}{2} \left(   \frac{1}{2} \systole{Y} - 2^{-m}  - 3\alpha \systole{Y}    \right)  - \alpha \systole{Y}    \\
& = \frac{1}{4} \systole{Y} - \frac{5}{2} \alpha \systole{Y} -  2^{-(m+1)} \\
& > 2 \alpha \systole{Y}.
\end{align*}
Here, the first inequality is the triangle inequality, the second inequality is the diameter bound on $N(u_2)$, the third inequality is \Cref{Eqn:KappaLower}, and the final inequality follows
because $\alpha < \frac{1}{20}$ and $2^{-m}$ is tiny. Similarly, for any $x' \in N(u_2')$, we have

\begin{align*}
\dist(\{p, q\}, x') & \geq  \dist(x', y) - \dist(\{p, q\}, y)\\
& >  \left( \frac{1}{2} \systole{Y} - 2^{-m}  - 2 \alpha \systole{Y} \right) - \frac{1}{2} |\kappa| \\
& >  \left( \frac{1}{2} \systole{Y} - 2^{-m}  - 2 \alpha \systole{Y} \right) - \frac{1}{2} \left(  \frac{1}{2} \systole{Y} + 2 \alpha \systole{Y} \right) \\
& = \frac{1}{4} \systole{Y} - 3 \alpha \systole{Y} - 2^{-m} \\
& > 2 \alpha \systole{Y}.
\end{align*}
Here, the first inequality is the triangle inequality, the second inequality is \Cref{Eqn:FarHyperplanes}, the third inequality is \Cref{Eqn:KappaUpper}, and the final inequality holds
because $\alpha < \frac{1}{20}$ and $2^{-m}$ is tiny.
Thus neither $p$ nor $q$ can be $2$--proximate to $u_2$ or $u_2'$.
We have thus verified that \Cref{Thm:cubulating X star} applies, hence $\pi_1 X^*$ acts with torsion stabilizers
on the CAT(0) cube complex dual to the wallspace structure we have constructed. Since $X$ is compact, this action is cocompact.

Finally, we check the conclusion about cell stabilizers in $\pi_1 X^*$. 
We have already shown that cell stabilizers must be torsion.
By \cite[Thm 4.2 and Rem 4.3]{WiseIsraelHierarchy}, we know that when $\alpha < \frac{1}{20}$,
any torsion element in  $\pi_1X^*$ lies in $\Aut(Y_i \rightarrow X)$ for some lift $Y_i \hookrightarrow \widetilde{X^*}$. 
Recall that $\widetilde Y_i$ is quasi-isometric to a line.
If $\sigma \in \Aut(Y_i \rightarrow X)$ is non-trivial, let $\overline{Y_i} = \langle \sigma \rangle \backslash Y_i$. Then $\overline{Y_i} \rightarrow X$ is a local isometry, with $\pi_1(\overline{Y_i})$ an infinite cyclic subgroup properly containing $\pi_1 Y_i$.
In particular, if $\pi_1 Y_i$ is maximal cyclic, then we obtain a contradiction, hence $\pi_1 X^*$ acts freely.
\end{proof}

\section{Controlling pieces in generic quotients}
\label{Sec:Pieces}

This section contains the proof of the main theorem, \Cref{Thm:main}. In the proof, we need to control the sizes of pieces in a generic cubical presentation. Wall-pieces are controlled in \Cref{Prop:WallPieceControl} and cone-pieces are controlled in a sequence of lemmas, culminating in \Cref{Prop:ConePieceSystole}. 

Most of the work in this section occurs in the language of \emph{loose pieces}, which represent a coarsening of the overlaps defined in \Cref{Def:Piece}. We define loose pieces in \Cref{Def:LoosePiece}. In \Cref{Lem:LoosenUp}, we show that every wall-piece or cone-piece in the sense of \Cref{Def:Piece} gives rise to a corresponding loose piece.

Via loose pieces, the proofs of Propositions~\ref{Prop:WallPieceControl} and \ref{Prop:ConePieceSystole} readily generalize to the context of non-cubical hyperbolic metric spaces in \Cref{Sec:OtherMetric}.

\subsection{Loose pieces and convex hulls of quasi-axes}\label{Sec:Loose}

Let $\Upsilon$ be a metric space. For any subset $s \subset \Upsilon$ and $J > 0$, the notation $\neb_J(s)$ denotes the closed $J$--neighborhood of $s$.

\begin{defn}[Quasi-axes]\label{Def:QuasiAxis}
Let $\widetilde w, \widetilde w'$ be bi-infinite geodesics in a metric space  $\Upsilon$. We declare them to be equivalent, and write $\widetilde w \approx \widetilde w'$, whenever $\widetilde w \subset \neb_r(\widetilde w')$ for some $r \geq 0$. In the main case of interest, when $\Upsilon$ is $\delta$--hyperbolic,  we can use a uniform value $r = 2\delta$, and deduce that $\widetilde w \approx \widetilde w'$ if and only if $\bdy \widetilde w = \bdy \widetilde w'$.

Let $g$ be a hyperbolic isometry of $\Upsilon$.
A \emph{quasi-axis} for $g$ is a bi-infinite geodesic $\widetilde w \subset \Upsilon$ such that $g \widetilde w \approx \widetilde w$.
 When $\Upsilon$ is $\delta$--hyperbolic,
any geodesic connecting the fixed points of $g$ on $\bdy \Upsilon$
is a quasi-axis for $g$.
\end{defn}

\begin{lem}\label{Lem:HullUniformThickness}
Let $\widetilde{X}$ be a $\delta$--hyperbolic, finite dimensional CAT(0) cube complex. Then there is a uniform constant $K = K(\widetilde{X})$ with the following property. For every bi-infinite geodesic $\widetilde{w}$, 
the convex hull of $ \widetilde w $ satisfies
\[ 
   \hull \left(  \widetilde w  \right) \subset \neb_K(\widetilde{w} ) . 
\]
\end{lem}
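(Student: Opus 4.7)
The strategy is to show that every $0$--cube $x \in \hull(\widetilde w)$ lies within distance $O(\delta)$ of $\widetilde w$, where $\delta$ is the hyperbolicity constant of $\widetilde X$. First I would reduce to the case where $\widetilde w$ is a bi-infinite combinatorial geodesic in the $1$--skeleton of $\widetilde X$; this costs only a bounded additive constant, because the combinatorial and CAT(0) metrics on $\widetilde X$ are bi-Lipschitz equivalent with constant depending only on the dimension.

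Let $x \in \hull(\widetilde w)$ be a $0$--cube, let $y \in \widetilde w$ be a nearest $0$--cube, and write $d = \dist(x, y)$. Because $x$ lies in the combinatorial convex hull of $\widetilde w$, no hyperplane separates $x$ from all of $\widetilde w$. In particular, every hyperplane $U \in \mathrm{sep}(x, y)$ must also cross $\widetilde w$, for otherwise it would separate $x$ from $\widetilde w$.

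The plan is then to combine two facts. The first is the standard identity
\[
  \dist(x, z) \: = \: \dist(x, y) + \dist(y, z) - 2 \, \bigl| \mathrm{sep}(x, y) \cap \mathrm{sep}(y, z) \bigr|
\]
valid for any three $0$--cubes, which follows from $\mathrm{sep}(x, z) = \mathrm{sep}(x, y) \mathbin{\triangle} \mathrm{sep}(y, z)$. The second is the classical near-point projection inequality in a $\delta$--hyperbolic geodesic space: for every $z \in \widetilde w$,
\[
  \dist(x, z) \: \geq \: \dist(x, y) + \dist(y, z) - C\delta,
\]
with $C$ universal. Combining these yields $|\mathrm{sep}(x, y) \cap \mathrm{sep}(y, z)| \leq C\delta/2$. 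Since $\widetilde w$ is a combinatorial geodesic, $\mathrm{sep}(y, z)$ is exactly the set of hyperplanes crossing the sub-arc of $\widetilde w$ from $y$ to $z$. Letting $z$ tend to infinity along each end of $\widetilde w$, every hyperplane of $\widetilde X$ that crosses $\widetilde w$ eventually appears in $\mathrm{sep}(y, z)$, so at most $C\delta$ hyperplanes of $\mathrm{sep}(x, y)$ cross $\widetilde w$ at all. Since every one of them does, $d = |\mathrm{sep}(x, y)| \leq C\delta$, and $K = C\delta$ (plus a bounded additive error from the combinatorial approximation) suffices.

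The hard part will be the careful transfer of the near-point projection inequality between the CAT(0) metric on $\widetilde X$, in which $\delta$--hyperbolicity is typically expressed, and the combinatorial $\ell^1$ metric that makes the hyperplane count clean. The dimension of $\widetilde X$ enters the final constant $K$ only through this bi-Lipschitz comparison.
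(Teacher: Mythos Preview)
Your argument is correct and takes a genuinely different route from the paper's proof.

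The paper argues in the CAT(0) metric: given a point $p$ far from $\widetilde w$, it draws the CAT(0) geodesic $s$ from $p$ to its nearest point on $\widetilde w$ and cites an observation of Sageev--Wise that the first cube met by $s$ contains a hyperplane $H$ meeting $s$ at an angle bounded away from $0$ (the bound depending only on the dimension). If $H$ also met $\widetilde w$, one would obtain a CAT(0) triangle with a long base on $s$ and two angles bounded below, contradicting $\delta$--thinness. Hence $H$ separates $p$ from $\widetilde w$, so $p\notin\hull(\widetilde w)$.

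Your approach instead exploits the median structure of the $\ell^1$ metric: every hyperplane separating $x$ from its nearest point $y\in\widetilde w$ must cross $\widetilde w$, and the symmetric-difference identity together with the hyperbolic nearest-point projection inequality bounds how many such hyperplanes can lie in $\mathrm{sep}(y,z)$ for any $z\in\widetilde w$; sending $z$ to each end finishes. This avoids the angle argument and the external reference entirely, and makes the dependence $K=O(\delta)$ explicit (after the bi-Lipschitz transfer between the CAT(0) and combinatorial metrics, which is where the dimension enters). The paper's argument is shorter and more geometric; yours is more self-contained and gives a cleaner constant.
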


\begin{proof}
Consider a vertex $p \in \widetilde{X}$ that lies far from $\widetilde w$. Let $s$ be a shortest CAT(0) (non-combinatorial) geodesic segment from $p$ to $\widetilde w$, oriented outward from $p$. Sageev and Wise have observed  \cite[Remark~3.2]{SageevWiseCores} that the first cube met by $s$ contains a midcube of  a hyperplane $H$ that intersects $s$ at an angle bounded away from $0$. The angle bound depends only on the dimension of the cube, hence can be taken uniformly over $\widetilde X$.

We claim that when $s$ is sufficiently long, $H \cap \widetilde w = \emptyset$. In other words, $H$ separates $p$ from $\widetilde w$, hence $p \notin \hull(\widetilde w)$. The claim holds because an intersection $H \cap \widetilde w$ would determine a CAT(0) geodesic triangle with a long base along $s$ and two large angles along $s$. Such a triangle cannot be $\delta$--thin.
%
\end{proof}

The following construction relates cubical presentations to quasi-axes.   
Given $G = \pi_1 X$ and  $g \in G - \{1\}$, choose any 
 quasi-axis $\widetilde w$ for $g$. Then, construct  a convex subcomplex $\widetilde{Y} = \hull (  \widetilde{w} ) \subset \widetilde{X}$,
whose quotient $Y =  \langle g \rangle \backslash \widetilde Y$ admits a local isometry into $X$.
By \Cref{Lem:HullUniformThickness},  $\widetilde Y$ lies in a uniform neighborhood of $\widetilde w$, hence its quotient $Y$ is compact. 
    We say that $Y$ is \emph{a quasi-circle defined by $g$}.

If  $g$ stabilizes an axis $\widetilde w$, as will be the case following \Cref{Conv:NiceX}, then the quasi-circle $Y$ deformation retracts to the closed--geodesic $w = \langle g \rangle \backslash \widetilde w$.

Now, we adopt the following convention.

\begin{conv}\label{Conv:NiceX}
From now until the start of the proof of \Cref{Thm:main}, the symbol $X$ denotes a compact non-positively curved cube complex, such that $\widetilde X$ is $\delta$--hyperbolic for some $\delta$. We assume that $G = \pi_1 X$ is nonelementary.

We assume that $X$ has been subdivided at least once, so that every $g \neq 1$ stabilizes a geodesic axis in $\widetilde X$ \cite{HaglundSemiSimple}.

Finally, we assume that every hyperplane of $X$ is essential. This assumption is harmless, because every non-positively curved cube complex $X'$ contains a convex essential core, whose hyperplanes are essential. See, for instance, Caprace and Sageev \cite[Proposition 3.5]{CapraceSageev2011}.
\end{conv}

Our arguments in this section will employ the following, looser analogue of \Cref{Def:Piece}.

\begin{defn}[Loose pieces]\label{Def:LoosePiece}
Let $X$ be as in \Cref{Conv:NiceX}.
Fix a constant $J \in \naturals$. We define $J$--loose pieces first in $\widetilde X$, then in $X$.

Consider bi-infinite geodesics $\widetilde w, \widetilde w'$ that do not share an endpoint in $\bdy \widetilde X$. Then a \emph{$J$--loose cone-piece between $\widetilde w$ and $\widetilde w'$} is the maximal geodesic segment $s \subset \widetilde w$ whose endpoints are contained in $\neb_{J} (\widetilde w')$. 
The \emph{companion of $s$} is the maximal segment $s'  \subset \widetilde w'$ whose endpoints are at distance $J$ from the corresponding endpoints of $s$. 

Now, let $w, w'$ be closed--geodesics in $X$. If $w \neq w'$, a \emph{$J$--loose cone-piece} between $w$ and $w'$ is the projection of a $J$--loose cone-piece between preimages $\widetilde w$ and $\widetilde w'$. A \emph{$J$--loose cone-piece between $w$ and itself}
arises in the above scenario where  $\widetilde w' = h\widetilde w$ for some $h \notin \stab_G(\widetilde w)$. 

In a similar manner, a \emph{$J$--loose wall-piece in $\widetilde w$} is a maximal geodesic segment  $s \subset \widetilde{w}$ whose endpoints are contained in $\neb_J (N(\widetilde{U}))$ for a hyperplane $U$.
A \emph{$J$--loose wall-piece in a closed--geodesic $w$} is the projection of a $J$--loose wall-piece in $\widetilde w$.
\end{defn}

The next lemma shows that there is a uniform value $J = J(\widetilde X)$ such that every piece in a cubical presentation $X^*$ must correspond to a $J$--loose piece in $X$.

\begin{lem}\label{Lem:LoosenUp}
Consider a cubical presentation $X^* = \langle X \mid Y_1, \ldots, Y_m \rangle$. Suppose that $\widetilde X$ is hyperbolic and  that every $Y_i$ is a quasi-circle. 
Let  $\widetilde w_i$ be a bi-infinite geodesic in $\widetilde Y_i$. 
Then there is a constant $J = J(\widetilde{X})$ such that the following hold for every $d \geq 0$:
\begin{enumerate}[\:\: $(1)$]
\item Every diameter $ \geq d$ cone-piece of $X^*$ between $Y_i$ and $Y_j$ determines a $J$--loose cone-piece between $\widetilde w_i$ and $\widetilde w_j$, of diameter  $\geq d$. 
\item Every diameter $ \geq d$ wall-piece of $X^*$ in $Y_i$ determines a $J$--loose wall-piece in $\widetilde w_i$, also of diameter  $\geq d$.
\item\label{Itm:HyperplaneInY} Every diameter $\geq d$ intersection between $\widetilde Y_i$ and a hyperplane carrier $N(U)$ determines a $J$--loose wall-piece  in $\widetilde w_i$, of diameter  $\geq d$.
\end{enumerate}
Furthermore, we may take $J \leq 3K$, where $K$ is the constant of  \Cref{Lem:HullUniformThickness}.
\end{lem}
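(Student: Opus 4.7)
The plan is to convert each of the three types of piece into a subsegment of the quasi-axis $\widetilde w_i$ by closest-point projection, then absorb the projection error into the looseness constant. Throughout, let $K = K(\widetilde X)$ denote the constant from \Cref{Lem:HullUniformThickness}, so that $\widetilde Y_i \subset \neb_K(\widetilde w_i)$ for each $i$. I will take $J := 3K$.

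For case (1), given a cone-piece $P$ of diameter $\geq d$ between $\widetilde Y_i$ and $\widetilde Y_j$, I would choose points $p, q \in P$ nearly realizing the diameter and let $\tilde p, \tilde q \in \widetilde w_i$ be their closest-point projections. These projections move by at most $K$ because $P \subset \neb_K(\widetilde w_i)$, and they lie in $\neb_{2K}(\widetilde w_j)$ because $P \subset \neb_K(\widetilde w_j)$ as well. The first observation gives $\dist(\tilde p, \tilde q) \geq d - 2K$ by the triangle inequality. The second, combined with an extension of $[\tilde p, \tilde q]$ by $K$ along $\widetilde w_i$ in each direction, produces a genuine subsegment of $\widetilde w_i$ of length $\geq d$ whose endpoints lie in $\neb_{3K}(\widetilde w_j) = \neb_J(\widetilde w_j)$. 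Since the $J$-loose cone-piece is by definition the maximal such segment, its diameter is at least $d$. The self-cone-piece case is handled identically, with $\widetilde w_j$ replaced by $h\widetilde w_i$ for $h \notin \stab_G(\widetilde w_i)$.

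Cases (2) and (3) run verbatim, with $\widetilde w_j$ replaced by the convex hyperplane carrier $N(\widetilde U)$. Here the projection bound is even cleaner: since each $p \in P$ already lies in $N(\widetilde U)$, we get $\dist(\tilde p, N(\widetilde U)) \leq K$ directly. The same extension-by-$K$ trick yields a subsegment of $\widetilde w_i$ of length $\geq d$ with endpoints in $\neb_{2K}(N(\widetilde U)) \subset \neb_J(N(\widetilde U))$, which sits inside the maximal segment defining the $J$-loose wall-piece. Whether or not $\widetilde U$ meets $\widetilde Y_i$ (distinguishing (2) from (3)) plays no role in the projection step.

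The main subtlety — and the reason I take $J$ strictly larger than $2K$ — is that closest-point projection onto $\widetilde w_i$ can contract the distance between $p$ and $q$ by as much as $2K$, so the projected segment by itself is only guaranteed to have length $d - 2K$. To recover the full diameter $d$, one extends by $K$ on each side along $\widetilde w_i$; this is legitimate because $\widetilde w_i$ is bi-infinite, and it increases the distance from the new endpoints to the auxiliary object ($\widetilde w_j$ or $N(\widetilde U)$) by at most $K$, which the choice $J = 3K$ is designed to absorb. I expect this bookkeeping to be the only delicate point of the proof; the rest is a sequence of triangle inequalities together with the fact that closed convex subsets of the CAT(0) space $\widetilde X$ admit well-defined, distance-nonincreasing nearest-point projections.
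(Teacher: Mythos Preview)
Your proposal is correct and is essentially the paper's own proof: project $p,q$ from the piece onto $\widetilde w_i$ (losing at most $K$ each), apply the triangle inequality to get a segment of length $\geq d-2K$, then extend by $K$ on each side along $\widetilde w_i$ to restore length $\geq d$ with endpoints in $\neb_{3K}(\widetilde w_j)$ (resp.\ $\neb_{2K}(N(\widetilde U))$). The paper likewise notes that $J=2K$ suffices for the wall-piece cases, matching your observation.
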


\begin{proof}
First, consider cone-pieces.
Let $p,q \in \widetilde Y_i \cap \widetilde Y_j$ be points such that $\dist(p,q) \geq d$. 
By \Cref{Lem:HullUniformThickness}, there are points $p_i, q_i \in \widetilde w_i$ 
that are $K$--close to $p$ and $q$, respectively. Applying \Cref{Lem:HullUniformThickness} to the quasi-circle $Y_j$ shows that $p_i, q_i$ are $2K$--close to $\widetilde w_j$.
Since $\widetilde w_i$ is a geodesic, we have $\dist(p_i, q_i) \geq d - 2K$. Extending the segment $[p_i, q_i]$ 
by $K$ in each direction, we obtain a geodesic segment $[p_i', q_i'] \subset \widetilde w_i$ whose length is at least $d$ and whose endpoints lie in $\neb_{3K}(\widetilde w_j)$. Thus $[p_i, q_i]$ is contained in a $3K$--loose cone-piece between $\widetilde w_i$ and $\widetilde w_j$.

The proof for wall-pieces and intersections with hyperplanes is identical. In this case, we may take $J = 2K$.
\end{proof}

\begin{rem}\label{Rem:MoreGeneralHyperplane}
Besides the looseness constant $J$, there is one important respect in which \Cref{Def:LoosePiece} is more general than \Cref{Def:Piece}. Whereas the definition of a wall-piece in $Y_i$ requires the hyperplane $U$ to be disjoint from $Y_i$, the definition of a $J$--loose wall-piece in $\widetilde w_i$ has no analogous restriction. Consequently, the $J$--loose wall-pieces in \Cref{Lem:LoosenUp}.\eqref{Itm:HyperplaneInY} need not come from wall-pieces. When we apply \Cref{Thm:C'20Proper}, we will care about \emph{all} intersections between $Y_i$ and hyperplane carriers in $X$, and all such intersections will be controlled using $J$--loose wall-pieces.
\end{rem}

\subsection{Wall-pieces}\label{Sec:WallPiece}

Our next goal is to give a criterion on the growth of the number of relators ensuring that
with overwhelming probability, the diameter of wall-pieces in a cubical presentation is bounded by $\alpha \systole{Y_i}$. We formulate the criterion in terms of loose pieces, for greater applicability in \Cref{Thm:C'20Proper} and in the generalized setting of \Cref{Sec:OtherMetric}.

Recall that by \Cref{Conv:NiceX}, we are assuming that $G$ is nonelementary and the hyperplanes of $X$ are essential. By \Cref{Thm:ConjugacyGrowth}, the growth function of $G$ acting on $\widetilde X$ is bounded by constants times $e^{b\ell}$, where $b > 0$ is the growth exponent.
Let $a$ the largest growth exponent among the carriers of the finitely many
orbits of hyperplanes of $\widetilde X$. 
Then $a$ is also the largest growth exponent of any of the $J$--neighborhoods of the hyperplanes, for an arbitrary $J$.
Recall that $a < b$ by \Cref{Thm:SubgroupGrowth}.

For each $\ell > 0$, let $\mathcal{G}(\ell)$ denote the set of nontrivial conjugacy classes in $\pi_1 X$ of length at most $\ell$. For the duration of this section, we will be sampling randomly from $\mathcal G(\ell)$, for large $\ell$.

\begin{prop}\label{Prop:WallPieceControl}
Let $X$ be as in \Cref{Conv:NiceX}.
 For each non-trivial conjugacy class  $\pi_1 X$, choose a 
representative closed--geodesic. 
Let $a < b$ be growth exponents as above, let $\alpha \in (0,1)$, and fix a positive number $c < \alpha(b-a)$. Then, for every $J \in \naturals$, there is a constant $C=C(J,X)$, with the following property.

For $k \leq e^{c \ell}$, choose conjugacy classes $[g_1], \ldots, [g_k] \in \mathcal{G}(\ell)$ uniformly at random. Let $w_i$ be the closed--geodesic representing $[g_i]$.
Then, for all $\ell \gg 0$,  the probability that some $w_i$ contains a $J$--loose wall-piece of diameter at least $\alpha \systole{w_i}$ is bounded above by
 \[
C \ell^2 e^{ (c+(a -b) \alpha) \ell  } .
 \]
 Consequently, with overwhelming probability as $\ell \to \infty$, all $J$--loose wall-pieces in $w_i$ have diameter less than $\alpha |g_i|$.
\end{prop}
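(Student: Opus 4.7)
The plan is a counting argument followed by a union bound over the $k$ independently sampled conjugacy classes. Fix $\epsilon > 0$ small enough that $c < \alpha(b - a - \epsilon)$; this is possible because the hypothesis $c < \alpha(b-a)$ is strict.

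First, I would parametrize the \emph{bad} conjugacy classes---those of some translation length $\ell' \leq \ell$ whose chosen closed--geodesic contains a $J$--loose wall-piece of diameter $d \geq \alpha \ell'$---as $G$--orbits of tuples $(\widetilde U, p, q, g)$. Here $\widetilde U$ is a hyperplane of $\widetilde X$, the vertices $p, q \in \neb_J(N(\widetilde U))$ are the endpoints of the lifted wall-piece on an axis $\widetilde w$ of $g$, and $g \in G$ has axis $\widetilde w$ with $|g| = \ell'$ and $\dist(p,q) = d$. Since each bad class produces at least one such tuple (up to the $G$--action), counting $G$--orbits of tuples yields an upper bound on the number of bad classes.

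Second, I would count orbits of tuples factor by factor. There are finitely many $G$--orbits of hyperplanes of $\widetilde X$. Modulo $\stab(\widetilde U)$ there are boundedly many (say $C_1 = C_1(J,X)$) choices for $p \in \neb_J(N(\widetilde U))$, since $\stab(\widetilde U)$ acts cocompactly on that set. Given $p$, the number of $q \in \neb_J(N(\widetilde U))$ with $\dist(p,q) \in [d, d+1]$ is at most $B_2 e^{(a+\epsilon)d}$, by the growth exponent $a$ of the $J$--neighborhood of a hyperplane carrier combined with \Cref{Rem:GroupoidGrowth}. Given $(p,q)$, any admissible $g$ is determined by $gp$, which must lie on the geodesic continuation of $[p,q]$ past $q$ at distance $\ell' - d$ from $q$; by \Cref{Thm:ConjugacyGrowth}.\eqref{Itm:TotalGrowth} the number of $G$--orbit points within that distance of $q$ is at most $B_3 e^{b(\ell'-d)}$. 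Multiplying, the exponent $(a+\epsilon)d + b(\ell'-d) = b\ell' - (b - a - \epsilon) d$ strictly decreases in $d$, so the geometric series over $d \in [\lceil \alpha \ell' \rceil, \ell']$ and then over $\ell' \leq \ell$ both converge, bounding the total count of bad classes by $C\, e^{(b - (b-a-\epsilon)\alpha)\,\ell}$.

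Third, by \Cref{Thm:ConjugacyGrowth}.\eqref{Itm:ConjugacyGrowth} together with \Cref{Rem:NonPrimitive}, one has $|\mathcal{G}(\ell)| \geq A\, e^{b\ell}/\ell$ for large $\ell$. Hence the probability that a single uniform sample in $\mathcal{G}(\ell)$ is bad is at most $(C\ell/A)\, e^{-(b-a-\epsilon)\alpha\,\ell}$, and a union bound over $k \leq e^{c\ell}$ samples gives probability at most $C' \ell^2 e^{(c + (a-b)\alpha)\ell}$, after absorbing the factor $e^{-\epsilon\alpha\ell}$ into a polynomial $\ell^2$ slack using the strict inequality $c < \alpha(b-a)$. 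Since $c + (a-b)\alpha < 0$, this bound tends to $0$, giving the overwhelming-probability conclusion. The main obstacle is the bookkeeping of Step~2: I must verify that every bad conjugacy class contributes a tuple within the chosen orbit-representative system (no undercounting) while each factor count is free of hidden exponential multiplicities. Secondary issues---non-primitive conjugacy classes (negligible by \Cref{Rem:NonPrimitive}), non-uniqueness of bi-infinite extension of $[p,q]$ in a cube complex (absorbed into constants), and the $\epsilon$--slack in passing from growth exponent to concrete growth-function estimates---are handled by the polynomial cushion $\ell^2$ and the strict inequality in the hypothesis.
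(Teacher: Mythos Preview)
Your approach is essentially the paper's: both arguments count bad conjugacy classes by factoring a closed--geodesic as a wall--piece segment contributing growth $\sim e^{ad}$ times a complementary segment contributing $\sim e^{b(\ell'-d)}$, then sum geometrically and apply a union bound. The paper phrases this as a cyclic decomposition $w = s \cdot y$ with $s \subset \neb_J(N(\widetilde U))$ and $y$ an arbitrary path; your parametrization by $G$--orbits of tuples $(\widetilde U, p, q, g)$ is the same count in different clothing.

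There is one genuine slip. Your $\epsilon$--slack produces a bound with exponent $c + (a - b + \epsilon)\alpha$, i.e.\ an extra factor $e^{+\epsilon\alpha\ell}$, not $e^{-\epsilon\alpha\ell}$. An exponentially growing factor cannot be ``absorbed into polynomial $\ell^2$ slack,'' so your argument does not recover the exact exponent $c + (a-b)\alpha$ in the displayed bound. The paper avoids this by applying Coornaert's theorem (\Cref{Thm:ConjugacyGrowth}.\eqref{Itm:TotalGrowth} together with \Cref{Rem:GroupoidGrowth}) directly to the hyperplane carrier, obtaining $\leq B' e^{a|s|}$ with no $\epsilon$. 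If you do the same, you match the stated bound; with your $\epsilon$, you still get the ``overwhelming probability'' conclusion (since you chose $\epsilon$ so that $c + (a-b+\epsilon)\alpha < 0$), but not the precise quantitative estimate. Your other worries---non-uniqueness of geodesic continuation and non-primitive classes---are indeed harmless: the paper simply counts all endpoints of $y$ in a ball of the right radius, and non-primitives play no role here.
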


\begin{proof}
Let $\Pi(\alpha,\ell)$
 denote the proportion of representative closed--geodesics $w$ of length at most $\ell$ such that
 $w$ contains a $J$--loose wall piece of diameter at least $\alpha \systole{w}$. That is,  $\Pi(\alpha,\ell)$ denotes the proportion of all $w$ of length at most $\ell$ such that 
$ \widetilde w \cap  \neb_J (N(\widetilde{U})) $ contains the endpoints of a segment of length at least $\alpha \systole{w}$
for some hyperplane $U$. Our goal is to estimate $\Pi(\alpha, \ell)$.

For each $n \leq \ell$, let $\mathcal{U}(n,\alpha)$ be the set of (conjugacy representatives of) closed--geodesics $w$ of length  exactly $ n$
that contain a $J$--loose wall-piece $s$ with $|s| \geq \alpha n$.
Then
\[
\Pi(\alpha, \ell) = \frac{ \sum_{n=1}^\ell | \mathcal{U}(n,\alpha) |}{| \mathcal{G}(\ell)  |} .
\]

Each element  $w \in \mathcal{U}(n,\alpha)$ can be cyclically permuted to be of the form $s \cdot y$, where $s$ lies in 
$\neb_J (N(\widetilde{U}))$ for some hyperplane $\widetilde U$, and
 $|s| \geq \alpha n$.
By \Cref{Thm:ConjugacyGrowth}.\eqref{Itm:TotalGrowth} and \Cref{Rem:GroupoidGrowth}, the number of such paths $s$, up to homotopy in $\widetilde X$, is bounded above by $B'e^{a|s|}$, for some constant $B'$ depending on $J$. Similarly, the number of paths $y$ is bounded above by $B''e^{b|y|}$, for some constant $B''$. 

Let $B = B'B''$. Since there are at most $n$  distinct cyclic permutations of $w$,
the number of elements in $\mathcal{U}(n,\alpha)$ is
\begin{equation}\label{Eqn:WallPieceCyclicPerm}
| \mathcal{U}(n,\alpha) |
\leq n \cdot B e^{a |s| } e^{b |y|}
= B n \,  e^{a |s|  + b (|w|-|s|) }
\leq  B n \,
e^{(a \alpha + b(1-\alpha)) n} .
\end{equation}

Summing over all lengths up to $\ell$, we obtain:
\begin{align*}
\sum_{n=1}^\ell | \mathcal{U}(n,\alpha) |
& \leq \sum_{n=1}^\ell Bn  e^{(a \alpha + b(1-\alpha))n}  \\
& \leq B \ell \,  \sum_{n=1}^\ell \left( e^{a \alpha + b(1-\alpha)} \right)^n  \\
& < B \ell \,  \sum_{m=0}^\infty \left( e^{a \alpha + b(1-\alpha)} \right)^{\ell - m}  \\
& =  B  \ell \,  \frac{ e^{(a \alpha + b(1-\alpha)) \ell} }{1 - e^{-(a \alpha + b(1-\alpha))} }   \\
& =   e^{b \ell} \cdot  \ell e^{(a-b)\alpha \ell } \cdot  \frac{ B}{1 - e^{-(a \alpha + b(1-\alpha))} }   \\
& \leq   e^{b \ell} \cdot  \ell e^{(a-b)\alpha \ell } \cdot  \frac{ B}{1 - e^{-a  } }  .
\end{align*}

Meanwhile, \Cref{Thm:ConjugacyGrowth}.\eqref{Itm:ConjugacyGrowth} implies that $|\mathcal{G}(\ell) | \geq \frac{A}{\ell} e^{b \ell}$. Thus
\begin{equation}\label{eq:bigwallpiece propertion}
\Pi(\alpha, \ell) \leq \frac{ \sum | \mathcal{U}(n,\alpha) |}{| \mathcal{G}(\ell)  |}
\leq
\ell^2 e^{(a-b)\alpha \ell } \cdot \frac{B  } { A (1 - e^{-a } ) }.
\end{equation}

Among all choices $(w_1,w_2,\ldots, w_k) \in \mathcal{G}(\ell)^k$, the proportion of $k$--tuples where some $w_i \in \mathcal{U}(n,\alpha)$ is bounded above by
$k \Pi(\alpha,\ell)$.
Since $k  \leq  e^{c \ell}$,
Equation~\eqref{eq:bigwallpiece propertion}
says that the total probability of a large piece is bounded by
\[
k \Pi(\alpha,\ell) \leq \ell^2 e^{c \ell +(a-b)\alpha \ell } \cdot \frac{B  }  { A (1 - e^{-a } ) } 
\]
 Setting $C =  \frac{B  } { A (1 - e^{-a} ) }$ completes the proof of the estimate.
In particular, since $c+(a-b)\alpha < 0$ by hypothesis, we have $k \Pi(\alpha,\ell) \to 0$ as $\ell \to \infty$.
\end{proof}

\subsection{Cone-pieces between two closed--geodesics}\label{Sec:ConePiece}
Our next goal is to control cone-pieces whose diameter is bounded below by a constant $d$. The gist of the following sequence of lemmas is that the probability of a $J$--loose cone-piece of diameter at least $d$ declines exponentially with $d$. See \Cref{Prop:FewConePieces}.

We will need to employ separate arguments for pieces between closed--geodesics $w$ and $w'$ chosen separately, and pieces between a closed--geodesic $w$ and itself. In the case of a piece between $w$ and itself, we will further need to consider overlaps, defined as follows.

\begin{figure}
\begin{overpic}[width=5in]{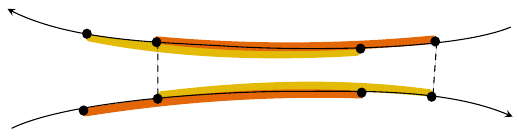}
\put(5,4){$\widetilde w$}
\put(1,18){$\widetilde w' = h \widetilde w$}
\put(14,1){$h^{-1}q'$}
\put(29,3.5){$p$}
\put(67,4){$h^{-1}p'$}
\put(82,4){$q$}
\put(15,21.5){$hq$}
\put(29,20){$p'$}
\put(68,19){$hp$}
\put(82,20){$q'$}
\put(27.5,11){$J$}
\put(84.5,12){$J$}
\put(53,10){$s$}
\put(53,18){$s'$}
\end{overpic}
\caption{The terminology and setup of \Cref{Def:OverlapOrientation}. The $J$--loose piece $s = [p,q] $ is shown in yellow, and the companion $s' = [p',q']$ in orange. In this example, $h$ reverses orientation on the piece $s$, and $s$ has nontrivial $h$--overlap $[p, h^{-1}p']$.}
\label{Fig:Overlap}
\end{figure}

\begin{defn}[Orientation and overlaps]\label{Def:OverlapOrientation}
Let $w \to X$ be a closed--geodesic. Choose distinct preimages $\widetilde w$ and $h \widetilde w$ of $w$ and a constant $J \in \naturals$. Suppose $s = [p,q] \subset \widetilde w$ is a $J$--loose piece between $\widetilde w$ and $h \widetilde w$, such that $|s| > 2J$. Let  $s' = [p',q'] \subset h\widetilde w$ be the companion of $s$, so that
$\dist(p,p')=J =\dist(q,q')$.

Suppose that $\widetilde w$ is oriented from $p$ to $q$, and transfer this orientation to $h \widetilde w$ via $h$. 
Since $|s| = \dist(p,q) > 2J$, the endpoints $p',q'$ of the companion $s'$ cannot coincide.
 We say that $h$ \emph{preserves orientation on $s$} if $p'$ comes before $q'$ in the orientation on $h \widetilde w$, and \emph{reverses orientation on $s$} otherwise. See \Cref{Fig:Overlap}.

The \emph{$h$--overlap in $s$} is the interval $u = s \cap h^{-1} s'$. 
\end{defn}

The main steps of the proof of \Cref{Prop:FewConePieces} can be organized as follows:
\begin{itemize}
\item In \Cref{Lem:ReversingPiecesSmall}, we show that any orientation-reversing piece between $w$ and itself has small overlap, universally bounded by a constant $R$. This is an unconditional (non-probabilistic) statement.
\item In \Cref{Lem:PieceNonGeneric}, we show that pieces between separately sampled closed--geodesics $w$ and $w'$ are exponentially rare. Similarly, pieces between $w$ and itself with small overlap (of size $\leq R$) are exponentially rare.
\item In \Cref{Lem:SelfPieceBigOverlap}, we show that pieces between $w$ and itself with big overlap (of size $\geq R$) are also exponentially rare. This allows us to conclude that each type of piece of diameter $\geq d$ is exponentially rare.
\end{itemize}

\smallskip

We will need the following standard fact about thin quadrilaterals.

\begin{lem}\label{Lem:ThinQuad}
Suppose that $\widetilde X$ is $\delta$--hyperbolic. Let $Q$ be a geodesic quadrilateral with corners $p,q,q',p'$, such that $\dist(p,p') \leq J$ and $\dist(q,q')\leq J$. Then every point $x \in [p,q]$ lies within distance $\mathring J = J + 2\delta$ of some point of $[p',q']$.
\end{lem}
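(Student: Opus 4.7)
The plan is to reduce the thin-quadrilateral statement to two applications of the thin-triangle condition by adding a diagonal.

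First I would pick the diagonal $[p,q']$, splitting $Q$ into two geodesic triangles: $T_1$ with vertices $p, q, q'$ and $T_2$ with vertices $p, q', p'$. Given $x \in [p,q]$, the $\delta$--thin triangles condition applied to $T_1$ produces a point $y \in [p,q'] \cup [q,q']$ with $\dist(x,y) \leq \delta$. I would then split into cases based on which side contains $y$.

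In the case $y \in [q,q']$, the hypothesis $\dist(q,q') \leq J$ forces $\dist(y,q') \leq J$, and since $q' \in [p',q']$ the triangle inequality gives $\dist(x, [p',q']) \leq \delta + J$, which is at most $\mathring{J} = J + 2\delta$. In the case $y \in [p,q']$, I would apply $\delta$--thinness a second time, now to $T_2$, obtaining $z \in [p,p'] \cup [p',q']$ with $\dist(y,z) \leq \delta$. If $z \in [p',q']$, then $\dist(x,z) \leq 2\delta \leq \mathring{J}$; if $z \in [p,p']$, then the bound $\dist(p,p') \leq J$ gives $\dist(z,p') \leq J$, and since $p' \in [p',q']$, the triangle inequality yields $\dist(x,p') \leq \delta + \delta + J = \mathring{J}$, as required.

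This argument is essentially bookkeeping, so there is no real obstacle; the only ``choice'' is which diagonal to add, and either choice works symmetrically. The constant $\mathring{J} = J + 2\delta$ arises precisely because the worst-case route from $x$ to $[p',q']$ traverses both triangles (two $\delta$'s) and then crosses from $[p,p']$ to $p'$ (one $J$).
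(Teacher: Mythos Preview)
Your proof is correct and is essentially the same as the paper's: the paper simply asserts that since triangles are $\delta$--thin, quadrilaterals are $2\delta$--thin (which is exactly your diagonal argument), and then observes that any point of $[p,p'] \cup [q,q']$ is $J$--close to $[p',q']$. Your version just unpacks the $2\delta$--thinness step explicitly.
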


\begin{proof}
This is standard. Since triangles in $\widetilde X$ are $\delta$--thin, then quadrilaterals are $2\delta$--thin. Thus $x \in [p,q]$ is $2\delta$--close to some  point $x' \in [p,p'] \cup [p',q'] \cup [q',q]$. But every point of $[p,p'] \cup [q',q]$ is $J$--close to $[p',q']$.
\end{proof}

\begin{lem}\label{Lem:ReversingPiecesSmall}
Suppose $X$ is compact
 and $\widetilde X$ is $\delta$--hyperbolic. 
 For every $J \in \naturals$, there is a constant $R = R(J, \delta) \geq 10J$ such that the following holds.
Suppose that $s \subset  \widetilde w$ is a $J$--loose cone-piece between $\widetilde w$ and $h \widetilde w$, where $h$ reverses orientation on $s$. Then the $h$--overlap $u \subset s$ has diameter $|u| \leq R$. 
\end{lem}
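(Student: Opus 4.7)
The plan is to show that $h$ acts as an approximate half-turn centered at the midpoint of $u$, yielding a displacement bound $\dist(m,hm)\leq C_1(J,\delta)$, and then invoke cocompactness of the $G$-action via a finiteness argument to bound $|u|$.

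To begin, I would parametrize $\widetilde w$ by arc length $t$ with $p$ at $t=0$ and $q$ at $t=|s|$, and parametrize $h\widetilde w$ by $\tau$ transferred from $\widetilde w$ via $h$, so that $\tau(hx)=t(x)$. The orientation-reversing hypothesis yields $\tau(p')>\tau(q')$; unpacking $u=s\cap h^{-1}s'$ using the identity $t(h^{-1}p')=\tau(p')$ then gives $\tau(p')=|u|$ in the generic case where $h^{-1}q'$ precedes $p$ on $\widetilde w$. Applying \Cref{Lem:ThinQuad} to the quadrilateral $p,q,q',p'$, the closest-point correspondence $\pi\colon s\to s'$ is approximately affine in the parameters and $\tau$-reversing. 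At the midpoint $m\in u$ (with $t(m)=|u|/2$), both $\tau(\pi(m))$ and $\tau(hm)$ are approximately $|u|/2$; since $\pi(m)$ and $hm$ lie on the same geodesic $h\widetilde w$ at nearly the same $\tau$-coordinate, $\dist(\pi(m),hm)\leq O(J+\delta)$. Combining with $\dist(m,\pi(m))\leq J+2\delta$ yields $\dist(m,hm)\leq C_1(J,\delta)$.

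To conclude, I would conjugate $G$-equivariantly so that $m$ lies in a compact fundamental domain $K$ for the $G$-action on $\widetilde X$; then $h$ is confined to the finite set $\{g\in G : \dist(m,gm)\leq C_1\}$. A thin-quadrilateral concatenation further shows that $\widetilde w$ and $h^2\widetilde w$ coarsely fellow-travel at distance $\leq 2J+4\delta$ over a length comparable to $|u|$ in matching orientation, and for $|u|$ exceeding an explicit threshold $R_0(J,\delta)$ this forces $h^2\widetilde w=\widetilde w$, so $h^2\in\stab(\widetilde w)$ has translation length at most $2C_1$ on $\widetilde w$. Both $h$ and $h^2$ then range over finite subsets of $G$, yielding finitely many possible configurations $(\widetilde w,h,s,u)$ and hence finitely many values of $|u|$; the maximum is the desired bound $R=R(J,\delta)\geq 10J$.

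The principal obstacle is rigorously promoting long matching-orientation fellow-traveling of $\widetilde w$ and $h^2\widetilde w$ to the equality $h^2\widetilde w=\widetilde w$: while $\delta$-hyperbolicity strongly suggests this, the precise argument must use the cocompactness of the $G$-action to exclude long near-coincidence of distinct orbit axes. Careful bookkeeping of constants through the thin-quadrilateral estimate, the affine closest-point correspondence, and the final finiteness reduction is required to ensure that $R$ depends on $J$ and $\delta$ in the stated way.
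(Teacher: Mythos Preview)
Your opening step—deducing a uniform displacement bound $\dist(m,hm)\leq C_1(J,\delta)$ from the orientation-reversing hypothesis—is correct in spirit and matches the paper's first move, which instead works at the endpoints $x,y$ of $u$ and establishes $\dist(x,hy)\leq L$ and $\dist(y,hx)\leq L$ for $L=L(J,\delta)$.

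The genuine gap is exactly the one you flag: promoting long matching-orientation fellow-travel of $\widetilde w$ and $h^2\widetilde w$ to the equality $h^2\widetilde w=\widetilde w$. This implication is false in a hyperbolic group acting cocompactly. For instance, in a free group the axes of $a$ and $a^nba^{-n}$ fellow-travel for length roughly $2n$ yet are distinct; cocompactness does not exclude arbitrarily long near-coincidence of distinct axes. Consequently your finiteness reduction also fails: even after conjugating $m$ into a fundamental domain and restricting $h$ to finitely many elements, there remain infinitely many geodesics $\widetilde w$ through a neighborhood of $m$, hence infinitely many configurations $(\widetilde w,h,s,u)$, and no a priori bound on $|u|$ emerges.

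The paper circumvents this entirely by never attempting to relate $h^2$ to $\widetilde w$. Instead it uses the axis of $h$ itself: by \Cref{Conv:NiceX}, $h$ stabilizes a geodesic axis $\widetilde v$ with $\stabletrans{h}\geq 1$. From the endpoint bounds one gets $\dist(x,h^2x)\leq 2L$, so the orbit $\{h^{2n}x\}$ is a uniform quasigeodesic (consecutive jumps $\leq 2L$, stable translation $\geq 2$), hence lies in $\neb_r(\widetilde v)$ for a uniform $r=r(J,\delta)$. Projecting to $\widetilde v$ then gives $\dist(x,hx)\leq 2r+\stabletrans{h}\leq 2r+L$, and combining with $\dist(hx,y)\leq L$ yields $|u|=\dist(x,y)\leq 2r+2L=R$. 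The key point you are missing is that the axis $\widetilde v$ of $h$ is the right anchor, not $\widetilde w$.
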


\begin{proof}
In the notation of \Cref{Def:OverlapOrientation}, the overlap is  $u = [p,q] \cap h^{-1}[p', q']$. Since $h$ is orientation-reversing, the endpoints of $u$ are $x \in \{p, h^{-1}q'\}$ and $y \in \{q, h^{-1}p'\}$. We 
begin by showing there is a constant $L = L(J, \delta)$ such that
\begin{equation}\label{Eqn:EndpointsClose}
\dist(x, hy) \leq L \quad \text{and}  \quad \dist(y, hx) \leq L.
\end{equation}

To prove \eqref{Eqn:EndpointsClose}, we consider two cases. For the first case, suppose that $\dist(p,h^{-1}q') \leq 2J$ and $\dist(q,h^{-1}p') \leq 2J$. Since $x \in \{p, h^{-1}q'\}$ and $hy \in \{p',hq\}$,
 triangle inequalities imply that
\[
\dist(x,hy) \leq \dist(x,p) + \dist(p,p') + \dist(p', hy) \leq 2J + J + 2J = 5J.
\]
By an identical calculation, $\dist(y,hx) \leq 5J$. 

For the second case, suppose that $\dist(p,h^{-1}q') > 2J$ or $\dist(q,h^{-1}p') > 2J$. Since we have $|\dist(p,q) - \dist(p',q')| \leq 2J$, it follows that one of $h^{-1}(q'), h^{-1}(p')$ lies in $[p,q]$. Assume without loss of generality that $h^{-1}(p') \in [p,q]$. If we furthermore assume
that $\dist(q,h^{-1}p') > 2J$,
then the endpoints of $u$ are $x=p$ and $y=h^{-1}p'$, as depicted in \Cref{Fig:Overlap}. It follows immediately that
\[
\dist(x,hy) = \dist(p,p') = J.
\]
Furthermore, as in \Cref{Lem:ThinQuad},
every point of $[p,q]$ is $2\delta$--close to some point of $[p,p'] \cup [p',q'] \cup [q',q]$. Since $\dist(y, q) > J$, it follows that
$y  \in [p,q]$ is $2\delta$--close to some point $y' \in [p',q']$. Furthermore, since $\dist(x,y) = \dist(hy, hx) = \dist(p',hx)$, 
triangle inequalities imply that $\dist(hx,y') \leq J + 2 \delta$. Thus
\[
\dist(y,hx) \leq \dist(y,y') + \dist(y',hx) \leq 2\delta + (J+2\delta) = J + 4 \delta.
\]

It remains to consider the possibility that $h^{-1}(p') \in [p,q]$ and $\dist(q,h^{-1}p') \leq 2J$. Since we are in the second case, triangle inequalities imply that $2J < \dist(p,h^{-1}q') \leq 4J$. Now, we argue exactly as in the first case, and conclude
\[
\dist(x,hy)  \leq 4J + J + 2J = 7J \quad  \text{and} \quad \dist(y,hx)  \leq 4J + J + 2J = 7J.
\]
Setting $L = \max(7J, J+4\delta)$ completes the proof of \Cref{Eqn:EndpointsClose}.

Now, we complete the proof of the lemma. Applying \eqref{Eqn:EndpointsClose} twice shows that 
$\dist(x,h^2 x) \leq 2L$ and $\dist(h^{-1}x,hx) \leq 2L$.
By \Cref{Conv:NiceX},
$h$ has an invariant geodesic axis $\widetilde v$ in 
$\widetilde X$, hence
 $\stabletrans{h} \geq 1$.
  Thus
the $\langle h^2 \rangle$--orbit $\{ h^{2n} (x) \mid n \in \integers \}$ lies along a quasigeodesic whose quality depends only on $J$ and $\delta$. Similarly, 
$\{ h^{2n} (hx) \mid n \in \integers \}$ lies along a  uniform quasigeodesic. 

The above quasigeodesics must uniformly fellow-travel $\widetilde v$. As a consequence, there is a uniform 
radius $r$, depending only on $J$ and $\delta$,  such that  $\langle h \rangle (x)  \subset  \neb_r(\widetilde v)$.

Since $x, hx \in \neb_r(\widetilde v)$, considering closest-point projections to $\widetilde v$ gives  
\[
\dist(x, hx) \leq r + \stabletrans{h} + r \leq 2 r + L.
\]
Meanwhile \Cref{Eqn:EndpointsClose} gives $\dist(hx, y) \leq L$. Thus $R = 2r + 2L$ is a uniform bound on $|u| = \dist(x,y)$. Since $L \geq 7J$ by definition, we have $R \geq 14J$.
\end{proof}

Now, we argue that closed--geodesics containing a piece of diameter $\geq d$ happen with a probability that decays exponentially with $d$. 

\begin{lem}\label{Lem:PieceNonGeneric}
Suppose that every non-trivial element of $\pi_1 X$ stabilizes a geodesic in a $\delta$--hyperbolic space $\widetilde X$. For each non-trivial conjugacy class in  $\pi_1 X$, choose
 a closed--geodesic in $X$ that represents it. 

Fix constants $J, R \in \naturals$. Then, for all $d > 2J$ and all $\ell$ sufficiently large, the following holds:
\begin{enumerate}[\:\: $(1)$]
\item \label{Itm:SeparateSample}
Among all pairs of conjugacy classes $[g], [g']$ of length at most $\ell$, the proportion whose representative closed--geodesics have a $J$--loose cone piece of diameter $\geq d$ is less than 
 $M \ell^2 \, e^{-b d}$, for a constant $M = M(X, J)$.
 
 \item\label{Itm:SelfPieceSmallOverlap}
 Among all conjugacy classes $[g]$ of length at most $\ell$, the proportion whose representative closed--geodesic has a $J$--loose cone piece with itself, with diameter $\geq d$ and overlap of length $\leq R$, is 
  less than $M_0 \ell^2  e^{-b d}$,  for a constant $M_0 = M_0(X, J, R)$.
  \end{enumerate}
\end{lem}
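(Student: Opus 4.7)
The proof of both parts proceeds by the counting paradigm of \Cref{Prop:WallPieceControl}, with the hyperplane carrier (governed by growth exponent $a$) replaced by a second closed-geodesic (governed by $b$). The exponential decay in $d$ arises from the fact that sharing a geodesic segment of length $n_s\geq d$ between two bi-infinite geodesics constrains the count of configurations by a factor of $e^{-bn_s}$; summing the geometric series $\sum_{n_s\geq d} e^{-bn_s}$ yields the $e^{-bd}$ factor. The polynomial $\ell^2$ factor appears as the square of the ratio between the upper bound $B e^{b\ell}$ and the lower bound $A e^{b\ell}/\ell$ on $|\mathcal G(\ell)|$ furnished by \Cref{Thm:ConjugacyGrowth}.\eqref{Itm:ConjugacyGrowth} (together with \Cref{Rem:NonPrimitive}).

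\textbf{Part (1).}  I fix $n_s\geq d$ and count ordered pairs of conjugacy classes $([g],[g'])$ whose representative closed-geodesics share a $J$-loose cone piece of length exactly $n_s$.  After cyclic permutation, the representatives take the form $w=s\cdot y$ and $w'=s'\cdot y'$, where $s$ has length $n_s$ and $s'$ is the companion with endpoints $J$-close to those of $s$. Using \Cref{Thm:ConjugacyGrowth}.\eqref{Itm:TotalGrowth} together with \Cref{Rem:GroupoidGrowth}, the number of geodesic segments $s$ of length $n_s$ in $\widetilde X$, modulo the $G$-action, is at most $B' e^{bn_s}$ for a constant $B'=B'(J,X)$.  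The companion $s'$ contributes $O(1)$ further choices by local finiteness of $\widetilde X$ and coarse uniqueness of geodesics.  The closing arcs $y,y'$, whose endpoints range over $G$-translates of the other endpoint of $s,s'$, contribute at most $B e^{b(|w|-n_s)}$ and $B e^{b(|w'|-n_s)}$ options respectively.  Summing over $n_s\geq d$ and $|w|,|w'|\leq \ell$ and dividing by $|\mathcal G(\ell)|^2\geq (Ae^{b\ell}/\ell)^2$ produces the claimed bound $M\ell^2 e^{-bd}$.

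\textbf{Part (2).}  A self-piece forces $w=w'$, so one loses the independence between two conjugacy classes but gains the freedom of a cyclic offset $t$ specifying where the second occurrence $h^{-1}s'$ appears on the axis $\widetilde w$ relative to $s$.  The small-overlap hypothesis $|u|\leq R$ ensures that $s$ and $h^{-1}s'$ are essentially disjoint subsegments of $\widetilde w$, so the counting mirrors Part (1): I choose $s$ ($\leq B' e^{bn_s}$ options), its companion $s'$ ($O(1)$ options), the cyclic offset $t$ ($\leq \ell$ options), and the two gap arcs $y_1,y_2$ closing up the axis, whose total length is $|w|-2n_s+|u|\leq |w|-2n_s+R$ and which contribute at most $B e^{b(|w|-2n_s+R)}$ options; the dependence on $R$ is absorbed into a constant $M_0=M_0(X,J,R)$.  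Summing over $n_s\geq d$ and $|w|\leq\ell$ and dividing by $|\mathcal G(\ell)|\geq Ae^{b\ell}/\ell$ yields the stated bound $M_0\ell^2 e^{-bd}$.

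\textbf{Main obstacle.} Part (2) is the more delicate half.  The danger is multiple-counting: a single axis $\widetilde w$ could a priori contribute many witness tuples $(s,s',t)$ corresponding to the same $[g]$.  The bound $|u|\leq R$ is what tames this, ensuring that once the two subsegments are chosen they occupy disjoint portions of one period of $\widetilde w$ (up to the additive constant $R$), so the paradigm of independent sub-pieces from Part (1) applies.  The offset parameter $t\leq |w|\leq \ell$ provides the extra linear factor needed to compensate for restricting to a single conjugacy class rather than a pair.
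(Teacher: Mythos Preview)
Your proposal is correct and follows essentially the same counting paradigm as the paper. The one organizational difference is in Part~(1): the paper fixes $[g]$ and bounds, uniformly in $[g]$, the proportion of $[g']$ whose closed--geodesic has a large piece with $w$ (so the companion $s'$ is determined by $s$ up to $O(1)$ choices and only $y'$ contributes an exponential factor), whereas you count ordered pairs directly and divide by $|\mathcal G(\ell)|^2$. Both routes land on the same $M\ell^2 e^{-bd}$; the paper's version has the slight advantage of yielding a conditional bound for every fixed $[g]$, but that extra strength is not used downstream.
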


\begin{proof}
Before beginning the probabilistic portion of the proof, we make a reduction, introduce notation, and name some constants.
Let $D$ be the diameter of $X$. Let $\mathring J = J + 2\delta$, and let $V$ be the maximal number of vertices in a ball of radius  $(D+J+\mathring J)$ in $\widetilde X$. Let $W$ be the maximal number of vertices in a ball of radius $n_0$, where $n_0$ is the threshold constant of \Cref{Thm:ConjugacyGrowth}. Fix a basepoint $x_0 \in \widetilde X$.

Suppose that $[g], [g']$ are conjugacy classes represented by closed--geodesics $w, w'$ that have a $J$--loose cone-piece. (For now, we do not place any constraint on the lengths of $w, w'$, or the piece.)
By \Cref{Def:LoosePiece}, the piece in $w$ is the image of a geodesic segment $s \subset \widetilde w  \subset \widetilde X$, which is a $J$--loose cone piece between $\widetilde w$ and $\widetilde w'$. By choosing the preimage $s$ appropriately, we ensure the additional property that the initial point $p \in s$ lies within radius $D$ of the basepoint $x_0 \in \widetilde X$. Let $s' \subset \widetilde w'$ be the companion of $s$. Then the initial point of $s'$ is a point $p'$ 
that lies $J$--close to $p \in S$, hence within distance $D+J$ of the basepoint $x_0$.  Furthermore, by \Cref{Lem:ThinQuad}, every point of $s'$ lies within distance $\mathring J = J + 2\delta$ of some point of $s$.

Since conjugacy classes in $\pi_1 X$ correspond to free homotopy classes in $X$, every geodesic segment in $ \widetilde w'$ of length $|w'|$ is a fundamental domain for $w'$. It will be convenient to choose a fundamental domain $v' \subset \widetilde w'$ that begins at $p'$, such that the initial segment of $v'$ coincides with the initial segment of $s'$. Since the orientation of $s'$ (determined by the condition that the initial point $p' \in s'$ is $J$--close to $p \in s$) may or may not coincide with the orientation of $w'$, we conclude that the segment $v'$ chosen as above is a fundamental domain for $(w')^{\pm 1}$.
\smallskip

Now, we proceed to the proof of \eqref{Itm:SeparateSample}. Fix a conjugacy class $[g]$ and its representative  closed--geodesic $w$. We will bound the number of conjugacy classes $[g']$, with representative closed--geodesic $w'$, such that $w$ has a $J$--loose cone-piece with $w'$, of diameter at least $d$.

As in the above notation, let $s \subset \widetilde w$ be a preimage of the piece in $w$ that begins within distance $D$ of the basepoint $x_0$. As in the opening paragraph of the proof, let $s' \subset \widetilde w'$ be the corresponding sub-segment of $\widetilde w'$, so that the initial point $p \in s$ is $J$--close to the initial or terminal point $p' \in s'$. 

As a warm-up case, suppose that  $|s'| \geq |w'| - n_0$, where $n_0$ is the threshold constant in \Cref{Thm:ConjugacyGrowth}.
As above, $(w')^{\pm 1}$ has a fundamental domain $v' \subset \widetilde w'$ whose initial segment coincides with the initial segment of $s'$. More precisely, we have a fundamental domain $v' = [p', r']$ and a point $q' \in [p', r']$ such that $s' \cap v' = [p',q']$ and $\dist(q',r') \leq n_0$.
Then, by construction, we have $\dist(p,p') = J$. Furthermore, by \Cref{Lem:ThinQuad}, we have $\dist(q',q) \leq \mathring J$ for some point $q \in s$. Observe that $|\dist(p,q) - \dist(p',q')| \leq J + \mathring J$.

By the definition of $V$, and the choice of $s$ so that its start lies close to the basepoint, there are at most $V$ choices for where $p'$ can lie. Then, once $n = \dist(p,q)$ is chosen, the point $q \in s$ is determined, hence there are at most $V$ choices for $q' \in s'$. Since $\dist(q',r') \leq n_0$, it follows that once $q'$ is chosen, there are at most $W$ choices for $r'$.
Consequently, for every $n$, there are at most $V^2W$ choices for the segment $v'$,
hence at most $2 V^2 W$ choices for $[g']$, where the factor of $2$ accounts for the orientation of $g'$. Recalling that $n \leq \ell + J + \mathring J$, the total number of possibilities for $[g']$ is at most
\[
2 V^2W \cdot (\ell + J + \mathring J).
\]
In particular, the bound grows linearly with $\ell$ in the (non-generic) warm-up case.

\smallskip

Having finished the warm-up case, assume that $|s'| < |w'| - n_0$. Then, as above, $(w')^{\pm 1}$ has a fundamental domain $v' \subset \widetilde w'$ whose initial segment coincides with that of $s'$. Since $|w'| > |s| + n_0$, we have $v' = s' \cdot y'$, where $|y'| > n_0$. 

We first bound the number of possibilities for $s'$. 
Since the endpoints of $s'$ are $J$--close to those of $s$, for every choice of $s$ there are at most $2 V^2$ choices for where  $s'$ begins and ends. (The factor of $2$ comes from the choice of direction of fellow-traveling.) Turning attention to $y'$, observe that the endpoint of $s'$ is the starting point of $y'$. Since $|y'| \geq n_0$,  \Cref{Thm:ConjugacyGrowth}.\eqref{Itm:TotalGrowth} and \Cref{Rem:GroupoidGrowth} imply that there are at most $B  e^{b|y'|}$ choices of where $y'$ ends. Thus, for every choice of $s$, the number of choices for  $(s'y')^{\pm 1}$ up to path-homotopy is at most 
\begin{equation}\label{Eqn:InitialCountS}
2\cdot V^2 \cdot B e^{b|y'|} \leq 2 V^2 B e^{b(|w| - |s| + 2 J)} \leq 2 V^2 B e^{b(\ell - |s| + 2 J)}.
\end{equation}

Recall that the companion $s'$ is determined by the $J$--loose cone-piece $s \subset \widetilde w$. There are $|w| \leq \ell$ possible choices of where  the projection of $s$ begins in the closed--geodesic $w$. There are also choices for the length $|s|$, constrained by the inequalities $d \leq |s| < |w'| \leq \ell$. Summing over these possible choices,  we 
conclude that for every conjugacy class $[g]$, the number of possibilities for $[g']$ such that the $J$--loose cone-piece $s$ has diameter at least $d$, is bounded above by
\begin{equation}\label{Eqn:ConePieceInitialCount}
\sum_{|s| = d}^\ell \ell \cdot 2 V^2 B e^{b(\ell - |s| + 2 J)}
<   \sum_{|s| = d}^\infty \ell \cdot 2 V^2 B e^{b(\ell - |s| + 2 J)}
= \frac{ 2 V^2 B \ell \, e^{b(\ell-d + 2J) )}}{ 1 - e^{-b} }.
\end{equation}
By increasing the constant $B$ if needed, we may ensure that the above exponential bound subsumes the linear bound in the non-generic warm-up case.

Next, recall that $\mathcal{G}(\ell)$ denotes the set of nontrivial conjugacy classes of translation length at most $\ell$. By \Cref{Thm:ConjugacyGrowth}.\eqref{Itm:ConjugacyGrowth}, the number of such conjugacy classes satisfies
\[
| \mathcal{G}(\ell) | \geq \frac{A}{\ell} e^{ b \ell} .
\]
Dividing the previous two equations produces a bound on the fraction of conjugacy classes 
$[g']$ whose representative closed--geodesic $w'$ has a $J$--loose cone-piece of diameter at least $d$ with $w$.
This fraction is at most
\[
\frac{ 2 V^2 B \ell e^{b(\ell - d + 2J )}}{ 1 - e^{-b} }  \cdot \frac{\ell} {A} e^{-b \ell}  
\: =  \: e^{-bd} \cdot  \ell ^2 \cdot \frac{ 2 V^2 B e^{2b J}}{ A (1 - e^{-b} )}  
\: = \: \: e^{-bd} \cdot  \ell ^2 \,  M.
\]
Here, $M = \frac{ 2 V^2 B e^{2b J}}{ A (1 - e^{-b} )} $ is a constant that depends only on $X$ and $J$.

\smallskip

The proof of Conclusion~\eqref{Itm:SelfPieceSmallOverlap} is very similar. Suppose that $s \subset \widetilde w$ is a $J$--loose cone piece between $\widetilde w$ and $h \widetilde w$, of diameter $|s| \geq d > 2J$. Since $d$ is large, \Cref{Def:OverlapOrientation} applies. Let $s' \subset h\widetilde w$ be the companion of $s$, and let $u = s \cap h^{-1}(s')$ be the overlap. By hypothesis, we have $|u| \leq R$. Let $(v')^{\pm 1} = s' \setminus h(u) = s' \setminus h(s)$ be the portion of $s'$ that is not in the image of the overlap. Then $s$ and $v'$ project to disjoint portions of the closed--geodesic $w \subset X$, and the sign $\pm 1$ is chosen so that $s$ and $v'$ are oriented consistently along $w$. Thus there is a fundamental domain for $w$ of the form $y_1 v' y_2 s$. 

Observe that $|y_1 v' y_2| \leq \ell - |s|$, and the subsegment $v'$ of length $|v'| \geq |s'| - R \geq |s| - 2J - R$ is entirely determined by $s$ and a bounded amount of extra data.
Thus, as in \Cref{Eqn:InitialCountS}, we compute that for every choice of $s$, the number of choices for $y_1 v' y_2$ up to path-homotopy is at most
\[
2 \ell \, V^2 B e^{b(\ell - 2|s| + 2J +R)}.
\]
Compared to \eqref{Eqn:InitialCountS}, the extra factor of $\ell$ comes from the choice of where in the fundamental domain the subword $v'$ occurs, and the extra constant $R$ in the exponent comes because we have removed the overlap from $s'$. Since there are at most $Be^{b|s|}$ choices for $s$, we conclude that the number of possibilities for $w$ is bounded above by
\[
2 \ell \, V^2 B^2 e^{b(\ell - |s| + 2J +R)} \leq 2 \ell \, V^2 B^2 e^{b(\ell - d + 2J +R)}.
\]

The remainder of the proof of \eqref{Itm:SeparateSample}, comparing the above upper bound to the lower bound  $\mathcal{G}(\ell) \geq (A/\ell) e^{b\ell}$, goes through verbatim. We conclude that the fraction of conjugacy classes $[g]$ whose representative closed--geodesic has a $J$--loose cone-piece with itself, with diameter $\geq d$ and overlap of length $\leq R$, is bounded by
\[
2 \ell \, V^2 B^2 e^{b(\ell - d + 2J +R)} \cdot \frac{\ell}{A} e^{-b \ell} = e^{-bd} \cdot \ell^2 \cdot \frac{ 2 V^2 B^2 e^{b(2J +R)}}{A}.
\]
Setting $M_0 = 2 V^2 B^2 e^{b(2J +R)}/A$ completes the proof.
\end{proof}

\begin{lem}\label{Lem:InfiniteWord}
Suppose $X$ is compact and $\widetilde X$ is $\delta$--hyperbolic.
Choose a non-trivial element $h \in \pi_1 X$.
 Suppose that $yx$ is a  path from $p \in \widetilde X$ to $hp \in \widetilde X$. Suppose that each of $x$ and $y$ is a geodesic, and that 
 $|x| \leq J$ for some $J \geq 0$.
Define $L$ to be the maximal length of a terminal segment of $y$ that is a  $2\delta$ fellow-traveler
 with an initial segment of $hy$.

Let $(yx)^\infty$ denote the bi-infinite path $\cdots h^{-1} (yx) h^0(y x) h^1(yx) h^2 (yx) \cdots$.
Then  $(yx)^\infty$ is an $\eta$--quasigeodesic, with $\eta$ depending only on $J$, $\delta$, and $L$.
\end{lem}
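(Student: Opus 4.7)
The plan is to compare $(yx)^\infty$ with the geodesic axis $\widetilde v$ of $h$, which exists by \Cref{Conv:NiceX} with translation length $|h|_{\widetilde X} \geq 1$. My goal is to show that $(yx)^\infty$ lies in a uniform neighborhood of $\widetilde v$ and progresses linearly along it; these two facts together yield the quasi-geodesic conclusion via a standard fellow-traveling argument.

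The central step is bounding $r = \dist(p, \widetilde v)$ in terms of $J, \delta, L$. The fellow-traveling overlap $L$ says that a terminal segment of $y$ (near $q$) and an initial segment of $hy$ (near $hp$) track each other within $2\delta$ for length $L$. By $\delta$-hyperbolicity of $\widetilde X$, and via a thin-quadrilateral argument in the spirit of \Cref{Lem:ThinQuad}, this forces the two segments to form a narrow wedge at their common origin, with angular separation decaying as $L$ grows. Because the two segments are related by the isometry $h$ whose axis is $\widetilde v$, a narrow wedge at the junction translates, after tracking additive errors, into a uniform bound $r \leq D_0(J, \delta, L)$. Once $r$ is bounded, the triangle inequality gives $|y| \leq \dist(p, hp) + |x| \leq 2r + |h|_{\widetilde X} + J$, while the fellow-traveling simultaneously gives $|h|_{\widetilde X} \leq |y| - L + O(\delta + J)$; together these bound both $T = |y| + |x|$ and $|h|_{\widetilde X}$ purely in terms of $J, \delta, L$.

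With each segment of $(yx)^\infty$ now of bounded length and with endpoints within $r$ of $\widetilde v$, the whole path will lie in $\neb_D(\widetilde v)$ for some $D = D(J, \delta, L)$. Since it also progresses along $\widetilde v$ at rate $|h|_{\widetilde X} \geq 1$ per period $T$, the standard fact that a path in a bounded neighborhood of a bi-infinite geodesic, with linear forward progress, is itself a quasi-geodesic will give the conclusion with $\eta = \eta(J, \delta, L)$. The main obstacle will be the first step, converting the fellow-traveling length $L$ into a bound on $r$. In $\HH^2$ this is essentially the divergence formula for hyperbolic rays, yielding angular separation $\lesssim e^{-L}$ and hence the desired control over $r$; in an abstract $\delta$-hyperbolic space one must argue coarsely using slim-triangle inequalities and carefully track the additive errors arising from $J$ and $\delta$.
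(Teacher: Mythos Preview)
Your central claim --- that $|y|$ and $|h|_{\widetilde X}$ are bounded purely in terms of $J,\delta,L$ --- is false, and the two inequalities you write down do not combine to give it. Take $p$ on the axis $\widetilde v$ (so $r=0$), $x$ trivial ($J=0$), and $y$ the geodesic from $p$ to $hp$ along $\widetilde v$. Then the terminal part of $y$ and the initial part of $hy$ point in opposite directions along $\widetilde v$, so $L$ is at most a constant depending on $\delta$; yet $|y|=|h|_{\widetilde X}$ can be arbitrarily large. Algebraically, substituting your inequality $|h|\le |y|-L+O(\delta+J)$ into $|y|\le 2r+|h|+J$ yields only $L\le 2r+O(\delta+J)$, a relation between $L$ and $r$, not a bound on either $|y|$ or $|h|$. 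So the step ``together these bound both $T$ and $|h|_{\widetilde X}$'' is a genuine gap.

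Your broader strategy could be salvaged: if one can show $r\le D_0(J,\delta,L)$ (which you assert but do not prove), then $(yx)^\infty$ does lie in a uniform neighborhood of $\widetilde v$ (each $y$ is a geodesic between points at bounded distance from $\widetilde v$), and the progress ratio $|h|/(|y|+|x|)\ge |h|/(2r+|h|+2J)$ is bounded below since $|h|\ge 1$. That would give the quasi-geodesic conclusion without bounding $|y|$. But this is not the argument you wrote.

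The paper takes a completely different and shorter route. For $|y|$ small it invokes compactness: there are only finitely many possibilities for $yx$, so take the worst $\eta$. For $|y|$ large, the key observation is that $L$ is an \emph{upper} bound on backtracking at each junction of $y\cdot x\cdot hy$; since $y$ and $hy$ are long geodesics and $|x|\le J$, the concatenation is a local quasi-geodesic with constants depending on $J,\delta,L$, and Cannon's local-to-global principle \cite{cannon:cocompact-hyp-groups} upgrades this to a global quasi-geodesic. No comparison with the axis of $h$ is needed.
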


\begin{proof}
If $|y|$ is small, there are only finitely many possibilities for $yx$. The bi-infinite path $(yx)^\infty$ lies at bounded distance from some quasi-axis of $h$. So, take the worst case scenario for the quasigeodesic constant $\eta$.

If $|y|$ is large, we have large geodesic subpaths of $(yx)h(y)$. By hypothesis, we have a bound $L$ on the length of backtracking in $(yx)h(y)$.
Now,  apply a ``local to global'' principle for quasi-geodesics. See e.g.\ Cannon~\cite[Thm 4]{cannon:cocompact-hyp-groups}.
\end{proof}

We can now use Lemmas~\ref{Lem:ReversingPiecesSmall}, \ref{Lem:PieceNonGeneric}, and \ref{Lem:InfiniteWord} to show that large $J$--loose pieces between a closed--geodesic and itself are exponentially rare.

\begin{lem}\label{Lem:SelfPieceBigOverlap}
Suppose $X$ is compact and $\widetilde X$ is hyperbolic.
Fix a constant $J \in \naturals$. Then there is a constant $M = M(X, J)$ such that for all sufficiently large $\ell$ and for all $d \in [2J+1, \, \ell/3]$, the following holds.
 Among all conjugacy classes $[g]$ of length at most $\ell$, the proportion whose representative closed--geodesic has a $J$--loose cone piece with itself, with diameter $\geq d$, is 
  less than $M \ell^2  e^{-b d}$.
\end{lem}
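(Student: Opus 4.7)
My plan is to split the conjugacy classes with a self cone-piece of diameter at least $d$ according to the size of the $h$--overlap $u$ of that piece, handling small overlap by direct appeal to \Cref{Lem:PieceNonGeneric}.\eqref{Itm:SelfPieceSmallOverlap} and large overlap by a self-return counting argument. Let $[g]$ be represented by $w$, with cone-piece $s$ between $\widetilde w$ and $h\widetilde w$, companion $s' \subset h\widetilde w$, and $h$--overlap $u = s \cap h^{-1}(s')$.

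In the regime $|u| \leq R$, where $R$ is from \Cref{Lem:ReversingPiecesSmall}, \Cref{Lem:PieceNonGeneric}.\eqref{Itm:SelfPieceSmallOverlap} immediately gives the bound $M_0 \ell^2 e^{-bd}$. For the remaining regime $|u| > R$, \Cref{Lem:ReversingPiecesSmall} forces $h$ to preserve orientation on $s$. Here the cone-piece yields two sub-arcs $\bar s, \bar{s'} \subset w$ of length at least $d - 2J$, pointwise $\mathring J$--close in $X$ by the thin-quadrilateral reasoning of \Cref{Lem:ThinQuad}. This produces a self-return of $w$: positions $\alpha, \beta$ along $w$ with $\alpha \not\equiv \beta \pmod{|w|}$ and a length $n \geq d - 2J$ such that $w|_{[\beta, \beta+n]}$ is pointwise $\mathring J$--close to $w|_{[\alpha, \alpha+n]}$. \Cref{Lem:InfiniteWord} confirms this picture: applying it with $y$ running along the overlap and $x$ a short jump produces an $\eta$--quasi-axis of $h$ whose quasigeodesic constants depend only on $J$ and $\delta$, which justifies treating the shift $\Delta = \beta - \alpha$ as well-defined modulo $|w|$.

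To count $w$'s exhibiting such a self-return, I use cyclic invariance of closed geodesics to fix $\alpha = 0$, letting $\beta \in [1, |w|]$ contribute at most $\ell$ choices. For each triple $(0, \beta, n)$ with $n \geq d - 2J$, once $w|_{[0, n]}$ is specified (at most $B e^{bn}$ choices via \Cref{Thm:ConjugacyGrowth}.\eqref{Itm:TotalGrowth}), the pointwise $\mathring J$--closeness constraint restricts $w|_{[\beta, \beta + n]}$ to endpoints lying in $\mathring J$--balls and hence to a number of possibilities growing at most polynomially in $n$. The remaining portion of $w$, of length at most $\ell - 2n$ (using $d \leq \ell/3$ to guarantee the two sub-arcs fit disjointly in a fundamental domain), contributes at most $B e^{b(\ell - 2n)}$ further choices. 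The product gives $O(\mathrm{poly}(n)) \cdot e^{b(\ell - n)}$ per triple. Summing over $n \geq d - 2J$ yields a geometric series dominated by $C e^{b(\ell - d)}$, and multiplying by the $\ell$ choices for $\beta$ and dividing by $|\mathcal G(\ell)| \geq (A/\ell) e^{b\ell}$ from \Cref{Thm:ConjugacyGrowth}.\eqref{Itm:ConjugacyGrowth} yields $M' \ell^2 e^{-bd}$. Taking $M = M_0 + M'$ completes the argument.

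The main obstacle is verifying rigorously that the self-return argument genuinely yields an $e^{-bd}$ savings rather than a weaker rate: the bound on the number of combinatorial geodesic segments pointwise-close to a fixed geodesic segment in a hyperbolic CAT(0) cube complex requires careful use of $\delta$--thinness, and the cyclic reduction to $\alpha = 0$ must be justified in view of the possible non-primitivity of $[g]$ (which is already harmless by \Cref{Rem:NonPrimitive}). The hypothesis $d \leq \ell/3$ enters to ensure that the two sub-arcs participating in the self-return are genuinely distinct and disjoint within a single fundamental domain of $w$, preventing degeneracies that would inflate the count and disturb the $\ell^2$ factor.
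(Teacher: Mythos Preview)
Your treatment of the small-overlap regime via \Cref{Lem:PieceNonGeneric}.\eqref{Itm:SelfPieceSmallOverlap} and the reduction to the orientation-preserving case via \Cref{Lem:ReversingPiecesSmall} match the paper. The gap is in your large-overlap count.

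You assert that $d \leq \ell/3$ makes the two sub-arcs $\bar s, \bar{s'} \subset w$ disjoint in a fundamental domain, so that the ``remaining portion'' has length at most $\ell - 2n$. But this is exactly what fails in the large-overlap regime: by definition the $h$--overlap is $u = s \cap h^{-1}(s') \subset \widetilde w$, so the projections $\bar s$ and $\bar{s'}$ to $w$ share a sub-arc of length $|u| > R$, and $|u|$ can be as large as (nearly) $|s|$. Concretely, the shift $\beta - \alpha$ along $w$ is about $|s| - |u|$, which may be tiny; then $\bar s \cup \bar{s'}$ covers only $w|_{[0,\beta+n]}$, the complement has length $|w| - \beta - n$, and your count yields $e^{bn}\cdot \mathrm{poly}\cdot e^{b(|w|-\beta-n)} \approx e^{b(|w|-\beta)}$. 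When $\beta$ is bounded you get no exponential savings at all. The hypothesis $d \leq \ell/3$ only bounds $2n \leq |w|$; it says nothing about whether the arcs overlap.

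The paper's remedy is precisely the content you cite from \Cref{Lem:InfiniteWord}, but used in a different way than you describe. Writing the overlap as $y_2$ so that $s = y_2 y_3$ and $h^{-1}(s') = y_1 y_2$, one shows that the periodic path $(x_1 y_1)^\infty$ (with $|x_1| \leq J$) is an $\eta$--quasigeodesic, and that $y_2 y_3$ must $\delta'$--fellow-travel it. Hence the entire piece $s = y_2 y_3$, of length $\geq d$, is determined by the short segment $y_1$ together with bounded auxiliary data; this is what produces the $e^{-bd}$ savings regardless of how large the overlap is. There is also a second case in the paper, when $|s| > \tfrac12|w|$, requiring passage to a terminal sub-piece of length exactly $d$ before re-running the argument; your sketch does not separate out this case either.
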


\begin{proof}
Consider a closed--geodesic $w$ representing the conjugacy class $[g]$. As a warm-up, we dismiss the (non-generic) possibility where $|w| \leq 2 d$. Since $d \leq \ell/3$, it follows that $|w| \leq 2 \ell / 3$. By \Cref{Thm:ConjugacyGrowth}.\eqref{Itm:ConjugacyGrowth}, the total number of conjugacy classes of length at most $2 \ell/3$ is at most $B e^{2 \ell/3}$, whereas the total number of length at most $\ell$ is at least $A e^{b \ell}/\ell$. Thus, the probability that $|w|$ is at most $2/3$ of the allowed length is 
\[
\frac{\mathcal{G}(2\ell/3)}{\mathcal{G}(\ell)} \leq \frac{B e^{2 b\ell/3}}{A e^{b \ell}/\ell} = \frac{B\ell }{A} e^{-b\ell/3} \leq \frac{B}{A} \ell \,  e^{-bd} .
\]
Thus the conclusion of the lemma holds for $M_1 = B/A$. Note that this warm-up case did not use any hypotheses about pieces between $w$ and itself.

Now, suppose that a closed--geodesic $w$ representing the conjugacy class $[g]$ has a $J$--loose cone-piece with itself. Following \Cref{Def:LoosePiece}, let $s \subset \widetilde w$ be a preimage of the piece in $w$, where the endpoints of $s$ lie at distance $J$ from $h \widetilde w$. Following \Cref{Def:OverlapOrientation}, let $s' \subset h \widetilde w$ be the companion of $s$, and let $u = s \cap h^{-1}s'$ be the $h$--overlap in $s$.

Let $R = R(J, \delta) \geq 10J$ be the constant of \Cref{Lem:ReversingPiecesSmall}. By \Cref{Lem:PieceNonGeneric}.\eqref{Itm:SelfPieceSmallOverlap}, pieces with an overlap of diameter $|u| \leq R$ occur with probability at most $M_2 \ell^2  e^{-b d}$, where $M_2 = M_2 (X, J) = M_0(X, J, R(J,\delta))$ in the notation of \Cref{Lem:PieceNonGeneric}. Thus, we may suppose that $|u| > R$. By \Cref{Lem:ReversingPiecesSmall}, $h$ preserves the orientation on $s$. We now consider two cases.

\smallskip

\textbf{Case 1}: $|s|  \leq \frac{1}{2} |w|$. Without loss of generality, suppose that the overlap $u \subset s$ occurs at the beginning of $u$. Then there is a sub-geodesic $y_1 y_2 y_3 \subset \widetilde w$, where $s = y_2 y_3$ and $s' = h(y_1 y_2)$, so that $y_2 = s \cap h^{-1} s'$ is the $h$--overlap. 
Then $|y_2| \geq R \geq 10J$, where the second inequality comes from \Cref{Lem:ReversingPiecesSmall}. Consequently,
\[
| y_1| = |y_1 y_2| - |y_2| \leq |y_1 y_2| - 10J \leq (|y_2 y_3| + 2J) - 10J \leq  \tfrac{1}{2} |w| - 8J,
\]
and we can conclude that $|y_1 y_2 y_3| < |w|$. Thus we may choose a fundamental domain for $w$ of the form $y_1 y_2 y_3 z$.

Let $x_1$ denote a geodesic from the endpoint of $y_1$ to the start of $h(y_1)$, and let $x_3$ denote a geodesic from the endpoint of $y_3$ to the start of $h(y_3)$. Then $|x_1|, |x_3| \leq J$.
Since $y_1 y_2$ is a geodesic
and $h(y_1)$ fellow-travels with $y_2$, we see that in the path $y_1 x_1 h(y_1)$ there is a uniform upper bound
 on the amount of $(2\delta)$--fellow-traveling between the terminal subpath of $y_1$ and the initial subpath of $h(y_1)$.
Thus, in the notation of \Cref{Lem:InfiniteWord}, we  conclude that  $(x_1 y_1)^\infty$ is an $\eta$--quasigeodesic  for some $\eta  = \eta( \delta, J) >0$. 
Likewise,
 $(y_3 x_3)^\infty$ is an $\eta$--quasigeodesic.
These quasigeodesics fellow-travel, since they are periodically joined by translates of $y_2$. See \Cref{Fig:ConePieceBigOverlap}.

\begin{figure}

\begin{overpic}{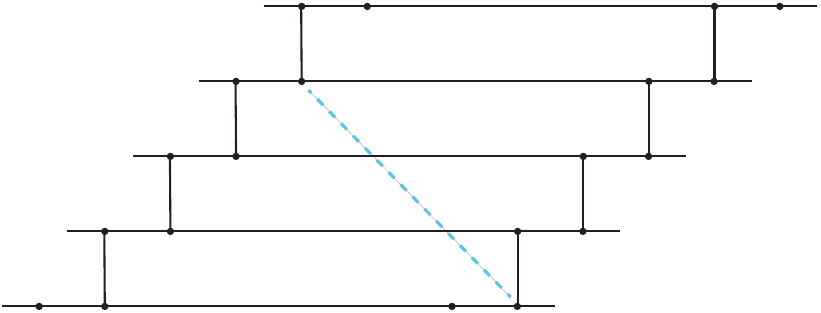}
\put(7,-1.5){$y_1$}
\put(32,-1.5){$y_2$}
\put(58,-1.5){$y_3$}
\put(9.5,4){$x_1$}
\put(60,4){$x_3$}
\put(14,7.5){$hy_1$}
\put(39,7.5){$hy_2$}
\put(65,7.5){$hy_3$}
\put(16,13){$h x_1$}
\put(66.5,13){$h x_3$}
\put(22, 16){$h^2 y_1$}
\put(53, 16){$h^2 y_2$}
\put(73, 16){$h^2 y_3$}
\put(23, 23){$h^2 x_1$}
\put(73, 23){$h^2 x_3$}
\put(30, 25){$h^3 y_1$}
\put(55, 25){$h^3 y_2$}
\put(81, 25){$h^3 y_3$}
\put(47,13){$\delta'$}

\end{overpic}

\caption{Bottom: $y_1 y_2$ is a geodesic, and $h y_1$ fellow-travels $y_2$. Hence there is a bounded amount of fellow-travelling between $y_1$ and $h y_1$. This implies $(y_1 x_1)^\infty$ is a quasigeodesic, and similarly for $(y_3 x_3)^\infty$. The two quasigeodesics must $\delta'$--fellow-travel, hence there is 
a geodesic of length $\delta'$ from the endpoint of $y_3$ to some point on $(y_1 x_1)^\infty$.}
\label{Fig:ConePieceBigOverlap}
\end{figure}

Let $\delta' = \delta'(\delta, J)$ be a uniform constant such that $\eta(\delta,J)$--quasigeodesic quadrilaterals in $\widetilde X$ must be $\delta'$--thin. Then it follows that $(x_1 y_1)^\infty$ and $(y_3 x_3)^\infty$ are $\delta'$--fellow-travelers. Since the endpoint of $y_3$ is $\delta'$--close to $(x_1 y_1)^\infty$,
we have an $\eta$--quasigeodesic triangle with sides consisting of a sub-path of $(x_1 y_1)^\infty$, a copy of $(y_2 y_3)$, and a geodesic segment of length at most $\delta'$ (shown dashed in  \Cref{Fig:ConePieceBigOverlap}). Since the dashed segment is $\delta'$-short, all of $y_2 y_3$ is $2 \delta'$-close to $(x_1 y_1)^\infty$.  Since  $|x_1|  \leq J$, all of $y_2 y_3$ is determined by $y_1$ and a universally bounded amount of data.
Since $| y_2 y_3 | \geq d$, the same argument as in  \Cref{Lem:PieceNonGeneric}.\eqref{Itm:SelfPieceSmallOverlap}, shows that the probability of a $J$--loose cone-piece with this configuration is at most $M_3 \ell^2  e^{-b d}$, for a constant $M_3 = M_3 (X, J)$.
 
 \smallskip
 
\textbf{Case 2}: $|s| > \frac{1}{2} |w|$. By the warm-up argument at the beginning of the proof, we may assume $d \leq \frac{1}{2} |w| < |s|$. As in Case 1, we may assume without loss of generality that the overlap $u$ occurs at the beginning of $s$. 
Let $\mathring s$ be the terminal sub-geodesic of $s$, of length  $| \mathring s| = d$. By \Cref{Lem:ThinQuad}, there is a sub-geodesic $\mathring s' \subset s'$ whose endpoints are $\mathring J$--close to those of $\mathring s$, for $\mathring J = J + 2\delta$. Let $\mathring u = \mathring s \cap h^{-1} (\mathring s')$. We think of $\mathring s$ as a sub-piece and $\mathring u$ as a sub-overlap.

If $|\mathring u| \leq R = R(J, \delta)$, observe that $\mathring s$ and $\mathring v' = \mathring s' \setminus h(\mathring u)$ project to disjoint portions of the closed--geodesic $w$, and that $\mathring v'$ is determined by $\mathring s$ and a universally bounded amount of extra data. Thus, by \Cref{Lem:PieceNonGeneric}.\eqref{Itm:SelfPieceSmallOverlap}, the probability of a $J$--loose cone-piece containing this configuration is at most $M_4 \ell^2  e^{-b d}$, where $M_4 = M_4 (X, J) = M_0(X, \mathring J, R(J,\delta))$ in the notation of \Cref{Lem:PieceNonGeneric}.

If $|\mathring u| \geq R = R(J, \delta)$, we employ the argument of Case 1, with $\mathring J$ in place of $J$, to show that the probability of a $J$--loose cone-piece with this configuration is at most $M_5 \ell^2  e^{-b d}$, for a constant $M_5 = M_5 (X, J)$.

\smallskip

To complete the proof, recall the constants $M_1, \ldots, M_5$ defined above, so that the probability of a $J$--loose cone-piece of each type is bounded by $M_j \ell^2  e^{-b d}$ for the appropriate $M_j$. Setting $M = \max_{j} M_j$ completes the proof.
\end{proof}

\begin{prop}\label{Prop:FewConePieces}
Suppose $X$ is compact and $\widetilde X$ is hyperbolic.
Fix constants $J \in \naturals$ and $\phi \in (0,1)$ and $C > 2/b(1-\phi)$. Then, for all sufficiently large $\ell$ and for all $d \in [C \log \ell, \, \ell/3]$, the following holds.

Suppose that conjugacy classes $[g],[g']$ are chosen  at random from among those of length at most $\ell$. Then there is an $e^{-bd\phi}$ upper bound on the probability that there is a $J$--loose piece of diameter at least $d$ between the representative closed--geodesics.

Similarly, for a randomly chosen conjugacy class $[g]$ of length at most $\ell$, there is an $e^{-bd\phi}$ upper bound on the probability that there is a $J$--loose piece of diameter at least $d$ between the representative closed--geodesic and itself.
\end{prop}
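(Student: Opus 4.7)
The plan is to deduce this proposition directly from \Cref{Lem:PieceNonGeneric} and \Cref{Lem:SelfPieceBigOverlap}, absorbing the polynomial prefactor $\ell^2$ and the multiplicative constant $M$ into the exponential $e^{-bd\phi}$ using the assumption that $d \geq C \log \ell$ with $C$ strictly greater than $2/(b(1-\phi))$.

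First I would check that the hypotheses on $d$ align. For $\ell$ sufficiently large, $C \log \ell > 2J+1$, so the range $d \in [C \log \ell, \ell/3]$ lies inside both $\{d > 2J\}$ (the hypothesis of \Cref{Lem:PieceNonGeneric}) and $[2J+1, \ell/3]$ (the hypothesis of \Cref{Lem:SelfPieceBigOverlap}). For the two-class case, \Cref{Lem:PieceNonGeneric}.\eqref{Itm:SeparateSample} bounds the probability of a $J$--loose piece of diameter at least $d$ between the representative geodesics of independently sampled $[g], [g']$ by $M \ell^2 e^{-bd}$, where $M = M(X, J)$. For the self-piece case, \Cref{Lem:SelfPieceBigOverlap} gives the same form of bound $M' \ell^2 e^{-bd}$ with $M' = M'(X, J)$. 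Taking $M_\ast = \max\{M, M'\}$, both probabilities are bounded by $M_\ast \ell^2 e^{-bd}$.

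Next I would verify that $M_\ast \ell^2 e^{-bd} \leq e^{-bd\phi}$ for all $\ell$ large enough and all $d \geq C \log \ell$. Taking logarithms, the inequality is equivalent to
\[
\log M_\ast + 2 \log \ell \: \leq \: b d (1-\phi).
\]
Since $C > 2/(b(1-\phi))$, there is $\epsilon > 0$ with $Cb(1-\phi) \geq 2 + \epsilon$. Then for $d \geq C \log \ell$ we have
\[
bd(1-\phi) \: \geq \: Cb(1-\phi) \log \ell \: \geq \: 2 \log \ell + \epsilon \log \ell,
\]
and the inequality holds as soon as $\ell$ is large enough that $\epsilon \log \ell \geq \log M_\ast$. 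This completes both parts of the proposition.

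The main content of the proposition lies in the previous two lemmas; the present statement is essentially a packaging step that trades a $\ell^2$ prefactor and a multiplicative constant for a slight weakening of the exponential rate from $bd$ to $bd\phi$, which will be the form used when estimating the density threshold in the proof of \Cref{Thm:main}. The only subtle point is the sharp choice of the constant $C > 2/(b(1-\phi))$, which is exactly what is needed to absorb the $\ell^2$ term while preserving an exponential decay at rate $bd\phi$; consequently, no additional geometric argument is required here.
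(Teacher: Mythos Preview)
Your proposal is correct and follows essentially the same approach as the paper: combine the bounds from \Cref{Lem:PieceNonGeneric}.\eqref{Itm:SeparateSample} and \Cref{Lem:SelfPieceBigOverlap}, then absorb the factor $M_\ast \ell^2$ into the exponential using $d \geq C\log\ell$ with $C > 2/(b(1-\phi))$. Your verification that $C\log\ell > 2J+1$ for large $\ell$ (so that the earlier lemmas apply) is a detail the paper leaves implicit, but otherwise the arguments are the same.
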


\begin{proof}
Let  $M = M(X,J)$ be the larger of the two constants in \Cref{Lem:PieceNonGeneric}.\eqref{Itm:SeparateSample} and \Cref{Lem:SelfPieceBigOverlap}. Combining the two lemmas, we see that a $J$--loose cone-piece of diameter at least $d$ in the closed--geodesic representing $[g]$ occurs with probability at most $M \ell^2 \,  e^{-bd}$.

Now, let $C > \frac{2}{b(1-\phi)}$ be as in the statement of the lemma. Then,  choosing $\ell$ large ensures that the additive difference $\big( C \log \ell - \frac{ 2\log \ell}{b(1-\phi)}  \big)$ is as large as we like. In particular, for $\ell \gg 0$ and  $d \geq C \log \ell$,
we have
\[
d \geq C \log \ell > \frac{2 \log \ell }{b (1-\phi) } + \frac{ \log M }{b (1-\phi) } = \frac{\log(M \ell^2)}{b (1-\phi) }.
\]
After exponentiating and rearranging terms, we obtain
\[
M \ell^2  < e^{bd(1-\phi)}, \qquad \text{hence} \qquad M \ell^2 e^{-bd} <  e^{-b \phi d }. \qedhere
\]
\end{proof}

\subsection{Controlling pieces among many conjugacy classes}\label{Sec:MainProof}

We can now apply \Cref{Prop:FewConePieces} to bound the probability of a large $J$--loose cone-piece between a pair of conjugacy classes that are sampled from a large collection, where ``large'' is defined as a fraction of the systole. See \Cref{Prop:ConePieceSystole}. We can then combine this result with \Cref{Prop:WallPieceControl} to prove the main theorem.

\begin{lem}\label{Lem:ConePieceDiameter}
Fix constants $q \in \big( 0,\frac{1}{6} \big]$
 and $J \in \naturals$. Let $k \leq e^{c  \ell}$, where $c< q b$ and $b$ is the growth exponent of $\widetilde X$.
Select conjugacy classes $[g_1], \ldots, [g_k]$ at random from among those of length at most $\ell$, with each $[g_i]$ represented by a closed--geodesic $w_i$.

Then, with overwhelming probability as $\ell \to \infty$, all $J$--loose cone-pieces among the $w_i$
have diameter strictly less than $2 q \ell$.
\end{lem}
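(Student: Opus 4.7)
The plan is to apply \Cref{Prop:FewConePieces} with diameter threshold $d = 2q\ell$ to each pair $(i,j)$ of indices (including the diagonal), and then take a union bound over the at most $k^2$ such pairs.

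First, I would choose the parameters that will feed into \Cref{Prop:FewConePieces}. Since the hypothesis gives $c < qb$, the ratio $c/(bq)$ lies strictly in $(0,1)$, so I can pick a number $\phi \in (0,1)$ with
\[
\frac{c}{b q} \: < \: \phi \: < \: 1,
\]
which is equivalent to $2c < 2 b q \phi$. Next, I choose any $C > 2/(b(1-\phi))$, as required by \Cref{Prop:FewConePieces}. Because $q \leq \frac{1}{6}$, we have $d = 2q\ell \leq \ell/3$, and for $\ell$ sufficiently large we also have $d = 2 q \ell \geq C \log \ell$, so $d$ lies in the admissible interval $[C\log \ell,\, \ell/3]$.

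With these choices, \Cref{Prop:FewConePieces} gives, for each pair $(i,j)$ with $i \neq j$, an upper bound of $e^{-b d \phi} = e^{-2 b q \phi \, \ell}$ on the probability that $w_i$ and $w_j$ share a $J$--loose cone-piece of diameter $\geq 2q\ell$; the same bound applies when $i = j$ for a $J$--loose cone-piece of $w_i$ with itself. A union bound over the at most $k^2 \leq e^{2c\ell}$ such (ordered) pairs yields
\[
\Pr\bigl[\text{some } J\text{--loose cone-piece among the } w_i \text{ has diameter} \geq 2q\ell\bigr] \: \leq \: k^{2}\, e^{-2 b q \phi \, \ell} \: \leq \: e^{(2c - 2 b q \phi)\ell}.
\]
Since $2c - 2 b q \phi < 0$ by the choice of $\phi$, this probability tends to $0$ as $\ell \to \infty$, which is the desired overwhelming-probability statement.

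There is no real obstacle here beyond correctly balancing the constants: the delicate point is that $c < qb$ only barely leaves room to insert a $\phi < 1$ between $c/(bq)$ and $1$, and one must then verify that the resulting $C$ from \Cref{Prop:FewConePieces} is compatible with the upper bound $d = 2q\ell \leq \ell/3$ (which is where the hypothesis $q \leq \frac{1}{6}$ is used). Once that bookkeeping is done, \Cref{Prop:FewConePieces} does all the geometric work and the lemma follows from a single union bound.
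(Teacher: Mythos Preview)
Your proof is correct and follows essentially the same approach as the paper: apply \Cref{Prop:FewConePieces} with $d = 2q\ell$ to each (ordered) pair of indices and take a union bound over the $k^2$ pairs. The only cosmetic difference is that the paper chooses $\phi$ so that $c < qb\phi^2$ and then bounds $k^2 e^{-bd\phi}$ via $e^{(2q\phi\ell - d)b\phi}$, whereas you choose $\phi$ with $c < qb\phi$ and bound directly by $e^{(2c - 2bq\phi)\ell}$; both choices work and your version is slightly cleaner.
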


\begin{proof}
Since $c< q b$, we may choose a constant  $\phi \in (0,1)$  such that $c < q b \phi^2$.
We will consider pieces of diameter at least $d\geq 2 q \ell$. Since $d$ is bounded below by a linear function of $\ell$,  the logarithmic hypothesis on $d$ in
\Cref{Prop:FewConePieces} is satisfied  for $\ell \gg 0$.

The above choices imply that there are $k^2 \leq e^{2c  \ell} < e^{2 q b  \phi^2 \ell}$ pairs of indices $(i,j)$. When $\ell$ is sufficiently large,  \Cref{Prop:FewConePieces} says that  for every pair $(i,j)$, the probability of a $J$--loose cone-piece of diameter at least $d$ between $w_i$ and $w_j$ is less than $e^{-bd\phi}$. 
(This includes the case $w_i = w_j$.)
Thus the total probability that some pair has a $J$--loose cone-piece of diameter $ \geq d$ is less than
\[
k^2 e^{-bd\phi}  < e^{2 q b  \phi^2 \ell} e^{-bd\phi}
= e^{ (2 q \phi \ell -d  ) \, b \phi    }.
\]
Recall that $d \geq 2 q  \ell$ and $\phi \in (0, 1)$.
Then, as $\ell \to \infty$, the exponent in the above probability estimate is bounded as follows:
\[
(2 q \phi \ell -d  ) \, b \phi   \leq  ( 2 q  \phi \ell -  2 q  \ell  ) \, b \phi   = (\phi - 1) \cdot 2q \phi \cdot b \ell \longrightarrow - \infty.
\]
We conclude that with overwhelming probability as $\ell \to \infty$, there are no $J$--loose pieces of diameter  $d \geq 2 q  \ell$.
\end{proof}

\begin{lem}\label{Lem:Systole}
Select conjugacy classes $[g_1], \ldots, [g_k]$ at random from among those of length at most $\ell$, where
$ k \leq e^{c\ell}$ for some constant $c < q b$ where $0<q<1$. Then, with overwhelming probability as $\ell \to \infty$, we have:
 $\min\{ | g_i | : 1 \leq i \leq k \} \geq (1-q)\ell$.
\end{lem}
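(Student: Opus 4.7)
The plan is a straightforward union bound argument driven by the growth estimates from \Cref{Thm:ConjugacyGrowth} and \Cref{Rem:NonPrimitive}.

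First, I would quantify the sample space $\mathcal{G}(\ell)$. \Cref{Thm:ConjugacyGrowth}.\eqref{Itm:ConjugacyGrowth} gives a lower bound on primitive conjugacy classes, $p_\ell \geq A e^{b\ell}/\ell$ for $\ell \geq n_0$, and hence
\[
|\mathcal{G}(\ell)| \geq p_\ell \geq \frac{A}{\ell} e^{b\ell}.
\]
Combining the upper bound $p_n \leq Be^{bn}$ with the estimate from \Cref{Rem:NonPrimitive} on the number of non-primitive classes yields a constant $B' = B'(X)$ so that $|\mathcal{G}(n)| \leq B' e^{bn}$ for $n \gg 0$.

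Second, I would estimate the probability that a single uniformly sampled class $[g_i]$ has translation length strictly less than $(1-q)\ell$. This probability is $|\mathcal{G}(\lfloor (1-q)\ell \rfloor)|/|\mathcal{G}(\ell)|$, which the two bounds above control by
\[
\frac{B' e^{b(1-q)\ell}}{(A/\ell) e^{b\ell}} \: = \: \frac{B'}{A} \, \ell \, e^{-qb\ell}.
\]

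Finally, I would apply a union bound over the $k \leq e^{c\ell}$ independently sampled classes. The probability that at least one $[g_i]$ has translation length less than $(1-q)\ell$ is therefore at most
\[
k \cdot \frac{B'}{A} \ell \, e^{-qb\ell} \: \leq \: \frac{B'}{A} \, \ell \, e^{(c-qb)\ell},
\]
which tends to $0$ as $\ell \to \infty$, because $c < qb$ by hypothesis. The only mildly delicate point in the argument is passing from the primitive conjugacy class counts of \Cref{Thm:ConjugacyGrowth} to counts of all non-trivial conjugacy classes, but this is absorbed into the constant $B'$ by \Cref{Rem:NonPrimitive}; no hypothesis on $q$ beyond $q \in (0,1)$ is needed.
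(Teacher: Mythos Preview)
Your argument is correct and follows the same approach as the paper: bound the probability that a single sampled class is short by comparing $|\mathcal G((1-q)\ell)|$ to $|\mathcal G(\ell)|$ via \Cref{Thm:ConjugacyGrowth}, then apply a union bound over the $k\le e^{c\ell}$ samples. The paper simply cites \Cref{Thm:ConjugacyGrowth}.\eqref{Itm:ConjugacyGrowth} for the upper bound $|\mathcal G((1-q)\ell)|\le B e^{b(1-q)\ell}$ without explicitly separating out the non-primitive classes as you do via \Cref{Rem:NonPrimitive}; your extra care here is harmless and arguably cleaner.
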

\begin{proof}
Let $w_i$ be a closed--geodesic representing $[g_i]$.
By \Cref{Thm:ConjugacyGrowth}.\eqref{Itm:ConjugacyGrowth}, the conditional probability that $|w_i|\leq (1-q) \ell$ given that
$|w_i| \leq \ell$ is
\[
\frac{\mathcal{G}((1-q)\ell)}{\mathcal{G}(\ell)} \leq 
\frac{Be^{b (1-q) \ell}}{A e^{b\ell}/ \ell} = \frac{B}{A}\ell e^{-bq\ell}.
\]
Thus the conditional probability that   $|w_i|<(1- q) \ell$ for some $1\leq i\leq k$
is bounded above by $( k\frac{B\ell}{A} )e^{-b q\ell}$. This upper bound approaches zero
exponentially quickly when $k\leq e^{c\ell}$ and $c< q b$.
\end{proof}

\begin{prop}\label{Prop:ConePieceSystole}
Fix $J \in \naturals$ and $\alpha \in \big( 0, \frac{2}{5} \big]$.
Suppose conjugacy classes $[g_1], \ldots, [g_k]$ are chosen at random from among all those of length at most $\ell$. Assume that $k \leq e^{c\ell}$ for some constant $c < b\alpha/(\alpha+2)$. Then, with overwhelming probability as $\ell \to \infty$,
all $J$--loose cone-pieces among the closed--geodesics representing  $[g_1], \ldots, [g_k]$ 
have diameter strictly less than $ \alpha |g_i |$ for every $i$.
\end{prop}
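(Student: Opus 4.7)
The plan is to choose an auxiliary constant $q$ that lets us invoke \Cref{Lem:ConePieceDiameter} and \Cref{Lem:Systole} simultaneously, and then chain their conclusions to convert an additive bound on cone-piece diameter into a multiplicative bound relative to the systole $|g_i|$.

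First I would choose a constant $q$ satisfying
\[
\frac{c}{b} \: < \: q \: < \: \frac{\alpha}{\alpha+2}.
\]
Such a $q$ exists by the hypothesis $c < b\alpha/(\alpha+2)$. Since $\alpha \leq 2/5$, we have $\alpha/(\alpha+2) \leq 1/6$, so this $q$ automatically lies in $(0, 1/6]$, as required by \Cref{Lem:ConePieceDiameter}. Note also that $q < 1$, so \Cref{Lem:Systole} applies as well.

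Next, applying \Cref{Lem:ConePieceDiameter} with this $q$ (and with the given $J$), we conclude that with overwhelming probability as $\ell \to \infty$, every $J$--loose cone-piece among the closed--geodesics $w_1, \ldots, w_k$ has diameter strictly less than $2q\ell$. Simultaneously, \Cref{Lem:Systole} gives that with overwhelming probability, $|g_i| \geq (1-q)\ell$ for every $i$. Since the intersection of finitely many events of overwhelming probability still has overwhelming probability, both conclusions hold together.

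Finally, the inequality $q < \alpha/(\alpha+2)$ rearranges to $2q < \alpha(1-q)$. Therefore, on the good event described above, every $J$--loose cone-piece $s$ among the $w_i$ satisfies
\[
\diameter(s) \: < \: 2q\ell \: < \: \alpha(1-q)\ell \: \leq \: \alpha |g_i|
\]
for every $i$, which is exactly the desired conclusion. The main content is really just arranging the arithmetic so that one $q$ serves both lemmas; no genuine obstacle arises, since the hypothesis on $c$ was tailored precisely for this pairing, and the bound $\alpha \leq 2/5$ ensures the auxiliary constraint $q \leq 1/6$ from \Cref{Lem:ConePieceDiameter} is automatically met.
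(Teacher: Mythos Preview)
Your proof is correct and follows essentially the same approach as the paper: invoke \Cref{Lem:ConePieceDiameter} and \Cref{Lem:Systole} with a common value of $q$, then combine the bounds via the identity $2q \leq \alpha(1-q)$. The only cosmetic difference is that the paper takes the boundary value $q = \alpha/(\alpha+2)$ (so that $2q = \alpha(1-q)$ exactly), whereas you pick $q$ strictly below this threshold; both choices work equally well.
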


\begin{proof}
We will use Lemmas~\ref{Lem:ConePieceDiameter} and~\ref{Lem:Systole} with $q = \frac{\alpha}{\alpha+2}$. Observe that the hypothesis $\alpha \leq \frac{2}{5}$ implies $q \leq \frac{1}{6}$, as required for \Cref{Lem:ConePieceDiameter}.

With this value of $q$, \Cref{Lem:ConePieceDiameter} says that with overwhelming probability as $\ell \to \infty$, all cone-pieces have diameter strictly less than $ \frac{2 \alpha}{\alpha+2} \ell $.
Meanwhile, \Cref{Lem:Systole} says that with overwhelming probability as $\ell \to \infty$, $\min_{i=1}^k \{|g_i | \}   \geq (1-q) \ell = \frac{2}{\alpha+2} \ell$.

Putting together the last two results, we conclude that the diameter of every cone-piece in $w_i$ is less than $\alpha |w_i | $.
\end{proof}

In the same spirit as \Cref{Lem:Systole}, we have

\begin{lem}\label{Lem:Primitive}
Suppose $k \leq e^{c \ell}$ for some constant $c < b/2$. Then, with overwhelming probability as $\ell \to \infty$, a set of $k$ randomly chosen conjugacy classes has the property that each one is primitive.
\end{lem}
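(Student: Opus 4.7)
The plan is to follow the same union-bound template used in \Cref{Lem:Systole}, but with \Cref{Rem:NonPrimitive} playing the role of the tail estimate. First I would invoke \Cref{Rem:NonPrimitive}, which already provides exactly the quantitative input we need: the proportion of non-primitive conjugacy classes among those of translation length at most $\ell$ is bounded above by $\frac{\ell^2 B}{A} e^{-b \ell /2}$, where $A,B$ are the constants from \Cref{Thm:ConjugacyGrowth}.\eqref{Itm:ConjugacyGrowth}. Equivalently, for a single conjugacy class $[g_i]$ drawn uniformly from $\mathcal G(\ell)$,
\[
\Pr\bigl([g_i] \text{ is non-primitive}\bigr) \: \leq \: \frac{B}{A} \, \ell^2 \, e^{-b \ell /2}.
\]

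Next, I would apply a union bound over the $k$ independently chosen conjugacy classes $[g_1],\ldots,[g_k]$:
\[
\Pr\bigl(\text{some } [g_i] \text{ is non-primitive}\bigr) \: \leq \: k \cdot \frac{B}{A} \, \ell^2 \, e^{-b \ell /2} \: \leq \: \frac{B}{A} \, \ell^2 \, e^{(c - b/2)\ell}.
\]
Since $c < b/2$ by hypothesis, the exponent $(c - b/2)\ell$ tends to $-\infty$ linearly in $\ell$, and so this upper bound decays to $0$ exponentially fast, dominating the polynomial factor $\ell^2$. Thus with overwhelming probability, every $[g_i]$ is primitive.

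There is essentially no obstacle here, since \Cref{Rem:NonPrimitive} has already done the combinatorial work of estimating the tail of non-primitives, and the only remaining step is a routine union bound together with the observation that the exponential gap between $c$ and $b/2$ is exactly what is needed. The only mild subtlety is ensuring $\ell$ is large enough for \Cref{Thm:ConjugacyGrowth} to apply (i.e.\ $\ell \geq n_0$), which is automatic under the convention that $\ell \to \infty$.
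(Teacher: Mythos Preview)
Your proof is correct and essentially identical to the paper's own argument: invoke \Cref{Rem:NonPrimitive} for the single-class tail bound $\tfrac{B}{A}\ell^2 e^{-b\ell/2}$, then apply a union bound over the $k \leq e^{c\ell}$ classes to obtain $\tfrac{B}{A}\ell^2 e^{(c-b/2)\ell} \to 0$ since $c < b/2$.
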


\begin{proof}
By \Cref{Rem:NonPrimitive}, the probability that $[g_i]$ is non-primitive is bounded above by
\[
\frac{\ell^2 B}{A} e^{-b\ell/2} .
\]
Thus the probability that a $k$--tuple of conjugacy classes contains a non-primitive class is at most
\[
k \frac{ \ell^2 B}{A} e^{-b \ell/2} \leq \frac{ \ell^2 B}{A} e^{(c-b/2)\ell},
\]
which goes to $0$ as $\ell \to \infty$ because $(c-b/2) < 0$.
\end{proof}


We can now restate and prove \Cref{Thm:main}.

\begin{named}{Theorem~\ref{Thm:main}}
Let $G=\pi_1X$, where $X$ is a compact non-positively curved cube complex, and suppose that $G$ is hyperbolic.
Let $b$ be the growth exponent of $G$ with respect to $\widetilde X$,  
and let $a$ be the maximal growth exponent of a stabilizer of an essential hyperplane of $\widetilde X$.
Let $k\leq  e^{c \ell}$, where
\[ c<\min \left\{ \frac{(b-a)}{20}, \,  \frac{b}{41} \right\} .
\]
Then with overwhelming probability as $\ell \to \infty$, for any set of conjugacy classes
$[g_1], \ldots, [g_k]$ with each $|g_i| \leq \ell$,
the group $\overline{G} = G/ \nclose{g_1, \ldots, g_k}$ is hyperbolic and is the fundamental group of a compact, non-positively curved cube complex.
\end{named}

\begin{proof}
We may assume that $G$ is non-elementary; otherwise, $a=b=0$ and the theorem holds vacuously.
We begin by replacing $X$ with a closely related cube complex that satisfies \Cref{Conv:NiceX}.
First,  we perform a cubical subdivision of $X$, while retaining the original metric.
Then every conjugacy class  $[g] \subset G$ can be assigned a closed--geodesic representative $w \rightarrow X$. 
Second, if some hyperplane of $X$ is inessential, we replace $X$ by its essential core, as in \cite[Proposition 3.5]{CapraceSageev2011}, which has the same growth exponents $a$ and $b$.

For every relator $[g_i]$ in the statement of the theorem, let $w_i \to X$ be the chosen closed--geodesic. Let $\widetilde w_i \subset \widetilde X$ be a geodesic axis that covers $w_i$, stabilized by $g_i \in [g_i]$. For each $\widetilde w_i$, let 
$\widetilde Y_i = \hull(\widetilde w_i) \subset \widetilde X$,
whose quotient $Y_i =  \langle g_i \rangle \backslash \widetilde Y_i$ admits a local isometry into $X$.
By \Cref{Lem:HullUniformThickness},  $\widetilde Y_i$ lies in a uniform neighborhood of $\widetilde w_i$, hence its quotient $Y_i$ is compact and a quasi-circle. By construction, every hyperplane of $\widetilde Y_i = \hull(\widetilde w_i)$ cuts $\widetilde w_i$. 
This gives us a cubical presentation  $X^* = \langle X \mid Y_1,\ldots, Y_k\rangle$ such that $\pi_1 X^* = \overline{G}$.

Define $J = J(\widetilde X)$ as in \Cref{Lem:LoosenUp}. Then every diameter $ \geq d$ cone-piece of $X^*$ between $Y_i$ and $Y_j$ corresponds to a $J$--loose cone-piece between $w_i$ and $w_j$, of diameter  $\geq d$. Similarly, every diameter $ \geq d$ wall-piece of $X^*$ in $w_i$ corresponds to a $J$--loose wall-piece, also of diameter  $\geq d$.

The hypotheses
 on $c$ allow us to choose a constant $\alpha < \frac{1}{20}$, such that $c < \alpha (b-a)$ and $c < \frac{b \alpha}{\alpha + 2}$.
 Indeed, $c < \frac{1}{20} (b-a)$ and
 \(
 c < \frac{b/20}{1/20 + 2} = \frac{b}{20} \cdot \frac{20}{41} .
 \)
 Then Propositions~\ref{Prop:WallPieceControl} and \ref{Prop:ConePieceSystole} ensure that with overwhelming probability as $\ell \to \infty$,  the $J$--loose (wall or cone) pieces in every $w_i$ have diameter strictly less than $\alpha |w_i| = \alpha \systole{Y_i}$. Thus, by \Cref{Lem:LoosenUp},
  the wall-pieces and cone-pieces
 in a relator $Y_i$ of $X^*$ also
have diameter strictly less than $\alpha \systole{Y_i}$. Thus $X^*$ satisfies the $C'(\alpha)$ small-cancellation condition. 
Since $\alpha < \frac{1}{14}$, \Cref{lem:1/14 hyperbolic} ensures that $\pi_1X^*$ is hyperbolic. 

Next, we check the hypotheses of \Cref{Thm:C'20Proper}. We have verified that with overwhelming probability, $X^*$ is $C'(\frac{1}{20})$ and that every $Y_i$ is a compact quasi-circle that deformation retracts to a closed--geodesic $w_i$.
By \Cref{Lem:LoosenUp}.\eqref{Itm:HyperplaneInY}, every hyperplane  $U \subset Y_i$ has a carrier $N(U)$ of diameter strictly less than $\alpha \systole{Y_i}$, which implies that $N(U)$ is embedded. Since $U$ must cut the closed--geodesic $w_i$, we also conclude that $Y_i \setminus U$ is contractible. Finally,  \Cref{Lem:Primitive} implies that with overwhelming probability, every $w_i$ is  primitive. 
Therefore,  \Cref{Thm:C'20Proper} ensures that $\pi_1 X^*$ acts freely and cocompactly on the CAT(0) cube complex dual to the wallspace on $\widetilde{X^*}$. 
\end{proof}

\section{Generalization to other metric spaces}\label{Sec:OtherMetric}

In this section, we prove \Cref{Thm:OtherSample}, which generalizes \Cref{Thm:main} to the setting of groups acting on other, non-cubical metric spaces. The idea is to prove a probabilistic statement for quotients $\overline{G} = G/ \nclose{g_1, \ldots, g_k}$ where the relators $g_i$ are sampled from all short conjugacy classes in a $G$--action on some metric space $\Upsilon$. As discussed in the introduction, there are many situations (for instance, hyperbolic manifolds) where the most natural geometry associated to a group $G$ is carried by some metric space other than a cube complex. The results of this section enable us to draw conclusions using the growth of $\Upsilon$ rather than the growth of a cube complex.

\subsection{Cube-free definitions and results}

\begin{conv}\label{Conv:NiceUpsilon}
The following assumptions and terminology hold throughout this section. Let $G$ be a nonelementary, torsion-free group acting properly and cocompactly on a $\delta$--hyperbolic geodesic metric space $\Upsilon$. 
Let $H_1, \ldots, H_m$ be a collection of infinite index quasiconvex subgroups of $G$, which will remain fixed for the rest of this section. (In the main case of interest, the $H_i$ are hyperplane stabilizers of some action of $G$ on 
a CAT(0) cube complex $\widetilde X$.) Fix a basepoint $\upsilon \in \Upsilon$.

We assume that every nontrivial element $g \in G$ stabilizes a geodesic axis $\widetilde \gamma \subset \widetilde \Upsilon$. 
For every conjugacy class $[g]$, we choose a representative closed--geodesic $\gamma \subset G \backslash \Upsilon$. Then every $g \in [g]$ stabilizes some preimage $\widetilde \gamma$ of $\gamma$.
\end{conv}

\begin{defn}[Loose pieces in $\Upsilon$]\label{Def:LoosePieceOtherMetric}
Fix a constant $J > 0$. Consider bi-infinite geodesics $\widetilde \gamma, \widetilde \gamma' \subset \Upsilon$ that do not share an endpoint in $\bdy  \Upsilon$. Observe that $\widetilde \gamma \cap \neb_{J} (\widetilde \gamma')$ is a closed set because $\neb_{J} (\widetilde \gamma')$ is a closed neighborhood, and is bounded because the geodesics do not share an endpoint. Thus $\widetilde \gamma \cap \neb_{J} (\widetilde \gamma')$ is compact.
A \emph{$J$--loose cone-piece between $\widetilde \gamma$ and $\widetilde \gamma'$} is the maximal geodesic segment $s \subset \widetilde \gamma$ whose endpoints lie in $\neb_{J} (\widetilde \gamma')$. 
The \emph{companion of $s$} is the maximal segment $s'  \subset \widetilde \gamma'$ whose endpoints are at distance $J$ from the corresponding endpoints of $s$. 

Now, let $\gamma, \gamma'$ be closed--geodesics in $G \backslash \Upsilon$. Then a \emph{$J$--loose cone-piece} between $\gamma$ and $\gamma'$ is the projection of a $J$--loose cone-piece between arbitrary preimages $\widetilde \gamma$ and $\widetilde \gamma'$, excluding the case where $\widetilde \gamma = \widetilde \gamma'$.

A \emph{$J$--loose wall-piece in $\widetilde \gamma$} is a maximal geodesic segment  $s \subset \widetilde{\gamma}$ whose endpoints are contained in $\neb_J (g H_i p)$ for one of the chosen subgroups $H_i$ and for some $g \in G$. A \emph{$J$--loose wall-piece in a closed--geodesic $\gamma$} is the projection of a $J$--loose wall-piece in $\widetilde \gamma$.
\end{defn}

\begin{defn}[Cube-free $C'(\alpha)$ presentations]
\label{Def:AuxiliarySmallCancel}

Let $\Upsilon$, $\delta$, $\upsilon$, $G$, and $H_1, \ldots, H_m$ be as in \Cref{Conv:NiceUpsilon}.
Let $\kappa_j$ be the quasiconvexity constant of the orbit $H_j \upsilon$, and let $\kappa = \max_j \{ \kappa_j \} + 2 \delta$. 

Let $g_1, \ldots, g_k$ be infinite-order elements of $G$. For each $g_i$, let $\gamma_i \to G \backslash \Upsilon$ be the chosen closed--geodesic representing $[g_i]$. Since each  $g_i$ stabilizes an axis $\widetilde \gamma_i \subset \Upsilon$, we have $|\gamma_i| = |g_i|_{{}_\Upsilon} = \stabletrans{g_i}_{{}_\Upsilon}$ in \Cref{Def:TransLength}.

The presentation $\langle G : H_1, \ldots, H_m \mid g_1, \ldots, g_k \rangle$ is called \emph{$C'(\alpha)$ with respect to $(\Upsilon, \upsilon, \gamma_1, \ldots, \gamma_k)$} if
\begin{enumerate}[\:\: $(1)$]
\item\label{Itm:GeneralConePiece} For every $g_i, g_j$, the diameter of any $2\delta$--loose cone-piece between $\gamma_i$ and $\gamma_j$ is less than  $ \alpha \stabletrans{g_i}_{{}_\Upsilon}$.

\item\label{Itm:GeneralWallPiece} For every $g_i$, any $\kappa$--loose wall-piece in $\gamma_i$ has diameter 
less than  $ \alpha \stabletrans{g_i}_{{}_\Upsilon}$.
\end{enumerate}
\end{defn}

The results of \Cref{Sec:Pieces} have the following generalization to this context. 

\begin{prop}\label{Prop:GeneralizedConePiece}
Let $G$, $\Upsilon$, and $\{ H_i \}$ be as in \Cref{Def:AuxiliarySmallCancel}.
Let $b$ be the growth exponent of $G$ acting on $\Upsilon$.
Consider conjugacy classes $[g_1], \ldots, [g_k]$ in $G$, chosen uniformly from all conjugacy classes of $\Upsilon$--length at most $\ell$. Suppose $ k \leq e^{c\ell}$ for some constant $c < b\alpha/(\alpha+2)$.
Then, with overwhelming probability as $\ell \to \infty$, property \eqref{Itm:GeneralConePiece} of \Cref{Def:AuxiliarySmallCancel} holds.
\end{prop}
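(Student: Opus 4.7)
The plan is to observe that every lemma in Section~\ref{Sec:Pieces} concerning loose \emph{cone}-pieces (Lemmas~\ref{Lem:ReversingPiecesSmall}, \ref{Lem:PieceNonGeneric}, \ref{Lem:InfiniteWord}, \ref{Lem:SelfPieceBigOverlap}, and \Cref{Prop:FewConePieces}, \Cref{Lem:ConePieceDiameter}, \Cref{Lem:Systole}) carries over verbatim from the cubical setting to the general setting of \Cref{Conv:NiceUpsilon}, with $\widetilde X$ replaced by $\Upsilon$ and $X$ replaced by $G \backslash \Upsilon$. I would then apply the generalized version of \Cref{Prop:ConePieceSystole} with looseness constant $J = 2\delta$, and read off condition~\eqref{Itm:GeneralConePiece} of \Cref{Def:AuxiliarySmallCancel}.

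To justify that the Section~\ref{Sec:ConePiece} lemmas really do generalize, I would go through each of them and check that its proof uses only the following ingredients: (i) $\delta$--hyperbolicity of the ambient geodesic space, (ii) a proper cocompact $G$--action, (iii) the fact that every non-trivial element stabilizes a geodesic axis, and (iv) the growth estimates of \Cref{Thm:ConjugacyGrowth} together with \Cref{Rem:GroupoidGrowth}. All four ingredients are guaranteed by \Cref{Conv:NiceUpsilon}, which in particular stipulates that $\Upsilon$ is a cell complex so that \Cref{Rem:GroupoidGrowth} applies to counts of arbitrary vertices in metric balls. Crucially, the arguments of \Cref{Lem:ReversingPiecesSmall} (using thin quadrilaterals and fellow-travelling quasi-axes), of \Cref{Lem:PieceNonGeneric} (counting fundamental domains by choosing a start vertex close to the basepoint and bounding the remaining combinatorial data via $\delta$--thinness), and of \Cref{Lem:InfiniteWord} (the local-to-global principle for quasigeodesics) never invoke hyperplanes, cubes, or any cubical machinery beyond what \Cref{Def:LoosePieceOtherMetric} already supplies in its cube-free form.

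Once Lemmas~\ref{Lem:ReversingPiecesSmall}--\ref{Lem:SelfPieceBigOverlap} and hence \Cref{Prop:FewConePieces} are in place for $\Upsilon$, the two-step combinatorial argument proceeds unchanged. First, the generalized \Cref{Lem:ConePieceDiameter}, with $q = \alpha/(\alpha+2) \leq \tfrac{1}{6}$ (using $\alpha \leq \tfrac{2}{5}$, which is implied by $c < b\alpha/(\alpha+2) < b$ and the need to pick a valid $\alpha$; alternatively I restrict to $\alpha \leq \tfrac{2}{5}$ without loss since smaller $\alpha$ yields a stronger conclusion), gives that with overwhelming probability every $2\delta$--loose cone-piece among the $\gamma_i$ has diameter strictly less than $2q\ell = \tfrac{2\alpha}{\alpha+2}\ell$. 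Second, the generalized \Cref{Lem:Systole} shows that with overwhelming probability $\min_i |g_i|_{{}_\Upsilon} \geq (1-q)\ell = \tfrac{2}{\alpha+2}\ell$. Combining these yields the diameter bound $\alpha |g_i|_{{}_\Upsilon} = \alpha \stabletrans{g_i}_{{}_\Upsilon}$ for every $2\delta$--loose cone-piece in every $\gamma_i$, which is exactly condition~\eqref{Itm:GeneralConePiece}.

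The main obstacle I foresee is purely bookkeeping: verifying that no proof in Section~\ref{Sec:ConePiece} uses \Cref{Lem:LoosenUp} or \Cref{Lem:HullUniformThickness} as an internal step (rather than just as motivation). A quick scan confirms they do not---those two lemmas are needed only to translate between actual pieces of a cubical presentation and loose pieces, which is irrelevant here since \Cref{Def:LoosePieceOtherMetric} already provides the cube-free notion of loose piece. Thus the entire chain of arguments transplants cleanly, and the proposition follows.
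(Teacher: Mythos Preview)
Your proposal is correct and matches the paper's approach: the paper's proof is a sketch saying precisely that the chain of lemmas in \Cref{Sec:ConePiece} transplants to $\Upsilon$ because it relies only on $\delta$--hyperbolicity, the existence of geodesic axes, and the growth counts of \Cref{Thm:ConjugacyGrowth}, culminating in the analogue of \Cref{Prop:ConePieceSystole}.

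There is one subtlety the paper flags that you gloss over when you say ``verbatim.'' In the cubical setting of \Cref{Sec:Pieces}, a loose piece $s \subset \widetilde w$ is a combinatorial geodesic with endpoints at vertices, so when a fundamental domain is factored as $s \cdot y$ (as in \Cref{Eqn:WallPieceCyclicPerm} or \Cref{Eqn:InitialCountS}) the pieces $y$ can be counted directly via \Cref{Rem:GroupoidGrowth}. In $\Upsilon$, a $2\delta$--loose piece $s \subset \widetilde\gamma$ need not start or end at vertices of the cell complex, so one must perturb the factorization to vertex endpoints before counting; this introduces a bounded additive error that is absorbed into the multiplicative constants. You should acknowledge this adjustment rather than claim the arguments are literally identical. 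Your parenthetical about restricting to $\alpha \leq \tfrac{2}{5}$ is also a bit muddled (the hypothesis on $c$ depends on $\alpha$, so you cannot freely shrink $\alpha$), but this is harmless since the proposition is only ever applied with $\alpha < \tfrac{1}{20}$.
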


\begin{prop}\label{Prop:GeneralizedWallPiece}
Let $G$, $\Upsilon$, and $\{ H_i \}$ be as in \Cref{Def:AuxiliarySmallCancel}. Let $b$ be the growth exponent of $G$, and let $a$ be an upper bound on the growth exponents of the $H_j$. Note that all lengths and growths are measured with respect to the action on $\Upsilon$.

Consider conjugacy classes $[g_1], \ldots, [g_k]$ in $G$, chosen uniformly from all conjugacy classes of $\Upsilon$--length at most $\ell$. Suppose $k \leq e^{c \ell}$ for some $c < \alpha(b-a)$. Then, with overwhelming probability as $\ell \to \infty$, property \eqref{Itm:GeneralWallPiece}  of \Cref{Def:AuxiliarySmallCancel} holds.
\end{prop}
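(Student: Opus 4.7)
The plan is to mirror the proof of \Cref{Prop:WallPieceControl} almost verbatim, replacing the combinatorial path counts in the cube complex $\widetilde X$ by orbit counts in $\Upsilon$ coming from \Cref{Thm:ConjugacyGrowth} and \Cref{Rem:GroupoidGrowth}. The only structural change is that the ``hyperplane neighborhood'' is now a metric neighborhood $\neb_\kappa(gH_jp)$ of a quasiconvex orbit, whose growth is controlled by $a$ by hypothesis, rather than the carrier of a hyperplane in an ambient cubulation.

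First, for each integer $n \leq \ell$ I would introduce the set $\mathcal U(n,\alpha)$ of conjugacy classes $[g]$ of $\Upsilon$--length exactly $n$ whose chosen closed--geodesic $\gamma$ contains a $\kappa$--loose wall-piece $s$ with $|s| \geq \alpha n$. After cyclic reparametrization, $\gamma$ lifts to a concatenation $s \cdot y$ along $\widetilde \gamma$, where the endpoints of $s$ lie in $\neb_\kappa(gH_jp)$ for some $g \in G$ and some $j$. Using cocompactness of the $G$--action on $\Upsilon$, I may normalize so that the starting vertex of $s$ is within a uniformly bounded distance of the basepoint $p$; this leaves only finitely many choices for the orbit representative of the pair $(j, gH_jp)$.

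The other endpoint of $s$ lies in $\neb_\kappa(H_jp)$ at distance at most $|s|+O(1)$ from $p$. Since $a$ dominates every $\xi_{H_j}$, there are at most $B'e^{a|s|}$ candidate endpoints; and by $\delta$--hyperbolicity together with the proper cocompact action of $G$, each pair of endpoints determines $s$ up to a uniform multiplicative constant that can be absorbed into $B'$. Meanwhile \Cref{Thm:ConjugacyGrowth}.\eqref{Itm:TotalGrowth}, applied through \Cref{Rem:GroupoidGrowth}, bounds the number of closing paths $y$ of length $n-|s|$ by $B''e^{b(n-|s|)}$. Multiplying by the $n$ cyclic permutations of the relator, I obtain
\[
|\mathcal U(n,\alpha)| \ \leq\ B n\, e^{(a\alpha + b(1-\alpha))n},
\]
exactly as in the cubical argument.

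From this point the computation is identical to that of \Cref{Prop:WallPieceControl}: summing the resulting geometric series over $n \leq \ell$ and dividing by the lower bound $|\mathcal G(\ell)| \geq (A/\ell)e^{b\ell}$ from \Cref{Thm:ConjugacyGrowth}.\eqref{Itm:ConjugacyGrowth} gives $\Pi(\alpha,\ell) \leq C\ell^2 e^{(a-b)\alpha\ell}$ for a constant $C = C(\Upsilon, \{H_j\}, \kappa)$; a union bound over the $k \leq e^{c\ell}$ relators then produces a total failure probability bounded by $C\ell^2 e^{(c + (a-b)\alpha)\ell}$, which tends to $0$ because the hypothesis $c < \alpha(b-a)$ makes the exponent strictly negative. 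The main obstacle is the bookkeeping required to promote the abstract quasiconvex growth bound $\xi_{H_j} \leq a$ into an honest count of lifts of short segments whose endpoints lie near $H_j$--orbits in a general cell complex; once this is handled with the uniform Coornaert--Knieper estimates recalled in \Cref{sec:growth}, the rest of the argument is a direct transcription of the wall-piece proof in the cubical setting.
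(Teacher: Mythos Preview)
Your proposal is correct and follows essentially the same approach as the paper, which explicitly states that the proof of \Cref{Prop:GeneralizedWallPiece} is a near-verbatim transcription of the proof of \Cref{Prop:WallPieceControl}, with $\Upsilon$ in place of $\widetilde X$ and quasiconvex orbits $gH_jp$ in place of hyperplane carriers. The one subtlety the paper singles out, which you gloss over, is that in the cell complex $\Upsilon$ the endpoints of a loose piece $s \subset \widetilde\gamma$ need not land at vertices, so the factorization $\gamma = s \cdot y$ requires perturbing both pieces to begin and end at $0$--cells before invoking \Cref{Thm:ConjugacyGrowth} and \Cref{Rem:GroupoidGrowth}; this introduces a bounded additive error that is absorbed into the constants $B', B''$, exactly as your $O(1)$ term suggests but does not make explicit.
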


\begin{proof}[Sketch]
The proofs of 
Propositions~\ref{Prop:GeneralizedConePiece} and~\ref{Prop:GeneralizedWallPiece} 
are nearly identical to those of Propositions~\ref{Prop:ConePieceSystole} and~\ref{Prop:WallPieceControl}, respectively. 
There are two differences in the argument.
The primary difference is that $\Upsilon$ is substituted for $\widetilde{X}$, and hyperplane stabilizers are replaced by general quasiconvex subgroups $H_i$. The requirement that every infinite-order element of $\pi_1 X$ stabilizes a geodesic axis (compare \Cref{Conv:NiceX}), which was heavily used in the proofs of Propositions~\ref{Prop:ConePieceSystole} and~\ref{Prop:WallPieceControl}, is mirrored in our setting by the same requirement in $\Upsilon$.
Note that in both Propositions~\ref{Prop:WallPieceControl} and~\ref{Prop:ConePieceSystole}, we obtain genericity statements via the counts of \Cref{Thm:ConjugacyGrowth}, which apply perfectly well to the $G$--action on $\Upsilon$.

The second difference is more subtle. In the arguments of \Cref{Sec:Pieces}, a loose piece $s \subset \widetilde w$ is always a combinatorial geodesic segment whose endpoints are at vertices of $\widetilde X$. Thus, when we factor a fundamental domain of $w$ as $s \cdot y$, the parts $s$ and $y$ are both combinatorial geodesics, and the number of possibilities for $y$ can be estimated via \Cref{Thm:ConjugacyGrowth} and \Cref{Rem:GroupoidGrowth}. 
Meanwhile, in $\Upsilon$,
the set of endpoints of loose pieces $s \subset \widetilde \gamma$ might be locally infinite. To enable counting arguments, we make the following adjustment: when we factor a fundamental domain for $\gamma$ as $s \cdot y$, we perturb both $s$ and $y$ so that they begin and end at points in the $G$--orbit of the basepoint $\upsilon$. This adjustment perturbs lengths by a bounded additive error, which becomes absorbed into the multiplicative constants of calculations such as 
\eqref{Eqn:WallPieceCyclicPerm} and \eqref{Eqn:InitialCountS}. Thus perturbing $\widetilde \gamma$ to pass through the orbit $G\upsilon$ does not affect the probabilistic conclusions.
\end{proof}

Combining Propositions~\ref{Prop:GeneralizedConePiece} and~\ref{Prop:GeneralizedWallPiece} gives:

\begin{cor}\label{Cor:SmallCancelUpsilon}
Let $G$, $\Upsilon$, and $\{ H_i \}$  be as in \Cref{Def:AuxiliarySmallCancel}. Let $b$ be the growth exponent of $G$ with respect to $\Upsilon$, and let $a$ be an upper bound on the growth exponents of the $H_i$.
As above, choose a basepoint $\upsilon \in \Upsilon$, and a representative closed--geodesic in $G \backslash \Upsilon$ for every conjugacy class in $G$.

Let $k \leq e^{c \ell}$, where $c < \min \{  \alpha(b-a),  b\alpha/(\alpha+2)   \}$, and
consider conjugacy classes $[g_1], \ldots, [g_k]$ in $G$, chosen uniformly from all conjugacy classes of $\Upsilon$--length at most $\ell$. Then, with overwhelming probability as $\ell \to \infty$, the presentation  $\langle G : H_1, \ldots, H_m \mid g_1, \ldots, g_k \rangle$ is $C'(\alpha)$ with respect to $(\Upsilon, \upsilon, \gamma_1, \ldots, \gamma_k)$.
\end{cor}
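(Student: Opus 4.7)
The plan is to derive the corollary by direct combination of Propositions~\ref{Prop:GeneralizedConePiece} and~\ref{Prop:GeneralizedWallPiece}, observing that conditions \eqref{Itm:GeneralConePiece} and \eqref{Itm:GeneralWallPiece} of \Cref{Def:AuxiliarySmallCancel} are precisely what those two propositions produce. Under the hypothesis
\[
c \: < \: \min \left\{ \alpha(b-a), \: \frac{b\alpha}{\alpha+2} \right\},
\]
we have in particular $c < b\alpha/(\alpha+2)$, so \Cref{Prop:GeneralizedConePiece} applies and yields that the probability $P_1(\ell)$ that condition \eqref{Itm:GeneralConePiece} fails (i.e.\ some pair $\gamma_i,\gamma_j$ has a $2\delta$--loose cone-piece of diameter at least $\alpha \stabletrans{g_i}_{{}_\Upsilon}$) tends to $0$ as $\ell \to \infty$. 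Simultaneously, $c < \alpha(b-a)$, so \Cref{Prop:GeneralizedWallPiece} applies and yields that the probability $P_2(\ell)$ that condition \eqref{Itm:GeneralWallPiece} fails (i.e.\ some $\gamma_i$ admits a $\kappa$--loose wall-piece of diameter at least $\alpha \stabletrans{g_i}_{{}_\Upsilon}$) also tends to $0$.

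Next, I would assemble the conclusion via a union bound. The event that the presentation $\langle G : H_1, \ldots, H_m \mid g_1, \ldots, g_k\rangle$ fails to be $C'(\alpha)$ with respect to $(\Upsilon, p, \gamma_1, \ldots, \gamma_k)$ is contained in the union of the two bad events, so its probability is at most $P_1(\ell) + P_2(\ell) \to 0$. Equivalently, the good event (both \eqref{Itm:GeneralConePiece} and \eqref{Itm:GeneralWallPiece} hold) occurs with overwhelming probability as $\ell \to \infty$. This gives exactly the statement of the corollary.

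The main subtlety to check, rather than any real obstacle, is that both propositions are being applied to the \emph{same} random sample $[g_1], \ldots, [g_k]$ drawn uniformly from conjugacy classes of $\Upsilon$--length at most $\ell$; no independence between the two failure events is needed, since the union bound only requires each individual probability to vanish. One should also observe that the constants used to define the presentation in \Cref{Def:AuxiliarySmallCancel} (the constant $2\delta$ for cone-pieces and the constant $\kappa$ built from the quasiconvexity constants of the $H_j$ and $\delta$) are exactly of the form accommodated by the generic looseness parameter $J$ appearing in Propositions~\ref{Prop:GeneralizedConePiece} and~\ref{Prop:GeneralizedWallPiece}; thus the probabilistic statements supply exactly the needed diameter bounds with the right looseness. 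Once these bookkeeping points are verified, the corollary follows immediately.
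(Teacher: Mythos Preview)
Your proposal is correct and matches the paper's approach exactly: the paper simply states that the corollary follows by combining Propositions~\ref{Prop:GeneralizedConePiece} and~\ref{Prop:GeneralizedWallPiece}, and your union-bound argument is precisely how that combination is carried out. Your remark about the looseness parameter~$J$ is a slight overelaboration, since those propositions are already stated to yield conditions~\eqref{Itm:GeneralConePiece} and~\eqref{Itm:GeneralWallPiece} of \Cref{Def:AuxiliarySmallCancel} directly, but this does no harm.
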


\subsection{Translation back to cube complexes}

In \Cref{Sec:Pieces}, we used \Cref{Lem:LoosenUp} to show that every piece in a cubical presentation is associated with a corresponding $J$--loose piece, in the sense of \Cref{Def:LoosePiece}. The following statement is an analogue of \Cref{Lem:LoosenUp} that allows us to compare pieces in a cubical presentation $\langle X  \mid Y_1, \ldots, Y_k \rangle$ to loose pieces in a $G$--action on $\Upsilon$.

\begin{prop}\label{Prop:QITranslation}
Let $G = \pi_1 X$, where $X$ is a compact non-positively curved cube complex whose immersed hyperplanes are essential, and let $H_1, \ldots, H_m$ be their fundamental groups.
Suppose that $G$ acts properly and cocompactly on a $\delta$--hyperbolic geodesic metric space $\Upsilon$ that admits a $G$--equivariant $\lambda$--quasiisometry from $\widetilde{X}$.

Let $[g_1], \ldots, [g_k]$ be conjugacy classes in $G$. For each $i$, let $\widetilde{Y_i} = \hull(\widetilde w_i)$, 
where $\widetilde w_i$ is an axis for $g_i$ in $\widetilde{X}$. Finally, let $Y_i  = \langle g_i \rangle \backslash \widetilde{Y_i}$.

Suppose that, for some $\overline{\lambda} > \lambda$, the presentation $\langle G : H_1, \ldots, H_m \mid g_1, \ldots, g_k \rangle$ is $C'(\alpha/ \overline{\lambda})$ with respect to $(\Upsilon, p, \gamma_1, \ldots, \gamma_k)$.
Then there exists a constant $M = M(\alpha, \delta, \kappa, \lambda, \overline{\lambda},  X)$ such that whenever $\stabletrans{g_i}_{{}_\Upsilon} \geq M$ for all $i$, the following holds.
The cubical presentation $\langle X  \mid Y_1, \ldots, Y_k \rangle$ is $C'( \alpha)$. Moreover, every hyperplane  $U \subset Y_i$ has a carrier $N(U)$ of diameter strictly less than $\alpha \systole{Y_i}$.
\end{prop}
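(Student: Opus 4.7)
The plan is to push every cubical piece in the candidate presentation $\langle X \mid Y_1,\ldots,Y_k\rangle$ through the $G$--equivariant $\lambda$--quasiisometry $f\colon\widetilde X\to\Upsilon$ and apply the $C'(\alpha/\overline{\lambda})$ hypothesis. Write the quasiisometry constants as $\lambda_1,\lambda_2,\epsilon$ with $\lambda_1\lambda_2=\lambda$. The basic length identities are $\systole{Y_i}=|g_i|_{\widetilde X}=\stabletrans{g_i}_{\widetilde X}$ (since $g_i$ stabilizes a combinatorial axis in $\widetilde X$ by \Cref{Conv:NiceX}) and $\stabletrans{g_i}_{\Upsilon}\leq\lambda_2\,\systole{Y_i}$. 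By Morse stability of quasigeodesics in the $\delta$--hyperbolic space $\Upsilon$, the image $f(\widetilde w_i)$ lies within a uniform Hausdorff distance $D_0=D_0(\lambda,\epsilon,\delta)$ of the $\Upsilon$--axis $\widetilde\gamma_i$ of $g_i$; combined with \Cref{Lem:HullUniformThickness}, the image $f(\widetilde Y_i)$ lies in a uniform neighborhood of $\widetilde\gamma_i$ of some radius $D_1$.

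For cone-pieces, suppose points $p,q\in\widetilde Y_i\cap h\widetilde Y_j$ satisfy $d_{\widetilde X}(p,q)\geq d$. By \Cref{Lem:LoosenUp} there is a $J$--loose cone-piece in $\widetilde w_i$ of diameter at least $d$ (with $J=3K$). Applying $f$ and then closest-point projection onto $\widetilde\gamma_i$, we obtain a segment $[x,y]\subset\widetilde\gamma_i$ of length at least $d/\lambda_1-C_0$ whose endpoints lie in $\neb_{2D_1}(h\widetilde\gamma_j)$. A standard thin-quadrilateral argument in $\Upsilon$ (applied to the quadrilateral with corners $x,y$ and their closest-point projections to $h\widetilde\gamma_j$) shows that every point of $[x,y]$ sufficiently deep inside, namely more than $4D_1+C$ from each of $x,y$, in fact lies in $\neb_{2\delta}(h\widetilde\gamma_j)$; so the $2\delta$--loose cone-piece between $\widetilde\gamma_i$ and $h\widetilde\gamma_j$ has diameter at least $d/\lambda_1-C_1$ for some $C_1=C_1(\lambda,\epsilon,\delta,K)$. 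The $C'(\alpha/\overline{\lambda})$ hypothesis then gives
\[
\frac{d}{\lambda_1}-C_1 \ <\ \frac{\alpha}{\overline{\lambda}}\,\stabletrans{g_i}_{\Upsilon}\ \leq\ \frac{\alpha\lambda_2}{\overline{\lambda}}\,\systole{Y_i},
\]
which rearranges to $d<(\alpha\lambda/\overline{\lambda})\,\systole{Y_i}+C_2$. Since $\overline{\lambda}>\lambda$, the coefficient $\alpha\lambda/\overline{\lambda}$ is strictly less than $\alpha$, and $d<\alpha\systole{Y_i}$ whenever $\systole{Y_i}>C_2\overline{\lambda}/\bigl(\alpha(\overline{\lambda}-\lambda)\bigr)$; this threshold is met by taking $M$ large, since $\systole{Y_i}\geq\stabletrans{g_i}_{\Upsilon}/\lambda_2\geq M/\lambda_2$.

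For wall-pieces and for the carriers of hyperplanes of $Y_i$, the argument is parallel. By \Cref{Lem:LoosenUp}.\eqref{Itm:HyperplaneInY}, every diameter $\geq d$ intersection $\widetilde Y_i\cap N(\widetilde U)$ produces a $J$--loose wall-piece in $\widetilde w_i$ of diameter $\geq d$. The stabilizer of $\widetilde U$ equals a conjugate $hH_jh^{-1}$, and since this stabilizer acts cocompactly on $N(\widetilde U)$, the carrier is at bounded Hausdorff distance from any of its $hH_jh^{-1}$--orbits; pushing forward by the $G$--equivariant $f$ and using the quasiconvexity constant $\kappa_j$ of $H_jp$ in $\Upsilon$, we conclude that $f(N(\widetilde U))$ lies within bounded Hausdorff distance of $hH_jp$. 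The same projection-plus-thin-quadrilateral argument then produces a $\kappa$--loose wall-piece in $\widetilde\gamma_i$ of diameter at least $d/\lambda_1-C_3$, and the hypothesis yields $d<\alpha\systole{Y_i}$ once $\stabletrans{g_i}_{\Upsilon}\geq M$. The main technical obstacle is bookkeeping the accumulated additive errors — Morse-stability fellow-traveling constants, the Hausdorff distance between cubical carriers and $\Upsilon$--orbits, the passage from $D$--loose to $2\delta$-- or $\kappa$--loose pieces via thin-quadrilateral arguments, and the additive constant $\epsilon$ of the quasiisometry — and showing they are all absorbed by choosing $M$ large enough that the strict multiplicative margin $\alpha(1-\lambda/\overline{\lambda})>0$ coming from $\overline{\lambda}>\lambda$ dominates.
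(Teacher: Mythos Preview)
Your proposal is correct and follows essentially the same approach as the paper: push the cubical piece through the equivariant quasiisometry $f$, use Morse stability to land near the $\Upsilon$--axes $\widetilde\gamma_i$, apply a thin-quadrilateral argument to convert a large-radius fellow-traveling segment into a genuine $2\delta$-- or $\kappa$--loose piece, invoke the $C'(\alpha/\overline\lambda)$ hypothesis, and finally use $\stabletrans{g_i}_\Upsilon\le\lambda_2\systole{Y_i}$ together with the strict margin $\lambda/\overline\lambda<1$ to absorb all additive errors into a threshold $M$. The only cosmetic difference is that you route through \Cref{Lem:LoosenUp} to first pass to $J$--loose pieces in $\widetilde w_i\subset\widetilde X$ before applying $f$, whereas the paper works directly from points of $\widetilde Y_i\cap\widetilde Y_j$ via \Cref{Lem:HullUniformThickness}; the effect on the constants is identical.
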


\begin{proof}
We begin by recalling the above definition of a $\lambda$--quasiisometry. By \Cref{Eqn:Lambda12}, there is a ($G$--equivariant) function $f \from \widetilde X \to \Upsilon$, along with positive constants $\lambda_1, \lambda_2, \epsilon$ such that $\lambda_1 \lambda_2 = \lambda$ and
\begin{equation}\label{Eqn:LambdaRestate}
\frac{1}{\lambda_1} \dist_{\widetilde X}(x,y) - \epsilon \: \leq \: \dist_{\Upsilon} (f(x), f(y)) \: \leq \: \lambda_2 \dist_{\widetilde X}(x,y) + \epsilon.
\end{equation}
We can now relate the systole of $Y_i$ in $X$ to the (stable) translation length of $g_i$ in $\Upsilon$.
By \Cref{Def:Piece} and \Cref{Def:TransLength}, we have 
\[
\systole{Y_i} =  |g_i|_{{}_X} \geq \stabletrans{g_i}_{{}_X} = \lim_{n \to \infty} \frac{\dist_{\widetilde X}(q, g_i^n q)}{n},
\]
for an arbitrary point $q \in \widetilde X$. Letting $q' = f(q) \in \Upsilon$ yields
\begin{equation}\label{Eqn:SystoleComparison}
\lambda_2 \systole{Y_i} \geq \lambda_2 \! \lim_{n \to \infty} \frac{\dist_{\widetilde X}(q, g_i^n q)}{n} 
=  \lim_{n \to \infty} \frac{\lambda_2 \dist_{\widetilde X}(q, g_i^n q) + \epsilon}{n} \
\geq  \lim_{n \to \infty} \frac{\dist_\Upsilon(q', g_i^n q')}{n}    = \stabletrans{g_i}_{{}_\Upsilon} .
\end{equation}

There is a constant $\eta = \eta(\delta, \lambda_1, \lambda_2, \epsilon)$ with the following properties. First, for every convex set $S \subset \widetilde X$, the image  $f(S) \subset \Upsilon$ is $\eta$--quasiconvex. Second, for every bi-infinite geodesic $\widetilde w \to \widetilde X$, the quasigeodesic image $f(\widetilde w)$ is an $\eta$--fellow-traveler with every geodesic that has the same endpoints in $\bdy \Upsilon$.

\smallskip

Suppose that there is a diameter $d$ cone-piece in $X^*$ between $Y_i$ and $Y_j$, where $d$ is very large (the precise criterion will be described below). Following \Cref{Def:Piece}, this piece is a component of  $\widetilde Y_i \cap \widetilde Y_j$ for appropriate preimages $\widetilde Y_i$ and $\widetilde Y_j$. Let $x,y \in \widetilde Y_i \cap \widetilde Y_j$ be points such that $\dist_{\widetilde X}(x,y) = d$. By \Cref{Lem:HullUniformThickness}, there are points $x_i, y_i \in  \widetilde w_i$ and $x_j, y_j \in \widetilde w_j$ that are $K$--close to $x$ and $y$, respectively. 

The following construction in $\Upsilon$ is illustrated in  \Cref{Fig:InfinitelyManyPrimes}.
Let $\widetilde \gamma_i, \widetilde \gamma_j$ be the representative axes for $g_i, g_j$, respectively, in $\Upsilon$.
Let $\widetilde \varpi_i = f(\widetilde w_i)$ be the image of $\widetilde w_i$ in $\Upsilon$, and observe that $\varpi_i$ lies in the $\eta$--neighborhood of the axis $\widetilde \gamma_i$. Similarly, let $\widetilde \varpi_j =  f(\widetilde w_j)$, and observe that $\widetilde \varpi_j$ lies in the $\eta$--neighborhood of the axis $\widetilde \gamma_j$.
Label points $x_i' =f(x_i), y_i' = f(y_i) \in \widetilde \varpi_i$ and $x_j' = f(x_j), y_j' = f(y_j) \in \widetilde \varpi_j$.
Finally, let $x_i'' , y_i'' \in \widetilde \gamma_i$ be points within $\eta$ of $x_i', y_i' $, and similarly define $x_j'' , y_j'' \in \widetilde \gamma_j$.

\begin{figure}
\begin{overpic}[width=4in]{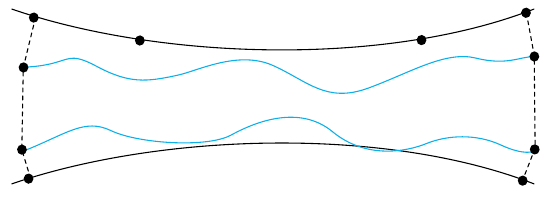}
\put(-4,4.5){$\leq \eta$}
\put(-19,14.5){$\leq \lambda_2 \cdot 2K + \epsilon$}
\put(-2,27){$\leq \eta$}
\put(97,4.5){$\leq \eta$}
\put(98,16){$\leq \lambda_2 \cdot 2K + \epsilon$}
\put(97,29){$\leq \eta$}
\put(40,21){$\widetilde \varpi_i = f(\widetilde w_i)$}
\put(40,13){$\widetilde \varpi_j = f(\widetilde w_j)$}
\put(48,6){$\widetilde \gamma_j$}
\put(48,28){$\widetilde \gamma_i$}
\put(6,9.5){$x_j'$}
\put(6,21){$x_i'$}
\put(92,10){$y_j'$}
\put(92,22){$y_i'$}
\put(5,-1){$x_j''$}
\put(90,-1){$y_j''$}
\put(7.5,35){$x_i''$}
\put(91,35){$y_i''$}
\put(25,31){$x_i'''$}
\put(73,31){$y_i'''$}
\end{overpic}
\caption{The figure lives in $\Upsilon$. The blue quasi-geodesics $\widetilde \varpi_i$ and $\widetilde \varpi_j$  are  the images of axes in $\widetilde X$.  The black geodesics are axes $\widetilde \gamma_i$ and $\widetilde \gamma_j$, respectively. Since $ \widetilde \varpi_i$ must $\eta$--fellow-travel with $ \gamma_i$, there is a point $x_i''$ that is $\eta$--close to $x_i'$, and similarly for the others. The $2\delta$--loose cone-piece between $\widetilde \gamma_i$ and $\widetilde \gamma_j$ will contain the segment $[x_i''', y_i''']$.}
\label{Fig:InfinitelyManyPrimes}
\end{figure}

Observe that the second inequality in \Cref{Eqn:LambdaRestate} gives
\[
\dist_\Upsilon(x_i'', x_j'') \leq \dist_\Upsilon(x_i', x_j') + 2 \eta \leq \lambda_2 \dist_{\widetilde X}(x_i, x_j) + \epsilon + 2 \eta \leq \lambda_2 \cdot 2K + \epsilon + 2 \eta.
\]
and similarly $\dist_\Upsilon(y_i'', y_j'') \leq \lambda_2 \cdot 2K + \epsilon + 2 \eta$. At the same time, the first inequality in \Cref{Eqn:LambdaRestate} gives
\begin{equation}\label{Eqn:PieceDistortionUpsilon}
\dist_\Upsilon(x_i'', y_i'') \geq \dist_\Upsilon(x_i', y_i') - 2 \eta \geq \frac{1}{\lambda_1} \dist_{\widetilde X}(x_i, y_i) - \epsilon - 2 \eta \geq \frac{d - 2K}{\lambda_1} - \epsilon - 2 \eta.
\end{equation}
Thus, when $d$ is sufficiently large, we have $\dist_\Upsilon(x_i'', y_i'') > 2 \lambda_2 K + \epsilon + 2 \eta + 2\delta$.

Next, observe that the geodesic quadrilateral with vertices $x_j'', x_i'', y_i'', y_j''$ is $2\delta$--thin. Thus every point of  $[x_i'', y_i''] \subset \widetilde \gamma_i$ must be $2\delta$--close to some other side of the quadrilateral. Let $x_i''' \in [x_i'', y_i'']$ be the point such that $\dist_\Upsilon(x_i'', x_i''') = 2 \lambda_2 K + \epsilon + 2 \eta + 2\delta$. Since we have taken $d$ to be large, this point $x_i'''$ exists and is far from $y_i''$.
Then the side that is $2\delta$--close to $x_i'''$ must be on $\widetilde \gamma_j$. 
The same conclusion holds for the point $y_i''' \in [x_i'', y_i'']$  such that $\dist_\Upsilon(y_i'', y_i''') =  2 \lambda_2  K + \epsilon + 2 \eta + 2\delta$. In particular, the entire segment $[x_i''',y_i''']$ lies in  $\neb_{2\delta}(\widetilde \gamma_j)$. Hence there is a $2\delta$--loose cone-piece in $\Upsilon$, of diameter at least
\[
\dist_\Upsilon(x_i''', y_i''') = \dist_\Upsilon(x_i'', y_i'') - 2(2 \lambda_2  K + \epsilon + 2 \eta + 2\delta).
\]

Since $\langle G : H_1, \ldots, H_m \mid g_1, \ldots, g_k \rangle$ is $C'(\alpha / \overline{\lambda})$ with respect to $(\Upsilon, p, \gamma_1, \ldots, \gamma_k)$, we get
\begin{align*}
\frac{\alpha} { \: \overline{\lambda} \:} \stabletrans{g_i}_{{}_\Upsilon}
& > \dist_\Upsilon(x_i''', y_i''') \\
& = \dist_\Upsilon (x_i'', y_i'') - 2(2 \lambda_2  K + \epsilon + 2 \eta + 2\delta)  \\
& \geq  \dist_\Upsilon(x_i', y_i') -  2 \eta - 2(2 \lambda_2  K + \epsilon + 2 \eta + 2\delta) \\
& =  \dist_{\Upsilon}(x_i', y_i') - (4 \lambda_2  K + 6 \eta  + 4 \delta + 2 \epsilon ) \\
&    \geq \frac{1}{\lambda_1}   \dist_{\widetilde X}(x_i, y_i) - \epsilon - (4 \lambda_2  K + 6 \eta  + 4 \delta + 2 \epsilon )     \\
& \geq \frac{1}{\lambda_1} (\dist_{\widetilde X}(x,y) - 2K)   - (4 \lambda_2  K + 6 \eta  + 4 \delta + 3 \epsilon )  \\
& = \frac{1}{\lambda_1} (d - 2K)   - (4 \lambda_2  K + 6 \eta  + 4 \delta + 3 \epsilon ).
\end{align*}

Combining the last computation with \Cref{Eqn:SystoleComparison}, we obtain
\[
  \frac{ \lambda_1 \lambda_2}{  \overline \lambda } \, \alpha   \systole{Y_i} \geq
  \frac{ \lambda_1}{\:  \overline \lambda \:} \, \alpha  \stabletrans{g_i}_{{}_\Upsilon}
> d \: - 2K   - \lambda_1 (4 \lambda_2  K + 6 \eta  + 4 \delta + 3 \epsilon ).
\]
Since $\lambda_1 \lambda_2 = \lambda < \overline{\lambda} $,  there is a constant $M_1$ such that when $\stabletrans{g_i}_{{}_\Upsilon} \geq M_1$, we have
\[
 \alpha \systole{Y_i} >   \frac{ \lambda_1 \lambda_2}{\:  \overline \lambda \:} \, \alpha   \systole{Y_i}  +  2K   + \lambda_1 (4 \lambda_2  K + 6 \eta  + 4 \delta + 3 \epsilon )
> d.
\]
In other words, the diameter of every cone-piece in $Y_i$ is less than $\alpha  \systole{Y_i}$.
This establishes the cone-piece portion of the claim that $\langle X \mid Y_1, \ldots, Y_k \rangle$ is $C'(\alpha )$.

\medskip

For the wall-piece portion of the desired conclusion, suppose that $P = \widetilde Y_i \cap N(\widetilde U_j)$, where $\widetilde U_j$ is a hyperplane of $\widetilde X$, and suppose that $\diameter(P) = d$ is very large. 
(Note that $P$ may or may not be disjoint from $\widetilde Y_i$, hence might not be a wall-piece, according to \Cref{Def:Piece}. Compare \Cref{Rem:MoreGeneralHyperplane}.)
 Let $x,y \in  P$ be points realizing the diameter. By \Cref{Lem:HullUniformThickness}, there are points $x_i, y_i \in  \widetilde w_i$
  that are $K$--close to $x$ and $y$, respectively.
  Similarly, there are points $x_j, y_j \in \widetilde U_j$
  that are $1$--close to $x$ and $y$, respectively.

Let $\widetilde \varpi_i = f(\widetilde w_i)$ be the image of $\widetilde w_i$ in $\Upsilon$, and observe that $\widetilde \varpi_i$ lies in the $\eta$--neighborhood of the axis $\widetilde \gamma_i$.
Let $x_i'=f(x_i), y_i'=f(y_i)$ be points in $\widetilde \varpi_i$, and let $x_i'' , y_i'' \in \widetilde \gamma_i$ be points within $\eta$ of $x_i', y_i' $.

Let $\Theta_j \subset \Upsilon$ be the image of $\widetilde U_j$ under the quasi-isometry, and let $x_j'=f(x_j), y_j'=f(y_j) \in  \Theta_j$ be the images of $x_j, y_j$. Recall that $\upsilon \in \Upsilon$ is the pre-chosen basepoint, and that each orbit $H_j \upsilon$ is $\kappa_j$--quasiconvex. 
Note that $\Theta_j  \in \neb_\psi(g H_j \upsilon)$ for some $g \in G$, where $\psi$ depends on $X, \lambda_1, \lambda_2, \epsilon, \kappa_j$.
Let $x_j'' , y_j'' \in g H_j \upsilon$ be points that are $\psi$--close to $x_j', y_j'$, respectively.

First, observe that
\begin{align*}
\dist_\Upsilon(x_i'', x_j'')
& \leq \dist_\Upsilon(x_i'', x_i') +  \dist_\Upsilon(x_i', x_j') + \dist_\Upsilon(x_j', x_j'') \\
& \leq \eta + (\lambda_2 \dist_{\widetilde X}(x_i, x_j) + \epsilon) + \psi \\
& \leq \eta + (\lambda_2 (K + 1) + \epsilon)  + \psi.
\end{align*}
The same estimate holds  for $\dist_\Upsilon(y_i'', y_j'')$.

The geodesic quadrilateral with vertices $x_j'', x_i'', y_i'', y_j''$ is $2\delta$--thin. Thus every point of  $[x_i'', y_i''] \subset \widetilde \gamma_i$ must be $2\delta$--close to some other side of the quadrilateral. Let $x_i''' \in [x_i'', y_i'']$ be the point such that $\dist_\Upsilon(x_i'', x_i''') =  \eta + \lambda_2 (K + 1) + \epsilon  + \psi + 2\delta$. (As with cone pieces, such a point $x_i'''$ exists whenever $d$ is sufficiently large.)
Then the side that is $2\delta$--close to $x_i'''$ must be the opposite side, namely $[x_j'', y_j'']$. The same conclusion holds for the point $y_i''' \in [x_i'', y_i'']$ such that $\dist_\Upsilon(y_i'', y_i''') =   \eta + \lambda_2 (K + 1) + \epsilon  + \psi + 2\delta$. Thus
\[
x_i''',y_i''' \:  \in \: \widetilde \gamma_i \cap \neb_{2\delta}([x_j'', y_j''])
\: \subset \: \widetilde \gamma_i \cap \neb_{\kappa}(g H_j \upsilon).
\]
The last containment uses the property $\kappa \geq 2\delta + \kappa_j$, where $g H_j \upsilon$ is $\kappa_j$--quasiconvex.

Since $\langle G : H_1, \ldots, H_m \mid g_1, \ldots, g_k \rangle$ is $C' ( \alpha / \overline{\lambda} )$ with respect to $(\Upsilon, p, \gamma_1, \ldots, \gamma_k)$, we get
\begin{align*}
\frac{\alpha}{\: \overline{\lambda} \:} \stabletrans{g_i}_{{}_\Upsilon}
& > \dist_\Upsilon(x_i''', y_i''') \\
& = \dist_\Upsilon (x_i'', y_i'') - 2( \eta + \lambda_2 (K + 1) + \epsilon  + \psi + 2\delta)  \\
& \geq  \dist_\Upsilon(x_i', y_i')  - 2 ( \eta + \lambda_2 (K + 1) + \epsilon  + \psi  + 2\delta)  -  2 \eta \\
& =  \dist_\Upsilon(x_i', y_i') -  2 ( 2\eta + \lambda_2 (K + 1) + \epsilon  + \psi  + \delta )  \\
&    \geq \frac{1}{\lambda_1}   \dist_{\widetilde X}(x_i, y_i)  - \epsilon - 2 ( 2\eta + \lambda_2 (K + 1) + \epsilon  + \psi  + \delta )    \\
& \geq \frac{1}{\lambda_1} (\dist_{\widetilde X}(x,y) - 2K)   - 2 ( 2\eta + \lambda_2 (K + 1) + 2\epsilon  + \psi  + \delta ) \\
& = \frac{1}{\lambda_1} (d - 2K)   - 2 (2 \eta + \lambda_2 (K + 1) + 2\epsilon  + \psi  + \delta ) .
\end{align*}

Combining the last computation with \Cref{Eqn:SystoleComparison}, we obtain
\[
  \frac{ \lambda_1 \lambda_2}{  \overline \lambda } \, \alpha   \systole{Y_i} \geq
  \frac{ \lambda_1 }{ \: \overline \lambda \: } \, \alpha  \stabletrans{g_i}_{{}_\Upsilon}
> d \: - 2K   - 2 \lambda_1 (2 \eta + \lambda_2 (K + 1) + 2\epsilon  + \psi  + \delta ).
\]

Since $\lambda_1\lambda_2 = \lambda < \overline{\lambda} $,  there is a constant $M_2$ such that when $\stabletrans{g_i}_{{}_\Upsilon} \geq M_2$, we have
\[
 \alpha \systole{Y_i} >   \frac{ \lambda_1 \lambda_2}{  \overline \lambda } \, \alpha   \systole{Y_i}  +  2K   + 2 \lambda_1 ( 2\eta + \lambda_2 (K + 1) + 2\epsilon  + \psi  + \delta )
> d. 
\]
Thus $d = \diameter(P) <  \alpha  \systole{Y_i}$. This bounds the size of hyperplane carriers in $Y_i$, as well as wall-pieces involving $Y_i$. We conclude that when $\stabletrans{g_i}_{{}_\Upsilon} \geq M = \max(M_1, M_2)$
 the cubical presentation is $C'(\alpha)$.
\end{proof}

\subsection{Main result}

We can now restate and prove \Cref{Thm:OtherSample}. After the proof, we discuss a potential strengthening.

\begin{named}{Theorem~\ref{Thm:OtherSample}}
Let $G=\pi_1X$, where $X$ is a compact non-positively curved cube complex, and suppose that $G$ is hyperbolic. Suppose that $G$ also acts properly and cocompactly on a geodesic metric space $\Upsilon$, where every non-trivial element of $G$ stabilizes a geodesic axis. Suppose that there is a $G$--equivariant $\lambda$--quasiisometry   $\widetilde X \to \Upsilon$.

Let $b$ be the growth exponent of $G$ with respect to $\Upsilon$,  
and let $a$ be the maximal growth exponent in $\Upsilon$ of a stabilizer of an essential hyperplane of $\widetilde X$.
Let $k\leq  e^{c \ell}$, where
\[ c<\min \left\{ \frac{(b-a)}{20 \lambda}, \,  \frac{b}{40 \lambda + 1} \right\} .
\]
Then with overwhelming probability as $\ell \to \infty$, for any set of conjugacy classes
$[g_1], \ldots, [g_k]$ with each $|g_i|_{{}_\Upsilon} \leq \ell$,
the group $\overline{G} = G/ \nclose{g_1, \ldots, g_k}$ is hyperbolic and is the fundamental group of a compact, non-positively curved cube complex.
\end{named}

\begin{proof}
Observe that our hypotheses on $c$ can be restated as 
 \[
 c < (b-a) \frac{1/20}{ \lambda} 
 \quad \text{and} \quad
 c < \frac{b  (\frac{1}{ 20}) / \lambda }{ (\frac{1}{ 20}) / \lambda + 2}  .
 \]
By continuity, we may choose constants $\alpha < \frac{1}{20}$ and $\overline{\lambda} > \lambda$ such that
\[
c < (b-a) \frac{\alpha }{\:  \overline{\lambda} \: } 
 \quad \text{and} \quad
c < \frac{b \alpha/ \overline{\lambda}} { \alpha/ \overline{\lambda} + 2}.
\]

As in the proof of \Cref{Thm:main}, we replace $X$ by its essential core, so that all hyperplanes are essential. This does not affect the multiplicative constant $\lambda$ in the quasi-isometry to $\Upsilon$. Let $H_1, \ldots, H_m$ be the hyperplane stabilizers in $G = \pi_1 X$. We also subdivide $X$ while retaining the original metric, so that every non-trivial conjugacy class is represented by a closed--geodesic. Both operations preserve the growth exponents $a$ and $b$. 
\Cref{Cor:SmallCancelUpsilon} implies that with overwhelming probability as $\ell \to \infty$, the presentation  $\langle G : H_1, \ldots, H_m \mid g_1, \ldots, g_k \rangle$ is $C'(\alpha/ \overline{\lambda})$ with respect to $\Upsilon$ and any choice of basepoint and axes. 

For each $i$, let  $\widetilde w_i \subset \widetilde X$ be a geodesic axis stabilized by $g_i \in [g_i]$. For each $\widetilde w_i$, let $\widetilde Y_i = \hull (  \widetilde w_i ) \subset \widetilde{X}$,
whose quotient $Y_i =  \langle g_i \rangle \backslash \widetilde Y_i$ admits a local isometry into $X$.
By \Cref{Lem:HullUniformThickness},  $\widetilde Y_i$ lies in a uniform neighborhood of $\widetilde w_i$, hence its quotient $Y_i$ is compact and a quasi-circle. By construction, every hyperplane of $\widetilde Y_i$ cuts $\widetilde w_i$. This gives us a cubical presentation  $X^* = \langle X \mid Y_1,\ldots, Y_k\rangle$ such that $\pi_1 X^* = \overline{G}$.

Now, \Cref{Prop:QITranslation} implies that  with overwhelming probability as $\ell \to \infty$, the cubical presentation $X^* = \langle X \mid Y_1, \ldots, Y_k \rangle$ is $C'(\alpha)$ with $\alpha < \frac{1}{20}$. Furthermore, every hyperplane $U$ of every $Y_i$ satisfies $\diameter{N(U)} < \alpha \systole{Y_i}$.
Since $\alpha < \frac{1}{14}$, \Cref{lem:1/14 hyperbolic} ensures that $\pi_1X^*$ is hyperbolic. 

Next, we check the hypotheses of \Cref{Thm:C'20Proper}. We have verified that with overwhelming probability, $X^*$ is $C'(\frac{1}{20})$ and that every $Y_i$ is compact and deformation retracts to a closed--geodesic.
By \Cref{Prop:QITranslation}, every hyperplane  $U \subset Y_i$ has a carrier $N(U)$ of diameter strictly less than $\alpha \systole{Y_i}$, which implies that $N(U)$ is embedded. Since $U$ must cut the closed--geodesic $w_i$, we also conclude that $Y_i \setminus U$ is contractible. Finally, the same argument as in \Cref{Lem:Primitive} implies that with overwhelming probability, every $g_i$ is  primitive. 
Therefore,  \Cref{Thm:C'20Proper} ensures that $\pi_1 X^*$ acts freely and cocompactly on the CAT(0) cube complex dual to the wallspace on $\widetilde{X^*}$. 
\end{proof}

\begin{rem}\label{Rem:ProbabilisticQITranslation}
In the above proof of \Cref{Thm:OtherSample}, all of the probabilistic arguments happen inside \Cref{Cor:SmallCancelUpsilon}, which combines Propositions~\ref{Prop:GeneralizedConePiece} and~\ref{Prop:GeneralizedWallPiece}. 
By contrast, \Cref{Prop:QITranslation} involves a global assumption (a $G$--equivariant $\lambda$--quasiisometry), and the proof works entirely in the language of coarse geometry without invoking any counting or probability. 
One can envision a strengthening of \Cref{Prop:QITranslation} that tracks the behavior of a typical conjugacy class and a typical piece. 

To make this precise, choose a basepoint $x \in \widetilde X$ and let $B_n(\widetilde X) = \{ g \in G : \dist_{\widetilde X}(x, gx) \leq n\}$.
Define $B_n(\Upsilon)$ similarly, and
recall from \Cref{Def:Growth} that $f_{G, \Upsilon}(n)$ counts the cardinality of $B_n(\Upsilon)$.
Define the \emph{mean distortion of $\widetilde X$ with respect to $\Upsilon$} to be
\begin{equation*}\label{Eqn:MeanDistortion}
\tau(\widetilde X / \Upsilon) \: = \: \lim_{n \to \infty} \frac{1}{f_{G,\Upsilon}(n)} \sum_{g \in B_n(\Upsilon)} \!\! \frac{\dist_{\widetilde X}(x, gx)}{n}
  \: = \: \lim_{n \to \infty} \frac{1}{f_{G,\Upsilon}(n)} \sum_{g \in B_n(\Upsilon)} \!\! \frac{\dist_{\widetilde X}(x, gx)}{\dist_{\Upsilon}(\upsilon, g\upsilon)}.
\end{equation*}
In words, $\tau(\widetilde X / \Upsilon)$ measures the average  factor by which an element of $G$ sampled using $\Upsilon$ gets stretched in $\widetilde X$.
See~\cite[Eqn (1.1) and Thm 1.2]{CantrellTanaka:ManhattanCurve} for a proof that the limit exists.

The mean distortion can be bounded as follows. Suppose there is a $G$--equivariant $\lambda$--quasiisometry $f \from \widetilde X \to \Upsilon$, as in
\Cref{Eqn:LambdaRestate}, so that $f(x) = \upsilon$. Then
\[
\frac{1}{\lambda_1} \leq \frac{1}{\tau(\widetilde X / \Upsilon)} \leq \lambda_2.
\]

For a chosen $\epsilon > 0$, an element $g \in G$ or a conjugacy class $[g]$ is called \emph{$\Upsilon$--typical}  if 
\begin{equation}\label{Eqn:UpsilonTypical}
\left(\tau(\widetilde X / \Upsilon) - \epsilon\right) \stabletrans{g}_{{}_\Upsilon} \leq \stabletrans{g}_{{}_X} \leq 
\left(\tau(\widetilde X / \Upsilon) + \epsilon\right) \stabletrans{g}_{{}_\Upsilon}.
\end{equation}
For each $\epsilon > 0$, a conjugacy class sampled uniformly from all those of $\Upsilon$--length at most $\ell$ will be $\Upsilon$--typical with overwhelming probability. Indeed, if $\Upsilon$ is the Cayley graph of $G$ with respect to some generating set, this follows from a large 
deviation result of Cantrell and Tanaka~\cite[Thm 4.23]{CantrellTanaka:ManhattanCurve}. If $\Upsilon$ is itself a CAT(0) cube complex, this follows from a large deviation theorem of Cantrell and Reyes~\cite[Thm 1.4]{CantrellReyes:RigidityCubes}.
For general $\Upsilon$, the referee informs us
 that this can be derived from Cantrell and Reyes~\cite[Eqn (5.2)]{CantrellReyes:MarkedLengthSpectrum}.

We now discuss the prospects for removing $\lambda$ from the statement of \Cref{Prop:QITranslation}. The proof begins by observing that every conjugacy class $[g_i]$ satisfies $ \stabletrans{g_i}_{{}_X} \geq  (\lambda_2)^{-1} \stabletrans{g_i}_{{}_\Upsilon}$; see \Cref{Eqn:SystoleComparison}. Since $[g_i]$ is $\Upsilon$--typical with overwhelming probability, we may replace $(\lambda_2)^{-1}$ by $\big( \tau(\widetilde X / \Upsilon) - \epsilon \big)$.
Next, the proof of \Cref{Prop:QITranslation} uses the constant $\lambda_1$ to pass from the diameter of a piece in $X^*$ to the diameter of a loose piece in $\Upsilon$; see \Cref{Eqn:PieceDistortionUpsilon}. If one knew that a cone-piece or wall-piece in $X^*$, coming from a random presentation as in the statement of \Cref{Thm:OtherSample}, also corresponds to an $\Upsilon$--typical group element of $G$, 
one could replace $\lambda_1$ by $\big(\tau(\widetilde X / \Upsilon) + \epsilon \big)$. The upshot would be that the product $\lambda = \lambda_1 \lambda_2$ would be replaced by the quotient $\big(\tau(\widetilde X / \Upsilon) + \epsilon \big) /\big( \tau(\widetilde X / \Upsilon) - \epsilon \big)$, which approaches $1$ as $\epsilon \to 0$.

Whether the pieces coming from a random presentation (sampled using lengths in $\Upsilon$) can be represented by $\Upsilon$--typical group elements is an interesting problem. See  \Cref{Ques:UpsilonTypicalPieces}.
\end{rem}

\section{Pentagonal surfaces}\label{Sec:Pentagonal}

Throughout this section, we consider the setting where $\Upsilon = \HH^2$ is the hyperbolic plane, equipped with a tiling $T$ by regular right-angled pentagons, and $\widetilde X$ is the square complex dual to this tiling. We work out the optimal constant $\lambda$ in a $\lambda$--quasiisometry from  $\widetilde X$ to $\Upsilon$. This can be considered the first interesting example where \Cref{Thm:OtherSample} applies.

\begin{prop}\label{Prop:PentagonalQI}
Let $T$ be the tiling of $\HH^2$ by regular right-angled pentagons, with hyperbolic metric $\dist_\HH$. The dual tiling $T^*$, with five quadrilaterals at every vertex, can be identified with a CAT(0) square complex $\widetilde X$, with combinatorial metric $\dist_{\widetilde X}$. Then the identity map $\operatorname{id} \from (\widetilde X, \dist_{\widetilde X}) \to (\HH^2, \dist_\HH)$ is a $\lambda$--quasiisometry, where
\[
\lambda = \frac{  \arccosh(2 K^2 + 2K + 1) }{\arccosh(K+1) } \approx 1.5627 
\qquad \text{for} \qquad K = \cos \big( \tfrac{2\pi}{5} \big).
\]
Furthermore, this value of $\lambda$ is optimal.
\end{prop}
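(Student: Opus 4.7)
The plan is to pin down the asymptotic bilipschitz constants $\lambda_1,\lambda_2$ by exhibiting extremal periodic axes in $\HH^2$ that realize the claimed ratios, and then arguing that no other axis does better. Because $\dist_{\widetilde X}(x,y)$ equals the number of hyperplanes of $\widetilde X$ separating $x$ and $y$, which is the same as the number of edges of $T$ that the $\HH^2$-geodesic from $x$ to $y$ crosses, the ratio $\dist_\HH(x,y)/\dist_{\widetilde X}(x,y)$ equals the average length of a pentagon chord traversed along that geodesic. The goal is to show that this average lies in $[\arccosh(K+1),\arccosh(2K^2+2K+1)]$ asymptotically.

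First I do explicit hyperbolic trigonometry in the regular right-angled pentagon. Standard formulas (applied to the right triangle with angles $\pi/5,\pi/4,\pi/2$ cut out by the apothem) yield $\cosh(s) = 1+2K$ for the side length $s$ and $\cosh(r) = 1/\sqrt{1-K}$ for the apothem $r$. The hyperbolic law of cosines applied to the isoceles triangle formed by the pentagon center $c$ together with two edge-midpoints $m_i,m_j$, both at distance $r$ from $c$ and subtending angle $2\pi|i-j|/5$, then gives
\[
\dist_\HH(m_i,m_{i+1}) = \arccosh(K+1), \qquad \dist_\HH(m_i,m_{i+2}) = \arccosh(2K^2+2K+1);
\]
the simplification of the latter uses the identity $\cos(\pi/5)=1-2K^2$, which is equivalent to the minimal polynomial relation $4K^2+2K-1=0$ satisfied by $K=\cos(2\pi/5)$. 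In the dual cube complex $\widetilde X$ with its $\ell^1$ combinatorial metric, each midpoint $m_{ij}$ sits at the midpoint of an edge of $\widetilde X$ incident to the vertex $P$ corresponding to its pentagon, so $\dist_{\widetilde X}(m_{01},m_{12}) = \dist_{\widetilde X}(m_{01},m_{23}) = \tfrac12+\tfrac12 = 1$.

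Next I extend each of the midpoint-to-midpoint chords in a single pentagon to a bi-infinite geodesic in $\HH^2$. For the 2-apart midpoint chord in a pentagon $P$, its continuation past $m_{23}$ into the adjacent pentagon $P'$ enters $P'$ at the analogous edge-midpoint, and by the 5-fold symmetry of $P'$ together with the reflection across the shared edge, the straight continuation in $P'$ is a 2-apart midpoint chord of opposite handedness. Iterating produces a periodic bi-infinite geodesic $\ell_{\max}$ along which every pentagon contributes a chord of length $\arccosh(2K^2+2K+1)$ while combinatorial translation grows by exactly $1$ per pentagon, so the asymptotic ratio is $\arccosh(2K^2+2K+1)$. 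An analogous construction through adjacent midpoints produces $\ell_{\min}$ realizing $\arccosh(K+1)$. These constructions establish the lower bound $\lambda \geq \arccosh(2K^2+2K+1)/\arccosh(K+1)$.

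The main obstacle is extremality: showing that no straight geodesic in $\HH^2$ has asymptotic average chord length outside $[\arccosh(K+1),\arccosh(2K^2+2K+1)]$. Individual chords can be both longer (up to the pentagon diagonal $D=\arccosh(\phi^2)$, nearly vertex-to-vertex) and much shorter (near a shared vertex of $T$), but these extremes cannot be sustained periodically. A near-diagonal chord forces the geodesic to pass close to a vertex $v$ of $T$ and hence to cross the other three pentagons at $v$ with very short chords, dragging the local average back down. The plan is to formalize this trade-off by classifying the sustained periodic configurations via the angle-matching constraint imposed by the right angles at vertices of $T$, then optimizing the chord length as a function of the geodesic's transverse offset from the pentagon center to conclude that the extrema are exactly the midpoint configurations already exhibited. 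Combined with the lower bound from the previous paragraph, this yields the optimal value $\lambda = \arccosh(2K^2+2K+1)/\arccosh(K+1)$ as claimed.
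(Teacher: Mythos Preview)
Your trigonometry and your sharpness constructions are correct and essentially match the paper's: the paper also computes the two midpoint-to-midpoint chord lengths $c=\arccosh(K+1)$ and $d=\arccosh(2K^2+2K+1)$ and observes that each extends to a bi-infinite hyperbolic geodesic realizing the asymptotic ratios $c$ and $d$.

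The gap is exactly where you flag it. Your final paragraph is a plan, not a proof, and two features of that plan are problematic. First, you propose to analyze only ``sustained periodic configurations''; but the inequality must hold (up to additive error) for \emph{every} pair of vertices, so a classification of periodic geodesics is not enough unless you also supply an argument that arbitrary geodesics are asymptotically no worse than periodic ones. Second, ``optimizing the chord length as a function of the geodesic's transverse offset'' is not a well-posed single-variable problem: the chord length depends on which pair of sides the geodesic meets, and the real content is a combinatorial bookkeeping of how short chords are compensated by neighboring long ones. The paper carries this out explicitly: it partitions a hyperbolic geodesic $\gamma$ not by pentagon chords but by the \emph{altitudes} of the pentagons it crosses, classifies the resulting altitude-to-altitude segments into five combinatorial types, and proves a borrowing/lending lemma showing that whenever a segment is shorter than $c$ (only type~I can be), an adjacent segment of type II, III, or IV donates enough length that the running average stays above $c$. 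This is a genuine case analysis with several subcases; your sketch does not supply it.

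Separately, you are making the upper bound harder than necessary. You frame both directions as ``the average hyperbolic chord length along a $\HH^2$-geodesic lies in $[c,d]$'', which forces a trade-off argument on both sides. The paper handles $\dist_\HH \le d\cdot\dist_{\widetilde X}+\epsilon$ in one line: take a \emph{combinatorial} geodesic $w$ in $\widetilde X$, replace each pair of consecutive half-edges by the straight hyperbolic shortcut between edge-midpoints (each of length $c$ or $d$, hence at most $d$), and use that $\dist_\HH$ is bounded by the hyperbolic length of this shortcut path. No averaging or trade-off is needed for that direction.
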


As mentioned in the introduction, combining  \Cref{Thm:OtherSample} with \Cref{Prop:PentagonalQI} and the classical work of Huber \cite{Huber} yields a proof of \Cref{Cor:H2Action}.

The proof of \Cref{Prop:PentagonalQI} is entirely elementary and largely pictorial; see Figures~\ref{Fig:PentagonLengths}--\ref{Fig:LengthThreeSegments}. The proof also trades the coarse geometry that has dominated most of this paper for the fine geometry of $\HH^2$. We begin with \Cref{Lem:PentagonLengths}, which computes a number of lengths in a single right-angled pentagon using the hyperbolic laws of sines and cosines \cite{Fenchel:HyperbolicGeometry, Ratcliffe:Foundations}. We then continue with \Cref{Prop:PentagonalRestate}, which considers a number of combinatorial possibilities for how a hyperbolic geodesic segment can cross a sequence of several adjacent pentagons.

\begin{lem}\label{Lem:PentagonLengths}
Let $P \subset \HH^2$ be a regular right-angled pentagon. Label lengths as in \Cref{Fig:PentagonLengths}. Set $K = \cos \big( \frac{2\pi}{5} \big)$. Then the labeled lengths can be expressed as follows:
\begin{align}
a &= \arccosh \big( \sqrt{K+1} \big) 
 \approx 0.5306. \label{Eqn:ALength} \\
b &= \arccosh \left( \tfrac{1}{\sqrt{1-K}} \right) \approx 0.6269. \label{Eqn:BLength} \\
c &= \arccosh(K+1) \approx  0.7672. \label{Eqn:CLength} \\
d &= \arccosh(2 K^2 + 2K + 1) \approx 1.1989. \label{Eqn:DLength} \\
e &= \arccosh \big( (2K+1)\sqrt{K+1} \big) \approx 1.2265. \label{Eqn:ELength} \\
f &= \arccosh(4K^2 + 4K+1) \approx 1.6169. \label{Eqn:FLength}
\end{align}

\begin{figure}
\begin{overpic}[width=2in]{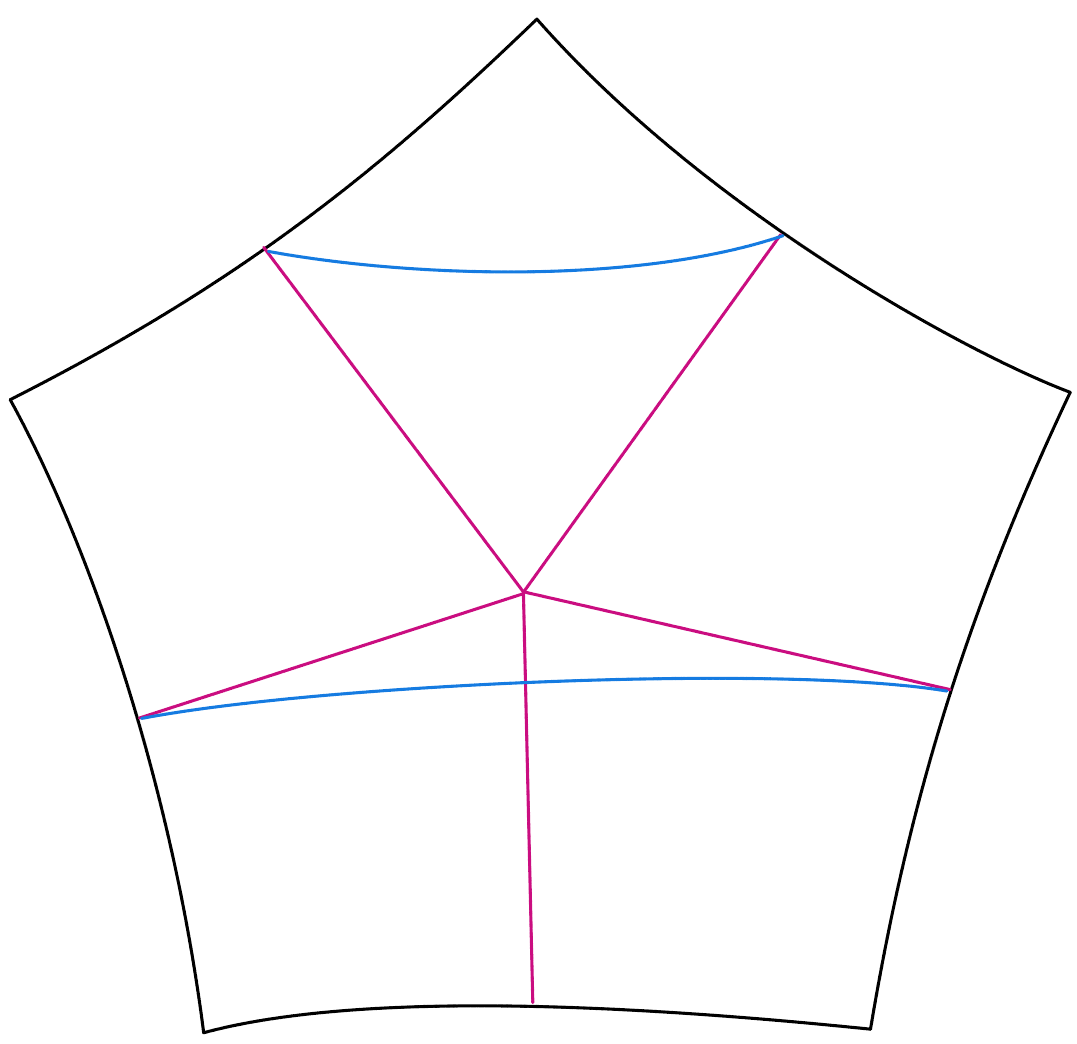}
\put(35,-2){$a$}
\put(60,-2){$a$}
\put(12,13){$a$}
\put(84,13){$a$}
\put(4,42){$a$}
\put(93,42){$a$}
\put(12,69){$a$}
\put(84,69){$a$}
\put(35,86){$a$}
\put(61,86){$a$}
\put(36,60){$b$}
\put(58,60){$b$}
\put(28,37){$b$}
\put(70,38){$b$}
\put(48,73){$c$}
\put(30,27){$d/2$}
\put(61,28){$d/2$}
\put(47,47){$\theta$}
\end{overpic}
\begin{overpic}[width=2.8in]{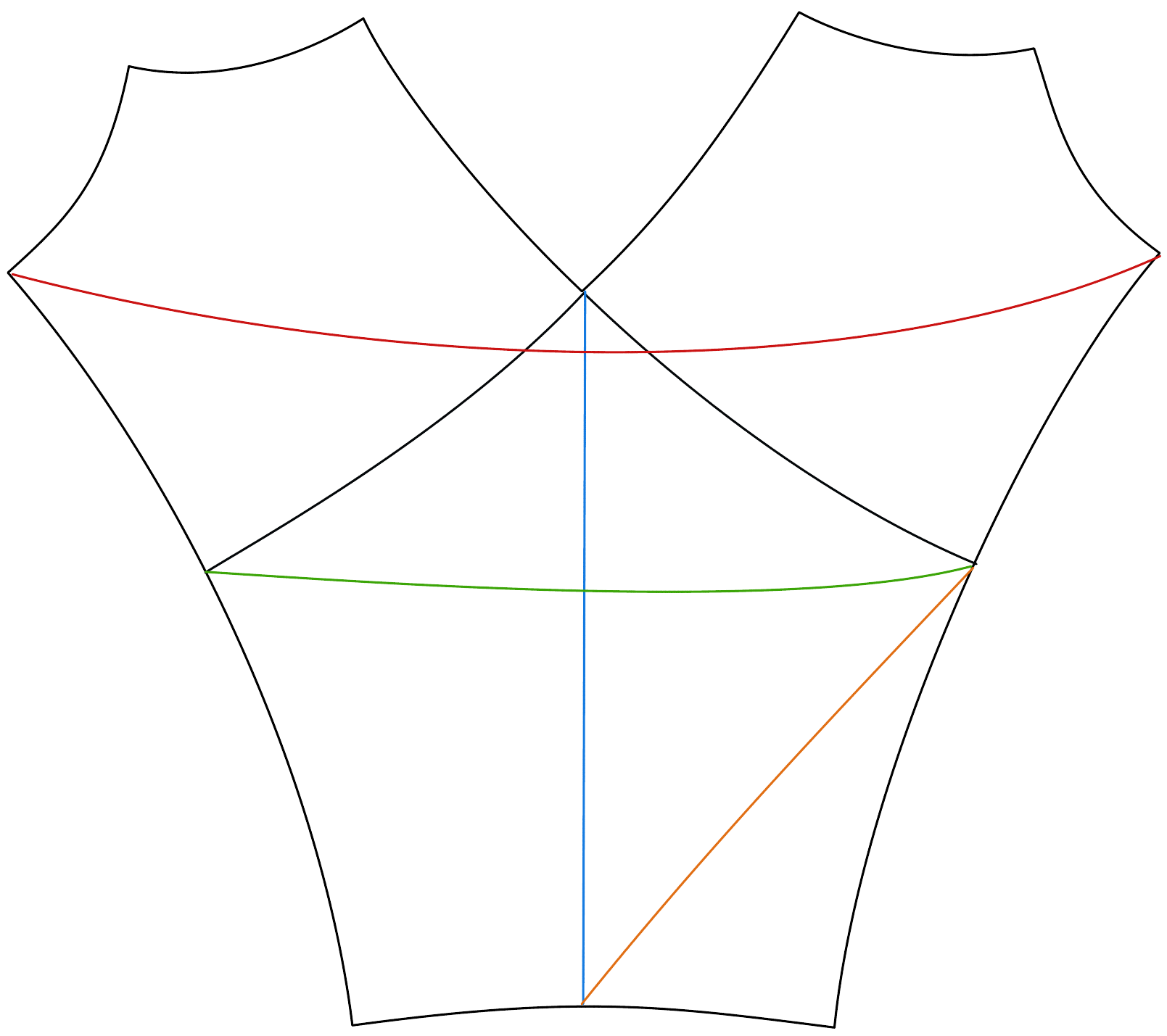}
\put(48,-2){$2a$}
\put(19,19){$2a$}
\put(77,19){$2a$}
\put(5,49){$2a$}
\put(90,49){$2a$}
\put(37,48){$2a$}
\put(60,48){$2a$}
\put(60,20){$e$}
\put(33,34){$f/2$}
\put(60,33.5){$f/2$}
\put(23,63){$g$}
\put(77,63){$g$}
\end{overpic}
\caption{Lengths in a right-angled pentagon, as computed in \Cref{Lem:PentagonLengths}. }
\label{Fig:PentagonLengths}
\end{figure}

Furthermore, the length $g$ of a geodesic segment contained in two adjacent right-angled pentagons, as in \Cref{Fig:PentagonLengths}, satisfies
\begin{align}
2g = \arccosh(1+2K (8 K^2+8K+1)^2) \approx 3.1838. \label{Eqn:GLength}
\end{align}
\end{lem}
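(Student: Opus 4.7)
My plan is to compute every length by hyperbolic trigonometry applied to triangles obtained by subdividing the pentagon (or the pair of pentagons) via geodesics through its center. The regular right-angled pentagon $P$ decomposes into ten congruent right triangles obtained by joining the center $O$ to each vertex and to each midpoint of a side. Each such triangle has a right angle at the midpoint, angle $\pi/5$ at $O$ (ten equal sectors at the center), and angle $\pi/4$ at the pentagon vertex (since the interior angle $\pi/2$ is bisected). The double-angle formula gives $\cos^2(\pi/5) = (K+1)/2$ and $\sin^2(\pi/5) = (1-K)/2$, so every trigonometric quantity at these apex angles is an elementary function of $K$.

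First I will apply the standard hyperbolic right-triangle identities $\cosh(\text{hyp}) = \cot A \cot B$ and $\sinh(\text{opposite to } A) = \sin A \cdot \sinh(\text{hyp})$ inside this triangle. This yields the circumradius $\cosh R = \sqrt{(K+1)/(1-K)}$, the apothem $\cosh r = 1/\sqrt{1-K}$, and the half-side length $\cosh a = \sqrt{K+1}$. The last of these is exactly \eqref{Eqn:ALength}, and the apothem computation (identifying $b$ with $r$, as the figure indicates) is \eqref{Eqn:BLength}.

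Next, each of $c, d, e, f$ is the length of a geodesic between a pair of points of $P$ whose distances from $O$ are known (each is either $r$ or $R$) and which subtend at $O$ an angle of $2\pi/5$, $3\pi/5$, or $4\pi/5$. I will apply the hyperbolic law of cosines
\[
\cosh(\text{third side}) = \cosh(\text{leg}_1)\cosh(\text{leg}_2) - \sinh(\text{leg}_1)\sinh(\text{leg}_2)\cos(\text{apex angle})
\]
in the triangle at $O$. This produces a raw rational expression in $K$ (multiplied by $\sqrt{K+1}$ in the case of $e$). For instance, if $c$ is the distance between midpoints of two adjacent sides (apex angle $2\pi/5$), I obtain $\cosh c = \cosh^2 r - \sinh^2 r \cos(2\pi/5) = 1/(1-K) - K^2/(1-K) = 1+K$, which is \eqref{Eqn:CLength}; the other three cases are structurally identical. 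To reduce the raw rational expressions to the clean polynomial forms stated in \eqref{Eqn:DLength}--\eqref{Eqn:FLength}, I will invoke the minimal polynomial identity $4K^2 + 2K - 1 = 0$, which follows immediately from $K = (\sqrt{5}-1)/4$. For example, the raw calculation gives $\cosh f = (2K^2 + 2K + 1)/(1-K)$, and clearing using $4K^2 + 2K = 1$ delivers $\cosh f = (2K+1)^2 = 4K^2 + 4K + 1$.

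For \eqref{Eqn:GLength}, I place the two adjacent pentagons symmetrically, so that the two centers $O_1, O_2$ lie on the perpendicular bisector of the shared edge, each at distance $r$ from its midpoint. The endpoints of the length-$2g$ geodesic are specified points of the two pentagons whose distances from $O_1$ are computable from the previous data together with hyperbolic sum formulas such as $\cosh(2r + R) = \cosh(2r)\cosh R + \sinh(2r)\sinh R$. A final law-of-cosines computation at $O_1$ then yields $\cosh(2g)$, and the same minimal polynomial identity (in particular its consequence $8K^2 + 8K + 1 = 3 + 4K$) simplifies the result to the stated form. The main obstacle throughout the proof is pure bookkeeping: each trigonometric step is routine, but converting the raw rational-function-of-$K$ expressions into the polished polynomial forms requires careful repeated use of the defining relation $4K^2 + 2K = 1$.
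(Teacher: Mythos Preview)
Your approach is correct and will yield all the stated values, but it differs from the paper's in an instructive way. For $a$ and $b$ the two routes are essentially equivalent (the paper phrases the computation via the five Lambert quadrilaterals rather than the ten right triangles, but these are the same data). The substantive divergence is in $c$, $e$, and $f$: you route everything through the center $O$ and the law of cosines, which forces you to simplify rational functions of $K$ using the minimal polynomial $4K^2+2K-1=0$. The paper instead exploits the \emph{right angles at the pentagon's own vertices}: since adjacent sides meet at $\pi/2$, the segments $c$, $e$, $f$ are hypotenuses of right triangles with legs along the sides (lengths $a$, $2a$), and the hyperbolic Pythagorean identity $\cosh c=\cosh^2 a$, $\cosh e=\cosh(2a)\cosh a$, $\cosh f=\cosh^2(2a)$ gives the polynomial forms in $K$ immediately, with no appeal to the minimal polynomial at all. (For $d$ both proofs use the law of cosines at the center with angle $4\pi/5$, and the simplification there is a genuine polynomial identity, not a consequence of $4K^2+2K=1$.)

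For $g$ the paper again bypasses the center entirely: it observes that the endpoints of the $2g$ segment, together with the endpoints of the shared edge, form a Saccheri-type quadrilateral with base $2a$, two right angles at the base, and legs $4a$, and then applies a single closed formula (from Fenchel) for the summit length. Your proposed law-of-cosines computation at $O_1$ can be made to work, but your appeal to $\cosh(2r+R)$ presupposes that the far endpoint lies on the line $O_1O_2$; you should check from the figure whether this collinearity actually holds for the endpoints of $2g$ (it does for the vertex of pentagon~2 opposite the shared edge, but that may not be the relevant point). In any case the paper's quadrilateral approach is cleaner and avoids this issue.
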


\begin{proof}
In \Cref{Fig:PentagonLengths}, left, the right-angled pentagon $P$ is subdivided into five isometric quadrilaterals, arranged symmetrically about the center point of $P$. Each quadrilateral $Q$ has two sides of length $a$, two sides of length $b$, three right angles, and one angle of $\theta = \frac{2\pi}{5}$. The presence of three right angles makes $Q$ a \emph{Lambert quadrilateral} or  \emph{almost rectangular quadrilateral} in the terminology of Ratcliffe \cite[Sec 3.5]{Ratcliffe:Foundations}.
By \cite[Thm 3.5.9]{Ratcliffe:Foundations}, we have
\[
K = \cos \theta  = \sinh^2 a,
\]
hence $K+1 = \cosh^2 a$, implying \Cref{Eqn:ALength}.
By \cite[Thm 3.5.8]{Ratcliffe:Foundations}, we have
\[
\cosh^2 b = \left( \frac{\cos \theta \cdot \cos (\tfrac{\pi}{2}) + \cosh a}{\sin \theta \cdot \sin (\tfrac{\pi}{2})} \right)^2
 = \frac{\cosh^2 a}{\sin^2 \theta} = \frac{1+K}{1-K^2} = \frac{1}{1-K},
\]
implying \Cref{Eqn:BLength}.

The hyperbolic law of cosines \cite[Thm 3.5.3]{Ratcliffe:Foundations} implies the following version of the Pythagorean theorem as a special case. In a hyperbolic right triangle, with legs of length $x,y$ and hypotenuse of length $z$, the lengths satisfy
\begin{equation}\label{Eqn:HyperbolicPythagorean}
\cosh x \cdot \cosh y = \cosh z.
\end{equation}

Now, consider the distances between the midpoints of edges in the right-angled pentagon $P$. 
By \eqref{Eqn:HyperbolicPythagorean}, the midpoints of two adjacent edges are separated by distance $c$, where
\[
\cosh c = \cosh^2 a = \sinh^2 a + 1 = K + 1,
\]
implying \Cref{Eqn:CLength}.
By \cite[Thm 3.5.3]{Ratcliffe:Foundations}, the midpoints of two non-adjacent edges are separated by distance $d$, where
\[
\cosh d = \cosh^2 b - \sinh^2 b \cdot \cos (2\theta).
\]
Substituting $\cosh^2 b = \frac{1}{1-K}$ and $\sinh^2 b = \frac{K}{1-K}$, as well as $\cos(2 \theta) = 2K^2 -1$ gives
\[
\cosh d = \frac{1}{1-K} - \frac{K (2K^2 -1)}{1-K}  = 2 K^2 + 2K + 1,
\]
implying \Cref{Eqn:DLength}.

To compute the lengths $e$     and $f$, 
 observe that 
\[
\cosh a = \sqrt{\sinh^2 a +1} = \sqrt{K+1}
\qquad \text{and} \qquad
\cosh(2a) = 2 \sinh^2 a + 1 = 2K +1.
\]
Now, \eqref{Eqn:HyperbolicPythagorean} gives
\[
\cosh(e) = \cosh(2a) \cdot \cosh a =  (2K+1) \sqrt{K+1} ,
\]
implying \Cref{Eqn:ELength}. 
Similarly, \eqref{Eqn:HyperbolicPythagorean} gives
\begin{equation*}
\cosh(f) = \cosh(2a)^2 =  (2K+1)^2 = 4K^2 + 4K + 1,
\end{equation*}
implying \Cref{Eqn:FLength}.

Finally, we use \Cref{Fig:PentagonLengths} to compute the length $g$. In that figure, we have a hyperbolic quadrilateral with two right angles at the ends of a side of length $2a$, adjacent sides of length $4a$, and the fourth side of length $2g$. According to first formula in the last block of displayed equations on \cite[page 88]{Fenchel:HyperbolicGeometry}, we have
\begin{equation}\label{Eqn:Fenchel}
\cosh(2g) = - \sinh^2(4a) + \cosh^2(4a) \cosh(2a).
\end{equation}
Above, we have already computed that $\cosh(2a)=2K+1$, hence 
\[
\cosh(4a) = 2 \cosh^2(2a) - 1 = 8 K^2 + 8K + 1.
\]
Substituting all this into \Cref{Eqn:Fenchel} gives
\begin{align*}
\cosh(2g) & = - \sinh^2(4a) + \cosh^2(4a) \cosh(2a) \\
& = 1 - \cosh^2(4a) + \cosh(2a) \cosh^2(4a) \\
& = 1+ (\cosh(2a) -1) \cosh^2(4a) \\
& = 1 + 2K (8 K^2 + 8K + 1)^2,
\end{align*}
implying \Cref{Eqn:GLength}.
\end{proof}

For the rest of this section, the symbols $a, \ldots, g$ will always denote the constants computed in the above lemma.
Now, we can prove the following more global comparison between the cubical and hyperbolic metrics on $\HH^2$.

\begin{prop}\label{Prop:PentagonalRestate}
Let $T$ be the tiling of $\HH^2$ by regular right-angled pentagons, with hyperbolic metric $\dist_\HH$. The dual tiling $T^*$, with five quadrilaterals at every vertex, can be identified with a CAT(0) square complex $\widetilde X$, with combinatorial metric $\dist_{\widetilde X}$. Then there is a constant $\epsilon > 0$ such that for all $x,y \in \widetilde X^{0}$, the distances $\dist_\HH(x,y)$ and $\dist_{\widetilde X}(x,y)$ can be compared as follows:
\begin{align}
c \cdot \dist_{\widetilde X}(x,y) - \epsilon \: \leq \: \dist_\HH(x,y) \: \leq \: d \cdot \dist_{\widetilde X}(x,y) + \epsilon, \label{Eqn:HypLessCube}
\end{align}
where the constants $c,d$ are as in \Cref{Lem:PentagonLengths}. Furthermore, the multiplicative constants $c,d$ in \eqref{Eqn:HypLessCube} are sharp.
\end{prop}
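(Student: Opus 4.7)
The plan is to study the hyperbolic geodesic $\gamma = [x,y]$ in relation to the tiling $T$. I identify the vertices of $\widetilde X$ with the centers of the pentagons of $T$, and the hyperplanes of $\widetilde X$ with the bi-infinite hyperbolic geodesics extending the edges of $T$ (the \emph{walls}); then $n = \dist_{\widetilde X}(x,y)$ equals both the number of walls separating $x$ from $y$ and the number of transverse crossings of $\gamma$ with these walls, subdividing $\gamma$ into $n+1$ chord segments, each contained in a single pentagon.

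For the upper bound, I will fix a combinatorial geodesic $P_0,\dots,P_n$ in $\widetilde X$, write $e_i$ for the edge shared by $P_{i-1}$ and $P_i$ and $m_i$ for its midpoint, and compare $\gamma$ to the broken hyperbolic path $x \to m_1 \to \cdots \to m_n \to y$. By \Cref{Lem:PentagonLengths}, the first and last segments have length $b$, and each interior segment $[m_i,m_{i+1}] \subset P_i$ has length $c$ or $d$, according to whether $e_i$ and $e_{i+1}$ are adjacent or non-adjacent in $P_i$; the total is thus at most $dn + (2b-d)$, and the geodesic $\gamma$ is no longer. To show sharpness of $d$, I will construct an infinite straight strip of pentagons in $T$ whose central geodesic threads the midpoints of non-adjacent edges (produced as the orbit of a seed pentagon under the axial hyperbolic translation exchanging two non-adjacent edge midpoints); pentagon centers close to this geodesic realize $\dist_\HH/\dist_{\widetilde X} \to d$.

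The lower bound is the delicate half. With $p_i$ the crossing of $\gamma$ with the $i$th wall, I will split the interior chord $[p_i,p_{i+1}] \subset P_i$ into two types. A non-adjacent chord (with $e_i, e_{i+1}$ sharing no vertex of $P_i$) has length at least the perpendicular distance between two non-adjacent edges of a right-angled pentagon; the right angles at the intermediate vertex identify this common perpendicular as the intermediate edge itself, of length $2a > c$, so non-adjacent crossings contribute more than $c$ of hyperbolic length per crossing. An adjacent chord meeting at a common vertex $v$ can be arbitrarily short, but only when $\gamma$ \emph{clips} $v$ by passing close to it. Since $\gamma$ crosses each wall only once and exactly two walls meet at $v$, each vertex of $T$ is clipped at most once, and two consecutively clipped vertices must be separated along $\gamma$ by at least the minimum hyperbolic distance $f$ between two non-edge-adjacent vertices of $T$ (computed from \Cref{Lem:PentagonLengths}). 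The main obstacle will be to execute the resulting book-keeping tightly enough to recover the constant $c$ exactly: the crucial numerical input is $f > 2c$, which lets me trade the $\leq 2$ wall crossings consumed by each clip against the $\geq f$ of hyperbolic length between consecutive clips, yielding $\dist_\HH(x,y) \geq cn - \epsilon$. Sharpness of $c$ will be realized by sequences of pentagon centers whose geodesics asymptotically consist of adjacent midpoint-to-midpoint chords, each of length exactly $c$.
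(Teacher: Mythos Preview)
Your upper-bound argument is essentially the paper's: both take a combinatorial geodesic, replace it by a broken hyperbolic path through midpoints of consecutive edges, and observe that each midpoint-to-midpoint hop has length at most $d$. Sharpness of $d$ via the non-adjacent-midpoint axis is likewise the same.

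The lower bound has a real gap. Your key claim is that ``two consecutively clipped vertices must be separated along $\gamma$ by at least $f$,'' where a clip is an adjacent chord. But the extremal configuration that \emph{realizes} the constant $c$ consists entirely of adjacent chords: $\gamma$ runs midpoint-to-midpoint of adjacent edges, cutting off a corner in each pentagon. Two consecutive such chords share a wall crossing at the midpoint of the common edge $e$, and they clip the two \emph{endpoints} of $e$ --- which are edge-adjacent at distance $2a<f$. So the claim is false as stated. If you instead reserve ``clip'' for adjacent chords of length strictly less than $c$, that particular example is excluded (those chords have length exactly $c$), but now you owe a threshold-dependent analysis: you have not bounded adjacent chords of intermediate length, nor ruled out that two short clips are separated by a single long adjacent chord at an edge-adjacent vertex rather than by something of length $\geq f$. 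The ``trade $\leq 2$ crossings against $\geq f$ of length'' bookkeeping is too coarse to recover the sharp constant $c$.

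The paper avoids this by partitioning $\gamma$ not at wall crossings but at the \emph{altitudes} of the pentagons it passes through, so that each segment lives in two consecutive pentagons. There are then only five combinatorial types; four of them already have length $\geq c$, and the one short type cannot occur twice in a row and always pairs (or triples) with a neighbor so that the average exceeds $c$. The operative numerical inequalities are $3a>2c$, $g>2c$, and $2e>3c$, not $f>2c$. Your wall-based partition is a natural first attempt, but the altitude partition is what makes the case analysis close up sharply.
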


Observe that the constant $\lambda$ in the statement of \Cref{Prop:PentagonalQI} is exactly $d/c$. Thus \Cref{Prop:PentagonalRestate} implies \Cref{Prop:PentagonalQI}.

\begin{figure}
 \begin{overpic}[height=3.5in]{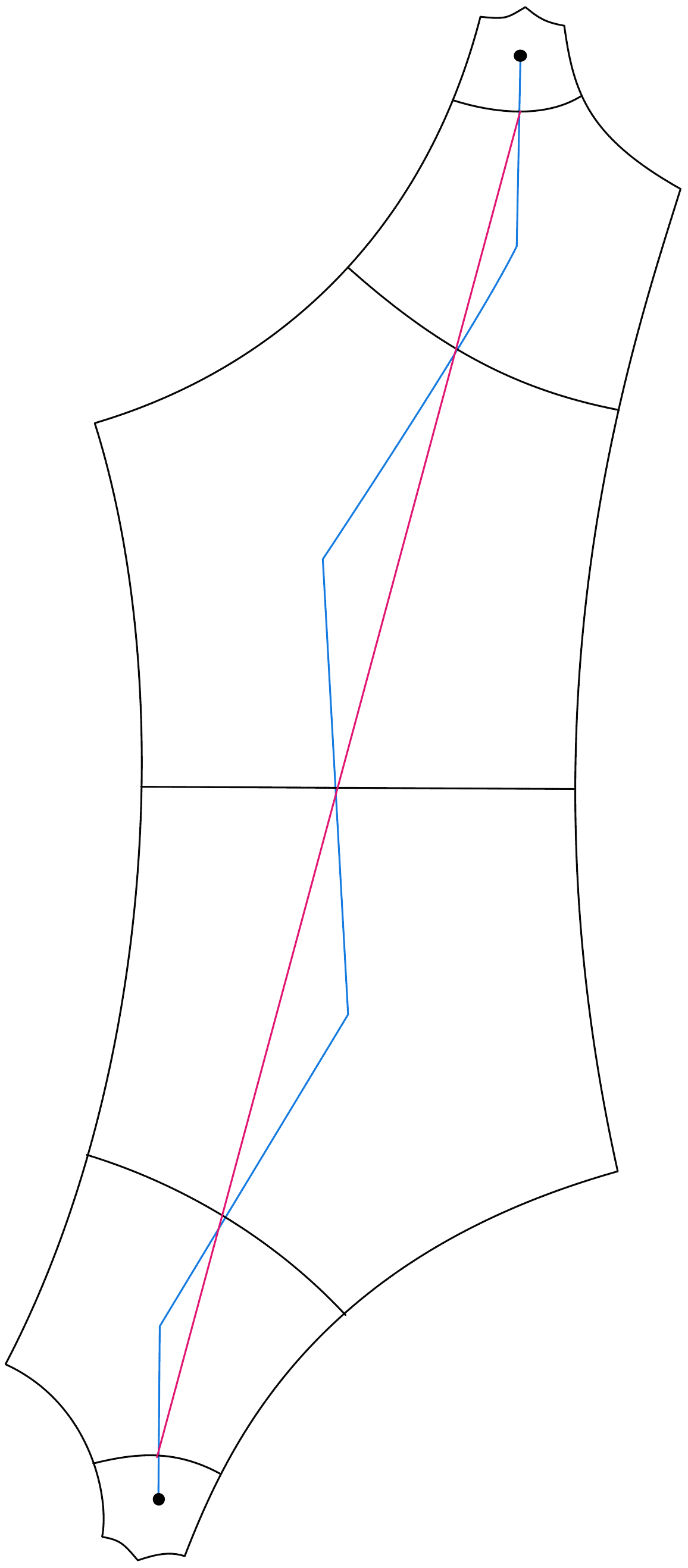}
 \put(7,3){$x$}
 \put(34,95){$y$}
 \put(16,60){$w$}
 \put(26,60){$\gamma$}
 \end{overpic}
\hspace{0.25in}
\begin{overpic}[height=3.5in]{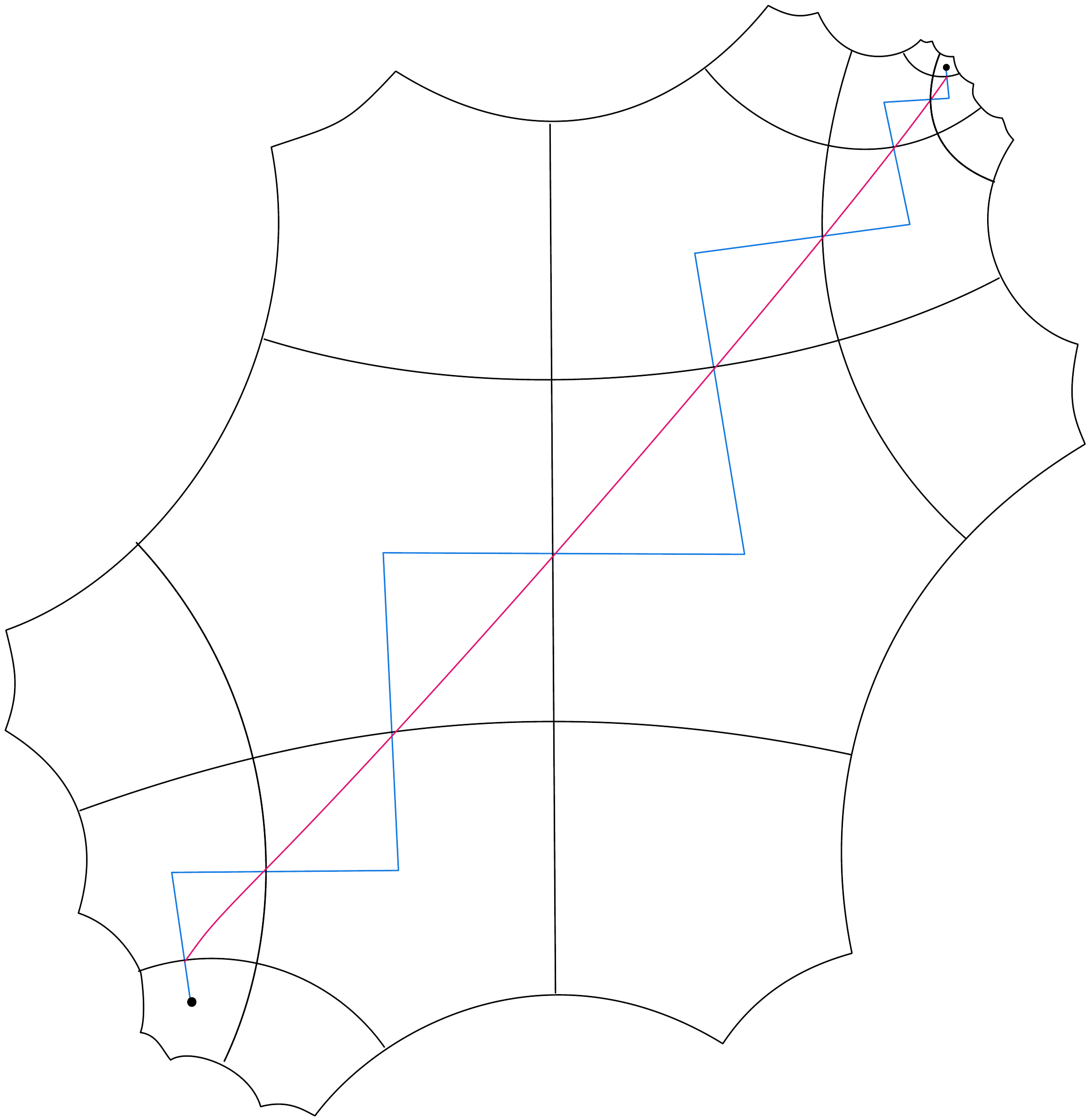}
\put(15,7){$x$}
\put(86,95){$y$}
\put(42,52){$w$}
\put(43,40){$\gamma$}
\end{overpic}
\caption{The blue path $w \to X$ is a combinatorial geodesic from $x$ to $y$. The pink path $\gamma$ is constructed by taking hyperbolic shortcuts between midpoints of consecutive edges of $w$. Each segment of $\gamma$ in a pentagon of $T$ has length $c$ or $d$. The sharpness of the constants $c$  and $d$ in \eqref{Eqn:HypLessCube} is demonstrated by the right and left panels, respectively.}
\label{Fig:Shortcut}
\end{figure}

\begin{proof}
We begin by proving the second inequality of \eqref{Eqn:HypLessCube}.
Let $w \to \widetilde X$ be a combinatorial geodesic in $\widetilde X$ with endpoints $x, y \in \widetilde X^{0}$. By choosing a sufficiently large additive constant $\epsilon$, we may assume without loss of generality that $|w| = \dist_{\widetilde X}(x,y) \geq 2$. In the hyperbolic metric on $\HH^2$, the combinatorial geodesic $w$ is a concatenation of two or more edges of the dual tiling $T^*$. By \Cref{Lem:PentagonLengths}, every edge of $w$ has hyperbolic length $2b$.

Since edges of $T^*$ meet at angles of $\theta = \frac{2\pi}{5}$ or $2\theta = \frac{4\pi}{5}$, we may homotope $w$ to a shorter piecewise-geodesic path $\gamma$
by constructing hyperbolic shortcuts between midpoints of consecutive edges. See \Cref{Fig:Shortcut}. Each such shortcut replaces two cubical half-edges (of combined cubical length $1$) by a hyperbolic segment of length either $c$ or $d$.
The first and last half-edges of $w$ remain as they are, and have hyperbolic length $b$. Since $c < d$, it follows that
\[
\dist_{\HH^2}(x, y) \: \leq \:  b + d (|w|-1) + b \: \leq \:   d |w| + \epsilon \: = \:  d \cdot \dist_{\widetilde X}(x,y) + \epsilon,
\]
for an appropriate value of $\epsilon$. 
Sharpness of the multiplicative constant $d$ holds because one may concatenate arbitrarily many segments of length $d$ in adjacent pentagons to form a hyperbolic geodesic. See the left panel of \Cref{Fig:Shortcut}.

By the same token, sharpness of the multiplicative constant $c$ in the first inequality of \eqref{Eqn:HypLessCube} holds because 
one may concatenate arbitrarily many segments of length $c$ in adjacent pentagons to form a hyperbolic geodesic, as in the right panel of \Cref{Fig:Shortcut}.


\begin{figure}[ht]
\begin{overpic}[width=2.5in]{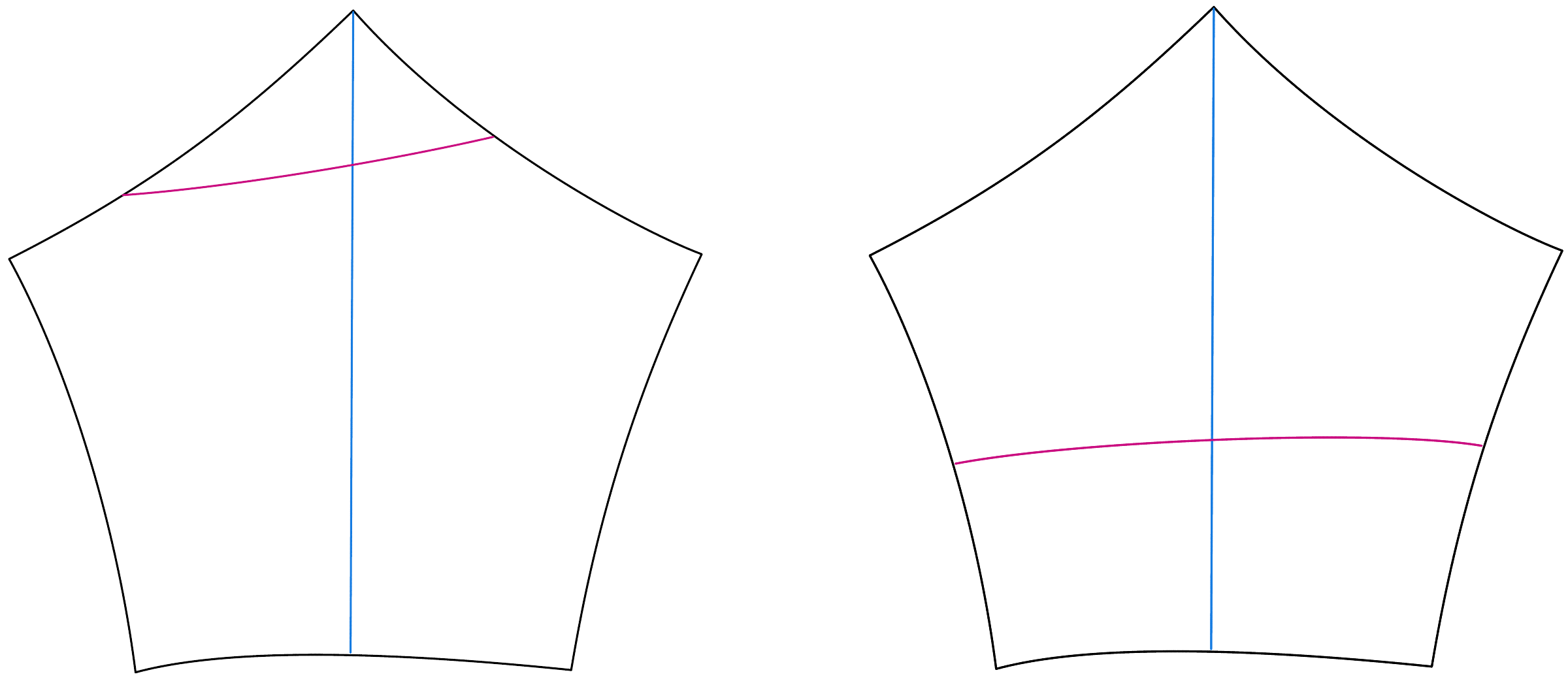}
\put(15,28){$s$}
\put(69,17){$s$}
\put(23,20){$\ell$}
\put(78,22){$\ell$}
\put(5,35){$E_1$}
\put(33,35){$E_2$}
\put(54,8){$E_1$}
\put(94,8){$E_2$}
\end{overpic}
\caption{The two combinatorial types of segment $s$ cutting through a right-angled pentagon, and the altitude $\ell$
altitude associated to each $s$.}
\label{Fig:Altitudes}
\end{figure}

To prove the first inequality of \eqref{Eqn:HypLessCube}, with its optimal multiplicative constant, we make the following definitions. An \emph{altitude} of a right-angled pentagon $P$ is a geodesic segment $\ell$ from a vertex to the midpoint of the opposite edge. If $s \subset P$ is a hyperbolic geodesic connecting interior points of sides $E_1, E_2$, the \emph{altitude associated to $s$} is the unique altitude $\ell$ with the property that reflection in $\ell$ interchanges $E_1$ with $E_2$. See \Cref{Fig:Altitudes}.

Let $\gamma \to \HH^2$ be a hyperbolic geodesic with endpoints $x, y \in \widetilde X^{0}$. Then $ \dist_{\widetilde X}(x,y)$ equals the number of hyperplanes that separate $x$ from $y$. Since the hyperplanes of $X$ are identified with the bi-infinite geodesics containing edges of $T$, it follows that $ \dist_{\widetilde X}(x,y)$ is the number $n$ of edges of $T$ crossed by  $\gamma$. After an arbitrarily small perturbation, affecting $\len_{\HH}(\gamma)$ by an additive error, we may assume that $\gamma$ is disjoint from $T^{0}$. 

In every pentagon $P$ that intersects $\gamma$ but does not contain the endpoints $x,y$, draw the altitude associated to $\gamma \cap P$. These $(n-1)$ altitudes partition $\gamma$ into a concatenation $\gamma_1 \gamma_2 \cdots \gamma_n$, where $\gamma_1$ is the segment from $x$ to the first altitude; $\gamma_n$ is the segment from the last altitude to $y$; and every remaining $\gamma_i$ connects the altitudes in adjacent right-angled pentagons. 
To prove  \eqref{Eqn:HypLessCube}, we will show that the \emph{average} length of a segment $\gamma_i$ for $0<i<n$ is at least $c \approx 0.7672$. 

\begin{figure}[h]
\begin{overpic}[width=2.5in]{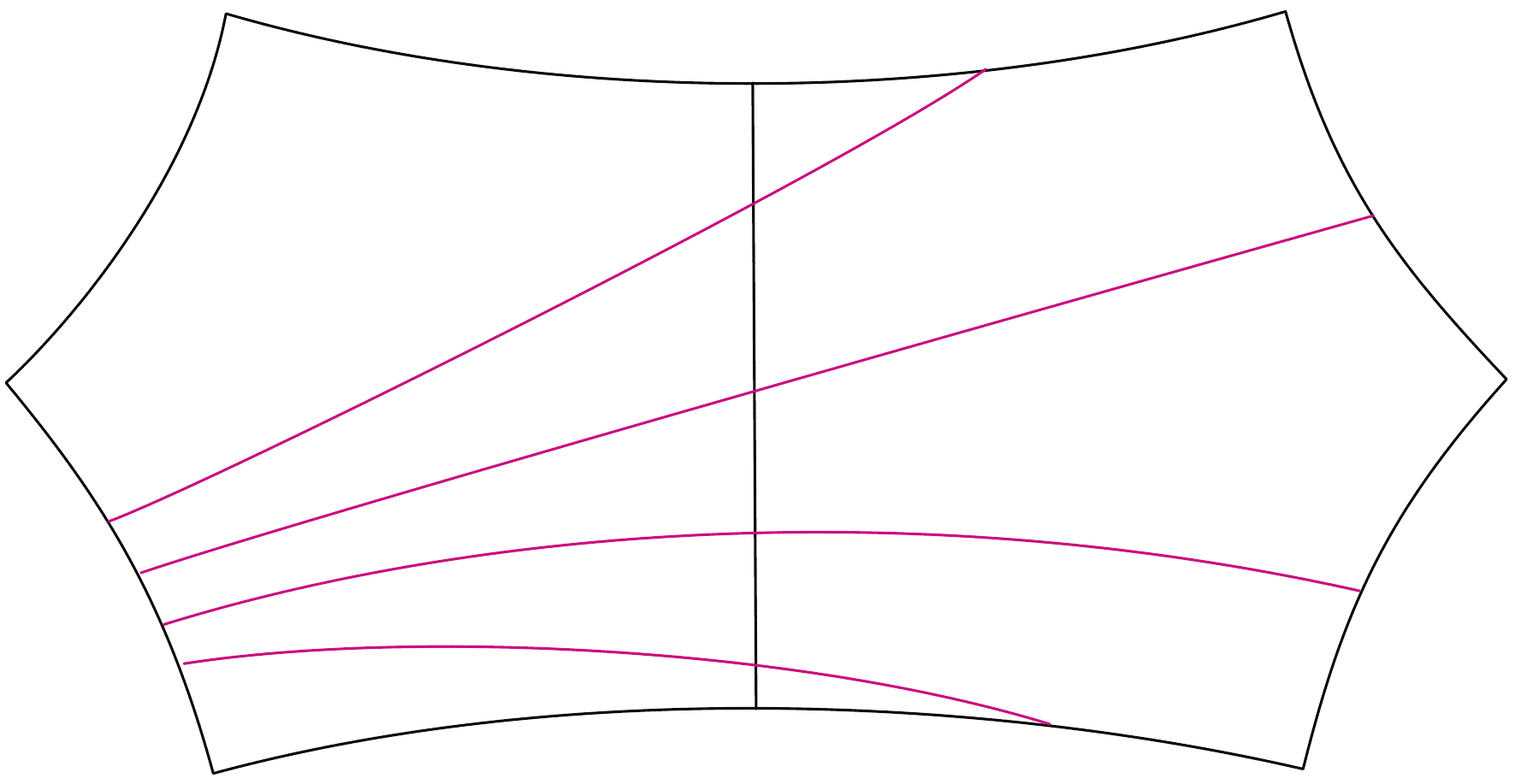}
\put(20,33){$P$}
\put(79,22){$P'$}
\put(68,-2){I}
\put(92,10){II}
\put(93,37){III}
\put(62,49){IV}
\end{overpic}
\hspace{0.25in}
\begin{overpic}[width=2.5in]{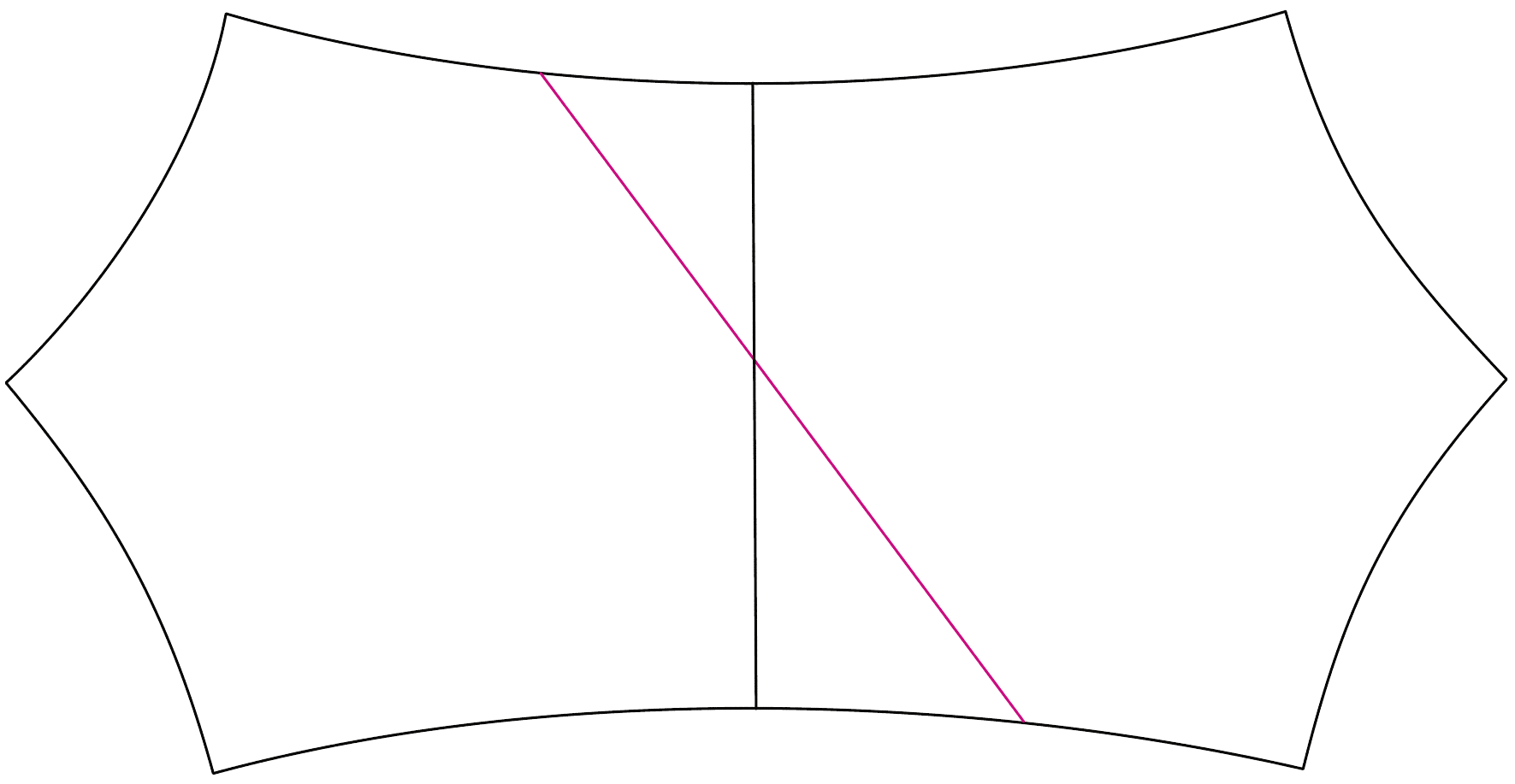}
\put(20,24){$P$}
\put(75,24){$P'$}
\put(65,-2){V}
\end{overpic}
\caption{The five combinatorial types of (unoriented) intersection between a hyperbolic geodesic and two adjacent pentagons $P,P'$. In types I--IV, the geodesic $\gamma$ enters through a side of $P$ that is \emph{not} adjacent to the shared side $P \cap P'$.
In type V, the geodesic $\gamma$ both enters and exits $P \cup P'$ through sides adjacent to the shared side $P \cap P'$.}
\label{Fig:Types}
\end{figure}

There are five combinatorial possibilities for a segment $\gamma_i$ between two altitudes, corresponding to five types of intersections between $\gamma$ and two adjacent pentagons. See \Cref{Fig:Types}. If $\ell$ and $\ell'$ are the altitudes in adjacent pentagons $P$ and $P'$, respectively, the convexity of distance functions implies that the shortest geodesic segment from $\ell$ to $\ell'$ meets each of them orthogonally or at an endpoint. Consequently, it is not hard to determine the shortest possible length from $\ell$ to $\ell'$; see 
 \Cref{Fig:TypesWithLengths}.

\begin{figure}[h]
\begin{overpic}[width=1in]{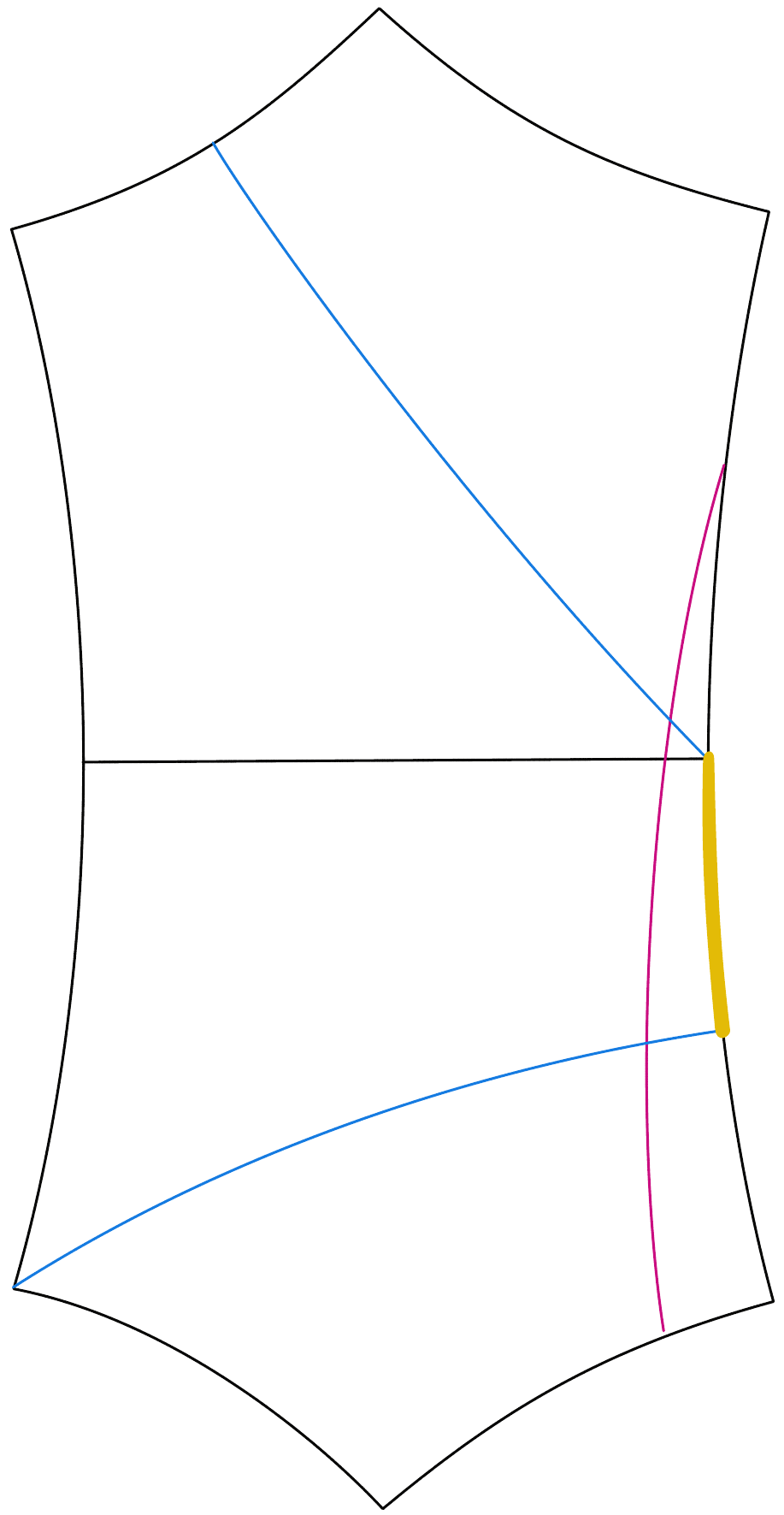}
\put(35,95){I}
\put(48,40){$a$}
\end{overpic}
   \hspace{0.17in}
\begin{overpic}[width=1in]{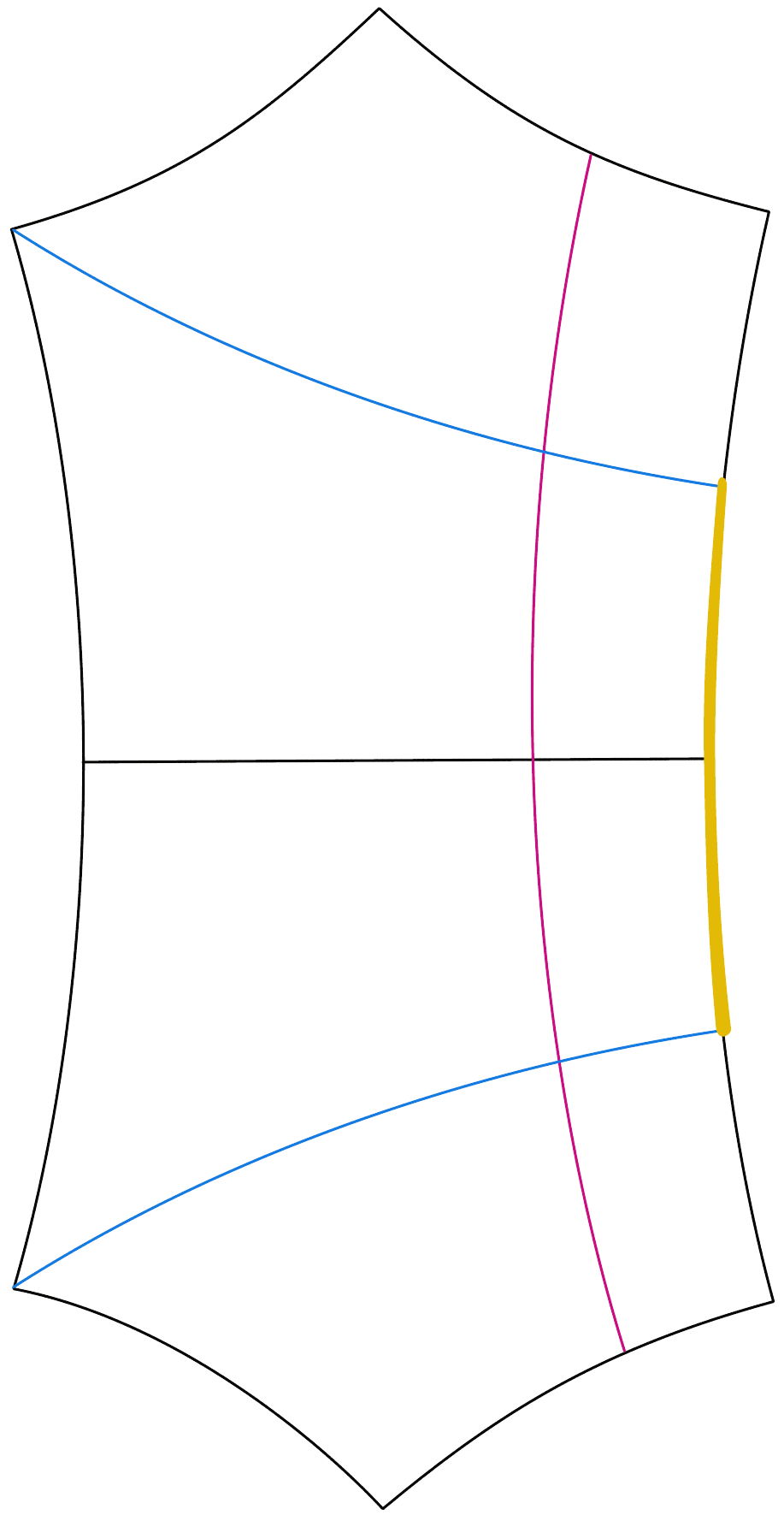}
\put(35,95){II}
\put(48,40){$a$}
\put(48,58){$a$}
\end{overpic}
 \hspace{0.17in}
\begin{overpic}[width=1in]{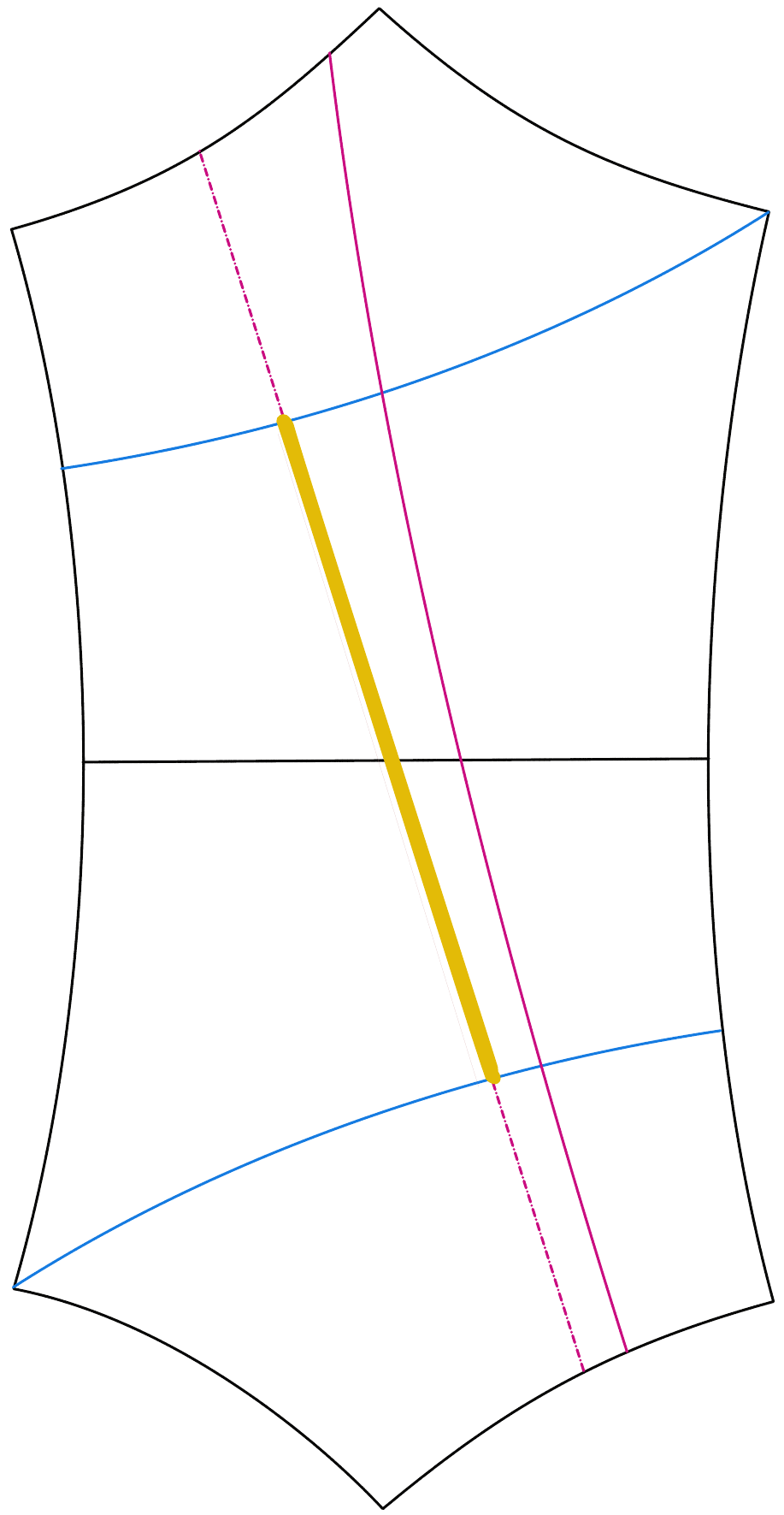}
\put(35,95){III}
\put(15,35){$d/2$}
\put(8,58){$d/2$}
\end{overpic}
\hspace{0.17in}
\begin{overpic}[width=1in]{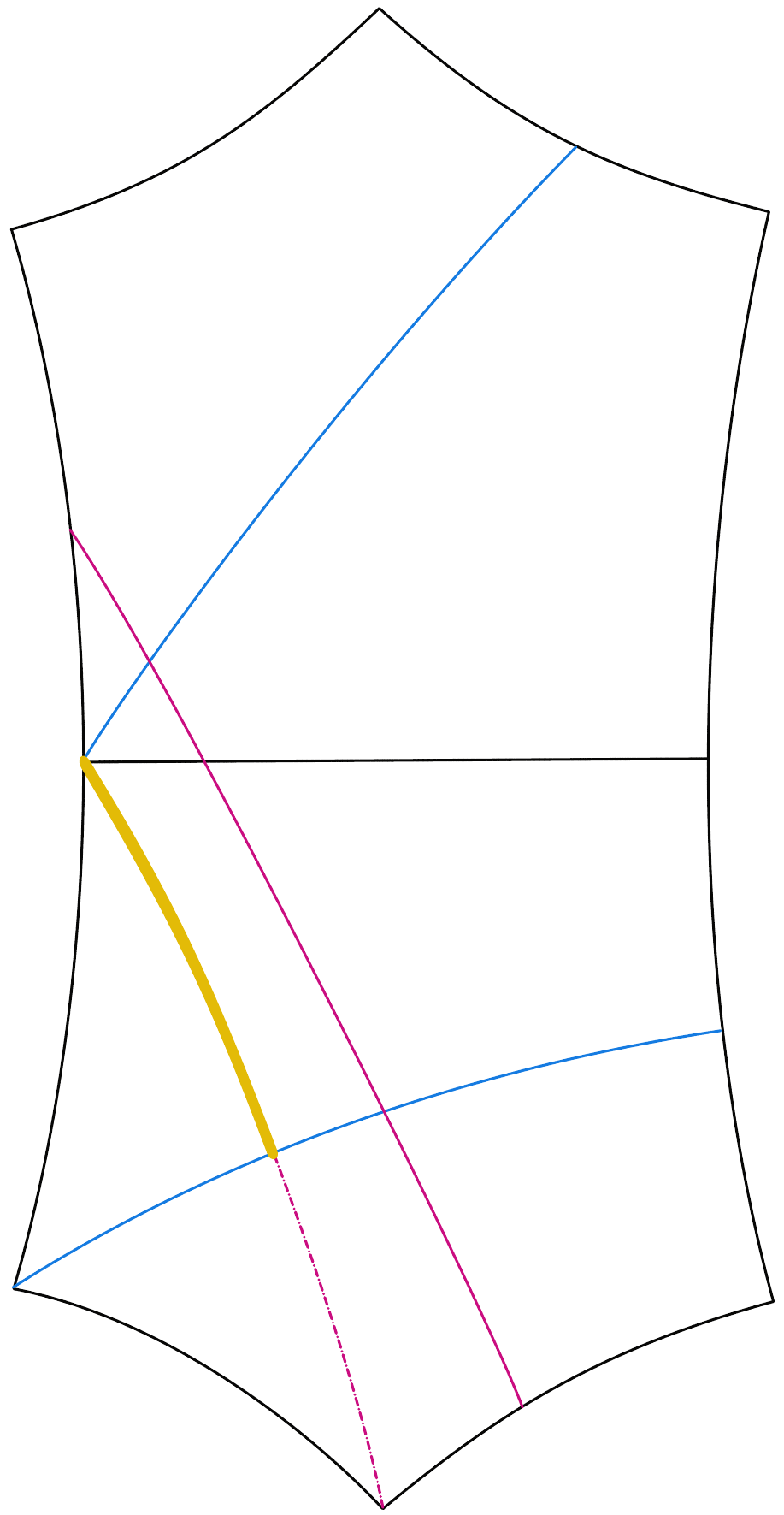}
\put(35,95){IV}
\put(15,35){$f/2$}
\end{overpic}
\hspace{0.17in}
\begin{overpic}[width=1in]{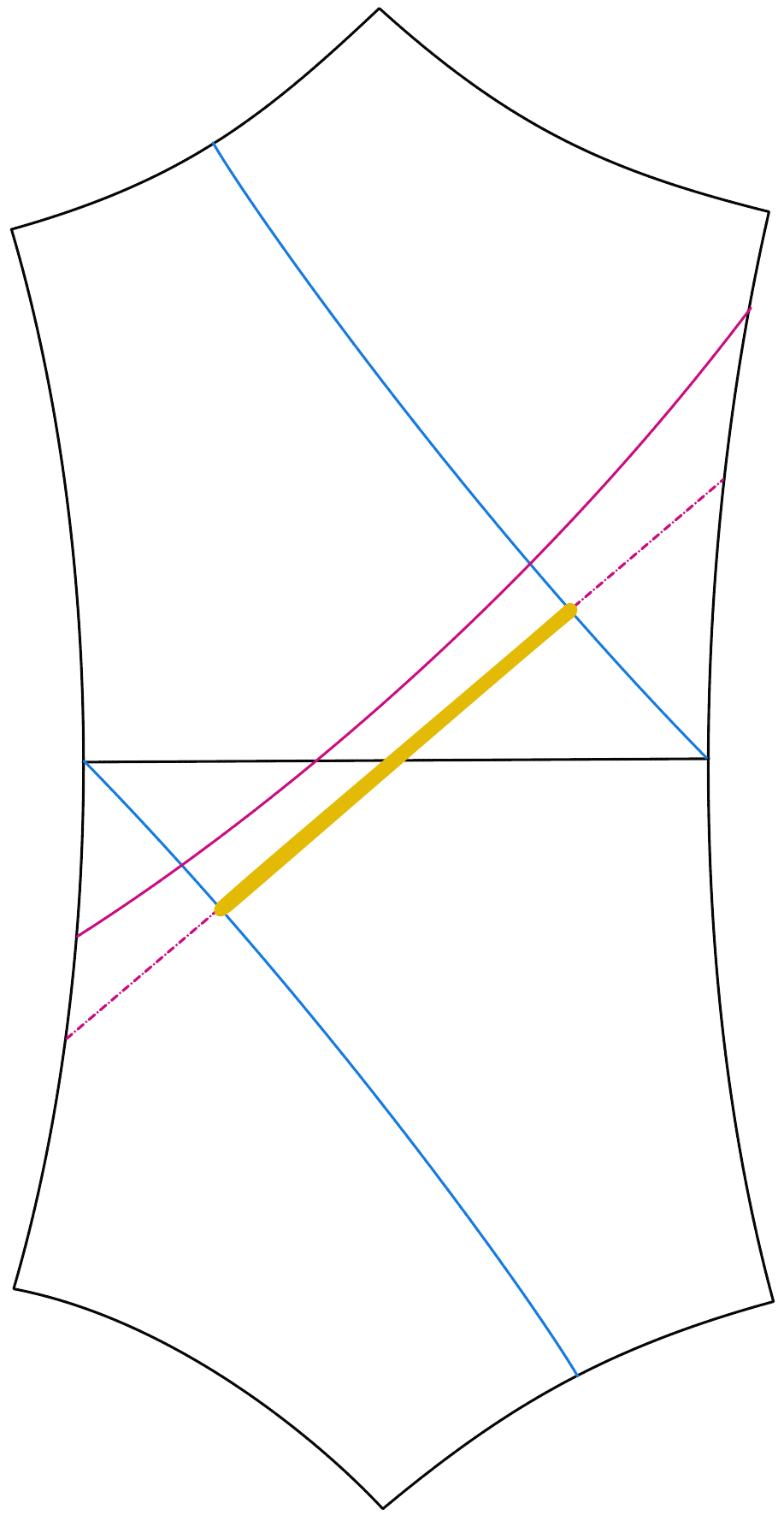}
\put(35,95){V}
\put(21,40){$c/2$}
\put(17,54){$c/2$}
\end{overpic}
\caption{For each combinatorial type of intersection between a hyperbolic geodesic and two adjacent pentagons, the highlighted segment shows the infimal length between the altitudes.}
\label{Fig:TypesWithLengths}
\end{figure}

In type I, \Cref{Fig:TypesWithLengths} shows that the worst-case scenario for $\len(\gamma_i)$ is $a \approx 0.5306$; since $a < c$, this complicated case is analyzed below. In type II, \Cref{Fig:TypesWithLengths} shows that we always have $\len(\gamma_i) \geq 2a > c$. In type III, we always have $\len(\gamma_i) \geq d > c$. In type IV, we always have $\len(\gamma_i) \geq f/2 > c$. Finally, in type V, we always have $\len(\gamma_i) \geq c$. The comparisons to $c$ come from the numerical values computed in  \Cref{Lem:PentagonLengths}. Thus, in every case except type I, we have $\len(\gamma_i) \geq c$.

It remains to analyze segments $\gamma_i$ of type I. We claim the following:
\begin{enumerate}[\:\: $(1)$]
\item\label{Claim:AdjacentType} If $\gamma_i$ is of type I, and $2 < i < n-1$, then there is an adjacent index $j = i \pm 1$ such that the corresponding
segment $\gamma_j$ is of type II, III, or IV. 
\item\label{Claim:AdjacentII} If $\gamma_j$ is of type II or IV, then $\gamma_i$ is the \emph{only} type--I segment adjacent to $\gamma_j$. 
\item\label{Claim:TotalLength} For all types of $\gamma_j$, we have $\len(\gamma_i) + \len(\gamma_j) \geq \min(3a, g) > 2c$.
\item\label{Claim:AdjacentIII} If $\gamma_j$ is of type III, and furthermore $\gamma_j$ also adjacent to a type--I segment $\gamma_{k}$ where $k = i \pm 2$, then $\len(\gamma_i) + \len(\gamma_j) + \len(\gamma_k) \geq 2e > 3c$. 
\end{enumerate}
Assuming these claims, we can complete the proof as follows. By Claim~\ref{Claim:AdjacentType}, every segment $\gamma_i$ of type I (where $3 \leq i \leq n-2$) borrows some of the length from an adjacent segment $\gamma_j$ of type II, III, or IV. By Claim~\ref{Claim:AdjacentII}, every segment $\gamma_j$ of type II or IV acts as a ``lender'' to at most one segment of type I.  Every segment $\gamma_j$ of type III acts as a ``lender'' to at most two segments of type I. In all cases, Claims~\ref{Claim:TotalLength} and~\ref{Claim:AdjacentIII} say that the average length of $\gamma_j$ and its adjacent type--I segments is more than $c$. 
Putting it all together, we have
\[
\dist_{\HH^2}(x,y) = \len(\gamma) = \sum_{i=1}^n \len(\gamma_i) > \sum_{i=3}^{n-2} c = (n-4) c = c \cdot \dist_{\widetilde X}(x,y) - 4 c,
\]
which completes the proof of \Cref{Eqn:HypLessCube}.

\begin{figure}[h]
\begin{overpic}[width=4in]{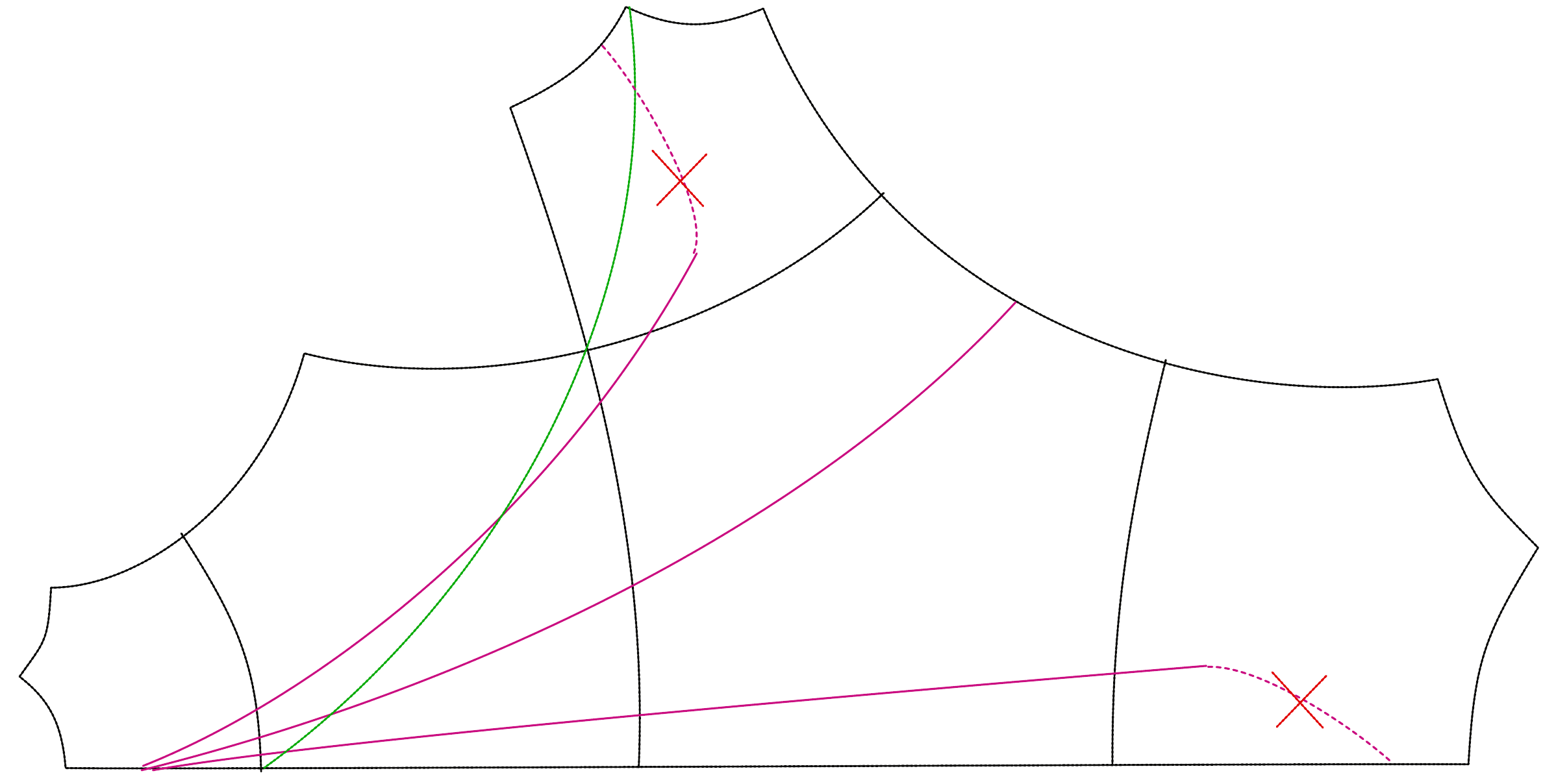}
\put(8,6){$P$}
\put(24,20){$Q$}
\put(58,15){$R$}
\put(73,9){II}
\put(64,33){III}
\put(46,34){IV}
\put(60,-2){$\delta$}
\put(38,38){$\xi$}
\put(32,5){$\gamma$}
\put(32,11){$\gamma$}
\put(25,14){$\gamma$}
\end{overpic}
\caption{Three possibilities for  a geodesic $\gamma$ that intersects $\{P, Q\}$ in a segment of type I. The next segment of $\gamma$ in pentagons $Q \cup R$ must be of type II, III, or IV.}
\label{Fig:TypeINeighbors}
\end{figure}

\Cref{Fig:TypeINeighbors} illustrates the proof of Claims~\ref{Claim:AdjacentType} and~\ref{Claim:AdjacentII}. The geodesic $\gamma$ intersects the left-most pair of pentagons $\{P, Q\}$ in a segment $\gamma_i \subset \gamma$ of type I. We assume without loss of generality that the indices are increasing as $\gamma$ traverses the figure from left to right. The continuation of $\gamma$ must exit the central pentagon  $R$ through one of the sides marked II, III, or IV, because  $\gamma$ cannot intersect  $\delta$ twice. This proves Claim~\ref{Claim:AdjacentType}. If the next segment $\gamma_j = \gamma_{i+1}$ is of type II,  then the following segment $\gamma_k = \gamma_{i+2}$ cannot be of type I, because otherwise $\gamma$ would again intersect $\delta$ twice. Similarly, if $\gamma_j = \gamma_{i+1}$ is of type IV, then the following segment $\gamma_k = \gamma_{i+2}$ cannot be of type I, because $\gamma$ cannot intersect the geodesic $\xi$ twice. This proves Claim~\ref{Claim:AdjacentII}.

\begin{figure}[h]
\begin{overpic}[width=3.5in]{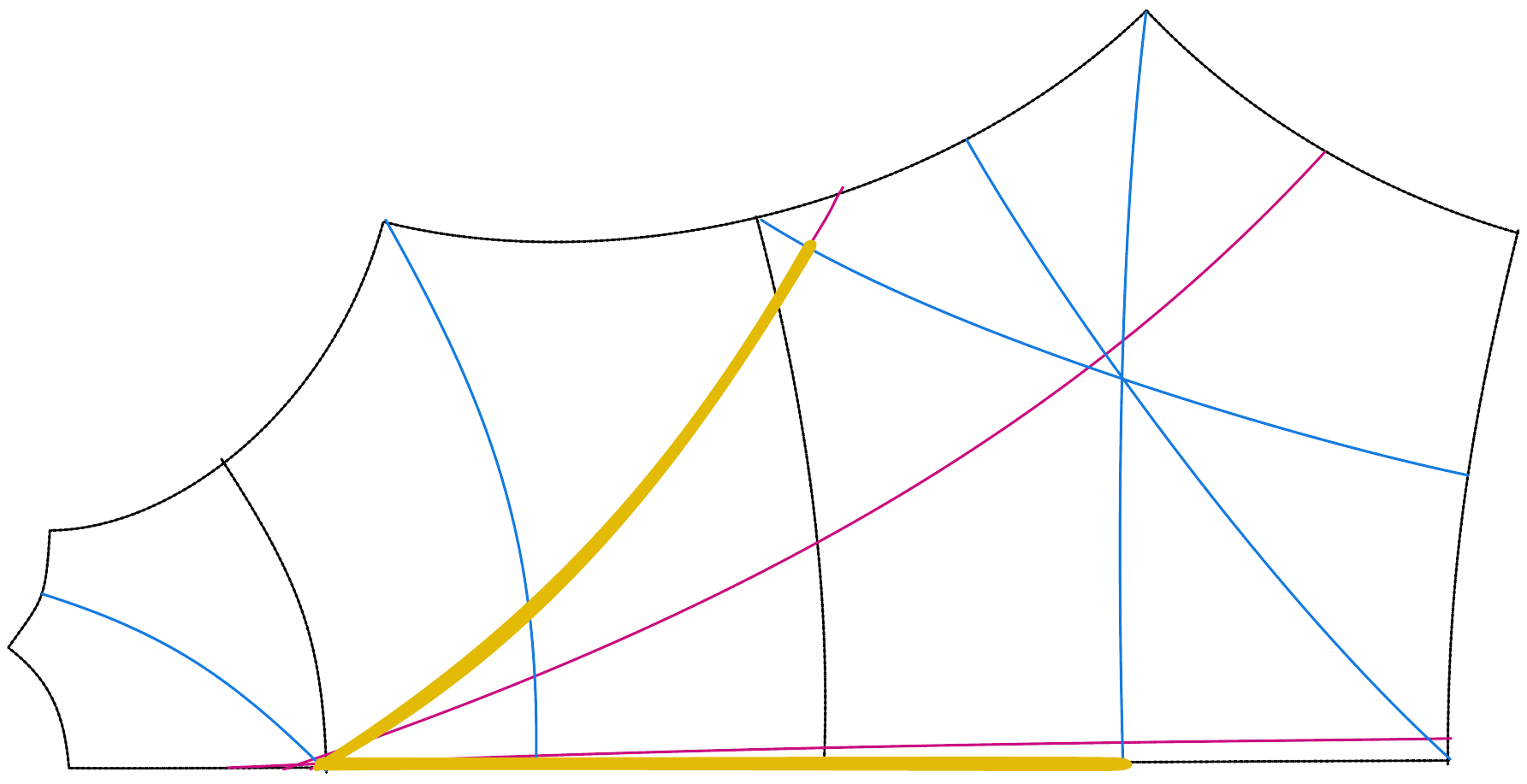}
\put(96,3){II}
\put(88,42){III}
\put(53,41){IV}
\put(30,4){$\gamma_i$}
\put(45,4){$\gamma_{i+1}$}
\put(57,16){$\gamma_{i+1}$}
\put(44,20){$\gamma_{i+1}$}
\put(74,10){$\ell_{\rm II}$}
\put(81,17){$\ell_{\rm III}$}
\put(58,34){$\ell_{\rm IV}$}
\put(42,-3){$3a$}
\put(38,19){$g$}
\put(80,-3){$\delta$}
\end{overpic}
\caption{The left-most segment $\gamma_i$ is of type I. The next segment $\gamma_j = \gamma_{i+1}$ is of type II, III, or IV, and terminates at the altitude  $\ell_{\rm II}$, $\ell_{\rm III}$, or $\ell_{\rm IV}$, respectively. In type II and type IV, the shortest possible lengths of $\gamma_i \cup \gamma_{i+1}$ are highlighted. In type III, $\gamma_{i+1}$ must intersect $\ell_{\rm II}$ or $\ell_{\rm IV}$.}
\label{Fig:LengthTwoSegments}
\end{figure}

For the three combinatorial types of $\gamma_j = \gamma_{i+1}$, \Cref{Fig:LengthTwoSegments} illustrates the infimal lengths of $(\gamma_i \cup \gamma_j)$. Let $\ell_{\rm II}, \ell_{\rm III}, \ell_{\rm IV}$ be the terminal altitudes for the three possible types of $\gamma_j$.
If $\gamma_j$ is of type II, then the worst-case scenario is when $\gamma$ fellow-travels $\delta$, hence we obtain $\len(\gamma_i \cup \gamma_j) >  3a > 2c$, where the final inequality uses \Cref{Lem:PentagonLengths}. If $\gamma_j$ is of type IV, then the worst-case scenario is when $\gamma_i \cup \gamma_j$ starts at the endpoint of an altitude and ends perpendicular to $\ell_{\rm IV}$. In this case, we have $\len(\gamma_i \cup \gamma_j) \geq g > 2c$, where the final inequality is by \Cref{Lem:PentagonLengths}. Finally, observe that if $\gamma_j$ is of type III, then $\gamma_j$ must intersect either $\ell_{\rm II}$ or $\ell_{\rm IV}$. Thus, by the cases already discussed, we have $\len(\gamma_i \cup \gamma_j) > 2c$. This proves Claim~\ref{Claim:TotalLength}.

\begin{figure}[h]
\begin{overpic}[width=3.5in]{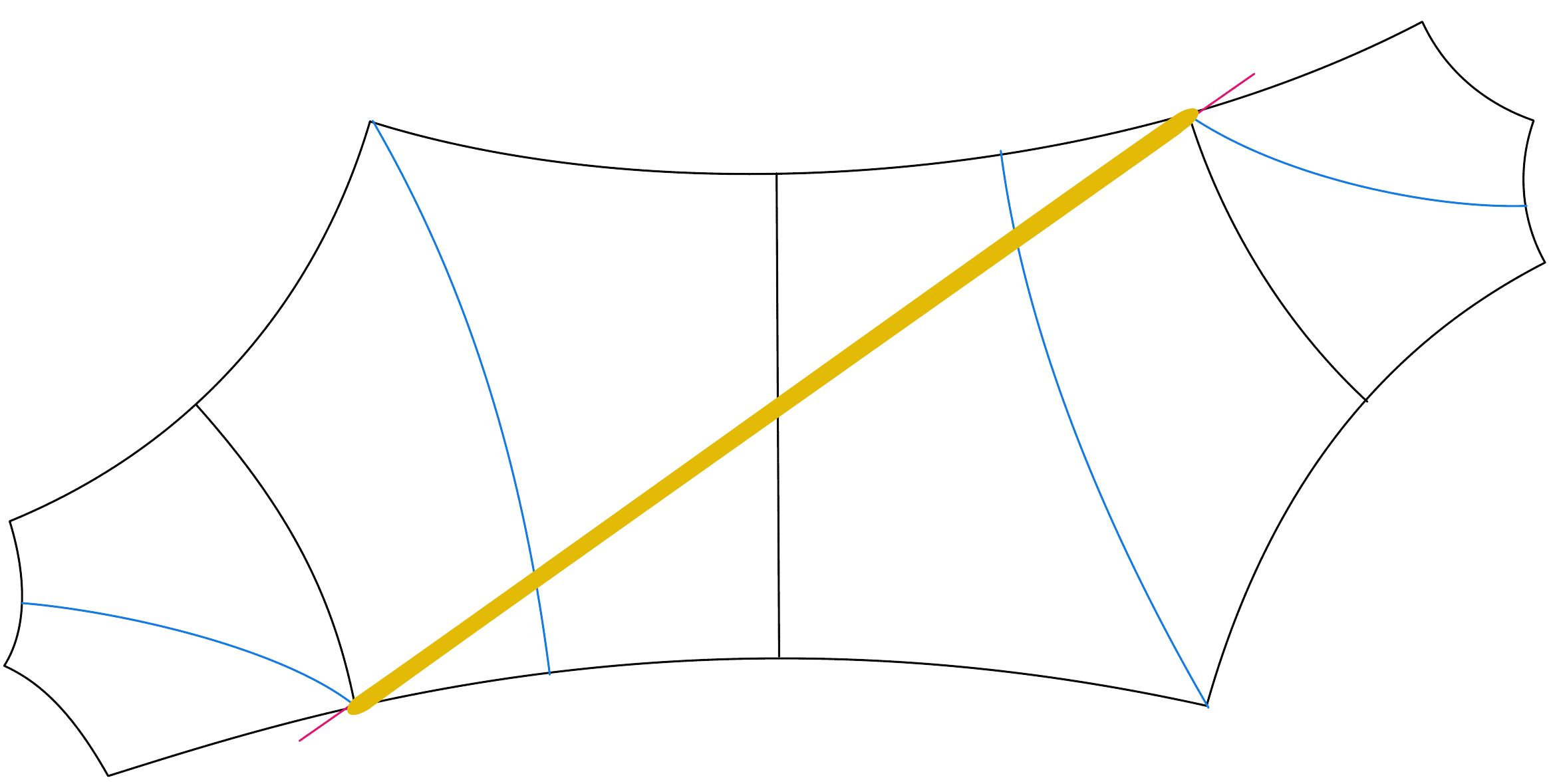}
\put(25,10){$\gamma_i$}
\put(40,24){$\gamma_{i+1}$}
\put(68,34){$\gamma_{i+2}$}
\end{overpic}
\caption{If $\gamma_j = \gamma_{i+1}$ is of type III and $\gamma_i, \gamma_{i+2}$ are both of type I, the shortest possible configuration is symmetric about the point in the center. In this configuration, each half of $(\gamma_i \cup \gamma_{i+1} \cup \gamma_{i+2})$ is a segment of length $e$.}
\label{Fig:LengthThreeSegments}
\end{figure}

Finally, \Cref{Fig:LengthThreeSegments} illustrates the proof of Claim~\ref{Claim:AdjacentIII}. If $\gamma_j = \gamma_{i+1}$ is of type III and $\gamma_i, \gamma_{i+2}$ are both of type I, then the shortest possible length of  $(\gamma_i \cup \gamma_j \cup \gamma_k)$ is $2e > 3c$, where the inequality follows from \Cref{Lem:PentagonLengths}. This completes the proof of the Claims, implying   \Cref{Eqn:HypLessCube} and the Proposition.
\end{proof}

\section{Problems}\label{Sec:Problems}

This section collects several problems and suggested directions for future research.

\begin{prob}\label{Prob:SurfaceOptimalDensity}
Determine the optimal density at which random quotients of a surface group $G$ are cubulated. How does the answer depend on the choice of proper metric on $G$?
\end{prob}

\begin{prob}\label{Prob:GeneralOptimalDensity}
Given a non-positively curved cube complex $X$ such that $G = \pi_1 X$ is hyperbolic, find the optimal density such that quotients of $G$ are cubulated. How does the answer depend on $X$ and the growth rate of its hyperplanes? 
\end{prob}

The following three problems are stated in order of increasing relatively hyperbolic ambition.

\begin{prob}\label{Prob:FreeProdOptimalDensity}
Let $G = G_1 * G_2$ be a free product of cubulated groups. Find the optimal density at which random quotients of $G$ are cubulated. Letting $G_i = \pi_1 X_i$, where $X_i$ is a non-positively curved cube complex, we think of $G = \pi_1 X$, where $X = X_1 \cup A \cup X_2$, where $A \cong[0,n]$ is an arc glued to $X_1$  and $X_2$ at its endpoints. Does the optimal density depend on the length of $A$? 
See Martin--Steenbock~\cite{MartinSteenbock} and Jankiewicz--Wise~\cite{JankiewiczWise}.
\end{prob}

\begin{prob}
Let $G$ be a relatively hyperbolic group that is the fundamental group of a compact non-positively curved cube complex. Show that there is a density at which random quotients of $G$ are cubulated (and relatively hyperbolic).
\end{prob}

\begin{prob}
Let $G$ be an acylindrically hyperbolic group that is the fundamental group of a compact non-positively curved cube complex. Show that there is a density at which random quotients of $G$ are cubulated.
\end{prob}

The following problem has not even been studied for virtually free groups:

\begin{prob}
Generalize \Cref{Thm:main} to cubulated hyperbolic groups with torsion.
\end{prob}

The main challenge is that cubical small-cancellation theory
was not described in the case where there is torsion. One could either develop that theory, or use another hyperbolic Dehn filling theory with walls (e.g.~Osin~\cite{Osin2007} or Groves--Manning~\cite{GrovesManning2008} or Dahmani--Guirardel--Osin~\cite{DGO}) to ensure that the relators embed in the universal cover of the quotient, and subsequently apply the criterion of \Cref{Thm:C'20Proper} which
can instead be applied to $\widetilde{X^*}$.)

\begin{prob}
Let $G$ be a non-elementary relatively hyperbolic group. Prove that there is a density such that all low-density quotients of $G$ are again relatively hyperbolic. Note that the (torsion-free) hyperbolic case was already handled by Ollivier \cite{Ollivier:PhaseTransition}.
\end{prob}

 An unlikely but more fanciful possibility is to use \Cref{Thm:main} to find new examples of hyperbolic groups that are not cubulated.

\begin{conj}\label{Conj:OptimalCubulateManifold}
Let $M = \HH^n / \Gamma$ be a closed hyperbolic $n$--manifold. Assume that either $n \leq 3$ or that $M$ is arithmetic of simplest type. Then, for every $\lambda > 1$, $M$ is homotopy equivalent to a compact non-positively curved cube complex $X$ such that there is a  $\Gamma$--equivariant $\lambda$--quasiisometry from $\widetilde X$ to $\widetilde M$.
\end{conj}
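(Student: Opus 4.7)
The strategy for all three cases is uniform: construct a $\Gamma$-invariant wallspace $\mathcal{H}$ in $\widetilde M = \HH^n$ whose hyperplanes are so numerous and evenly distributed in the unit tangent bundle that the dual CAT(0) cube complex $\widetilde X(\mathcal H)$ admits a $\Gamma$-equivariant $\lambda$-quasiisometry to $\HH^n$ with $\lambda$ as close to $1$ as desired. The hypothesis on $M$ plays only one role: to supply an inexhaustible family of codimension-$1$ (nearly) totally geodesic objects, namely geodesics when $n=2$, Kahn--Markovic quasi-Fuchsian surfaces when $n=3$, and totally geodesic hypersurfaces from the Bergeron--Haglund--Wise cubulation in the arithmetic simplest-type case.

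The first step is to formulate the correct notion of density. Call a $\Gamma$-invariant collection $\mathcal H$ of (quasi-)geodesic hyperplanes $(\ell_0,\epsilon,C)$--\emph{regular} if for every unit-speed hyperbolic geodesic segment of length $L \geq C$ the number of hyperplanes of $\mathcal H$ it crosses lies in $[(1-\epsilon) L/\ell_0,\,(1+\epsilon) L/\ell_0]$. The central geometric input to the conjecture is the claim that for every $\epsilon>0$ there exist $\ell_0,C$ and an $(\ell_0,\epsilon,C)$--regular $\Gamma$-invariant family. For $n=2$ this follows from Bowen-style equidistribution of closed geodesics applied to a carefully chosen finite union of $\Gamma$-orbits. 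In the arithmetic simplest-type case, totally geodesic codimension-$1$ submanifolds are governed by homogeneous dynamics and the desired regular families follow from Ratner--Mozes--Shah equidistribution together with a rescaling argument. For $n=3$ one uses Kahn--Markovic equidistribution of quasi-Fuchsian surfaces, combined with their exponential-in-genus quasi-Fuchsian distortion bounds, so that the deviation from total geodesicity perturbs crossing counts only by a factor tending to $1$ as the genus grows.

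Second, I would form the Sageev dual CAT(0) cube complex $\widetilde X(\mathcal H)$; by the Sageev construction together with the Bergeron--Wise (respectively Bergeron--Haglund--Wise) cubulation machinery it carries a proper cocompact $\Gamma$-action. Rescale all edges to length $\ell_0$, so that the combinatorial distance between any two $0$-cubes equals $\ell_0$ times the number of hyperplanes of $\mathcal H$ separating the components they represent. For a hyperbolic geodesic $\gamma$ of length $L$ between orbit points, $(\ell_0,\epsilon,C)$--regularity provides a combinatorial path of length at most $(1+\epsilon)L + O(\ell_0)$ by tracing its wall-crossings, while the lower bound of $(1-\epsilon)L - O(\ell_0)$ is automatic because every separating hyperplane must be crossed by any combinatorial path. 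Consequently the identity-on-orbits map is a $\lambda$-quasiisometry with $\lambda_1\lambda_2$ arbitrarily close to $(1+\epsilon)/(1-\epsilon)$ up to additive error, which can be made smaller than any prescribed $\lambda > 1$.

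The main obstacle will be establishing the $(\ell_0,\epsilon,C)$--regularity in a sufficiently sharp and uniform form, especially for $n=3$. The Kahn--Markovic surfaces are not totally geodesic, and while their distortion is small, controlling its effect uniformly over \emph{all} geodesic segments (not merely a measure-generic one) requires combining their equidistribution with a uniform deviation estimate between a nearly-geodesic plane and its straightened core. A secondary subtlety is that combinatorial geodesics in $\widetilde X(\mathcal H)$ might differ from the wall-crossing trace of a hyperbolic geodesic, but because the combinatorial distance is defined by the count of separating hyperplanes, both sides of the quasi-isometry comparison reduce to that count and this subtlety does not actually enter. For the arithmetic case the required equidistribution is classical and the program should run cleanly; for $n=2$ it is essentially a counting exercise once enough closed geodesics are thrown in.
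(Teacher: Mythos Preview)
This statement is labeled a \emph{Conjecture} in the paper, and the paper does not prove it. What follows the conjecture in \Cref{Sec:Problems} is explicitly described as supporting ``intuition'': equidistribution of long closed geodesics (for $n=2$), of Kahn--Markovic surfaces (for $n=3$), and density of the commensurator moving a totally geodesic hypersurface (arithmetic of simplest type). Your proposal is not being compared against a proof, because there is none; rather, you have written out a plausible programme that tracks the paper's heuristic almost exactly, supplies a candidate quantitative formulation (your $(\ell_0,\epsilon,C)$--regularity), and honestly flags the main obstacle.

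That said, one step in your sketch is misargued and would need repair even at the level of an outline. You write that the lower bound $\dist_{\widetilde X}(x,y) \geq (1-\epsilon)L - O(\ell_0)$ is ``automatic because every separating hyperplane must be crossed by any combinatorial path.'' But the combinatorial distance in the Sageev dual \emph{equals} the number of separating walls, so what you actually need is that the number of \emph{separating} walls is at least $(1-\epsilon)L/\ell_0$. Your regularity hypothesis only controls the number of walls \emph{crossed} by the hyperbolic geodesic $[x,y]$. When the walls are totally geodesic hyperplanes (the $n=2$ and arithmetic cases) these coincide, since a geodesic meets a totally geodesic hyperplane at most once; but for $n=3$ with quasi-Fuchsian walls a geodesic can cross the same wall several times, so ``crossed'' may strictly exceed ``separating,'' and the lower bound is exactly where the quasi-Fuchsian distortion bites. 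This is presumably what you had in mind under your ``main obstacle,'' but the sentence as written claims the wrong direction is automatic.

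A second point worth flagging: equidistribution theorems of Bowen, Ratner--Mozes--Shah, or Kahn--Markovic type are typically statements about weak-$*$ convergence of measures, hence about \emph{generic} or \emph{averaged} behaviour. Your $(\ell_0,\epsilon,C)$--regularity demands a bound valid for \emph{every} geodesic segment of length $\geq C$, which is a uniform deviation estimate rather than a limit theorem. Upgrading equidistribution to such uniformity is precisely the content that would turn the paper's intuition into a theorem, and it is not obviously available off the shelf even for $n=2$. The paper's authors evidently regard this as open, and your outline, while a faithful elaboration of their heuristic, does not close that gap.
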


Recall that the definition of a $\lambda$--quasiisometry appears in \Cref{Eqn:Lambda12}.

\Cref{Conj:OptimalCubulateManifold} is supported by the following intuition. If $M$ is a surface, then there is a plethora of cubulations of $M$ via closed--geodesics. As $\ell \to \infty$, a randomly chosen closed--geodesic $\gamma$ of length approximately $\ell$ is nearly equidistributed in $M$ (as well as in $T^1 M$). Therefore, the number of lifts of $\gamma$ separating a distant pair of points $p,q \in \widetilde M = \HH^2$ should be nearly proportional to $\dist_{\HH^2}(p,q)$. The same intuition applies for $n=3$, where a closed hyperbolic $3$--manifold $M$ has a plethora of cubulations via nearly geodesic surfaces that are similarly nearly equidistributed in $M$. See Kahn and Markovic \cite{KahnMarkovic09}.

Similarly, an arithmetic hyperbolic manifold $M = \HH^n / \Gamma$ of simplest type always contains totally geodesic (codimension-1) hypersurfaces. Since the commensurator  $\operatorname{Comm}(\Gamma)$ is dense in $\Isom(\HH^n)$, one may move a single hypersurface by many elements of $\operatorname{Comm}(\Gamma)$, as in \cite{BergeronHaglundWiseSimple}, to achieve  a cubulation of $\Gamma$  such that, again, $\dist_{\HH^n}(p,q)$ is nearly proportional to the number of hypersurfaces separating $p$ from $q$.

Brody and Reyes~\cite{BrodyReyes:HyperbolicCubulations} have very recently proved \Cref{Conj:OptimalCubulateManifold}, formalizing the intuition in the above discussion.

An ambitious generalization of \Cref{Conj:OptimalCubulateManifold} asks for arbitrarily homogeneous cubulations of more general groups:

\begin{question}\label{Ques:OptimalCubulation}
Let $G$ be a cubulated, hyperbolic group that acts geometrically on a metric space $\Upsilon$. Is it true that for every $\lambda > 1$, there exists a (proper, cocompact) $G$--action on a non-positively curved cube complex $\widetilde X$, admitting a $G$--equivariant $\lambda$--quasiisometry $f \from \widetilde X \to \Upsilon$?
\end{question}

Finally, we pose a probabilistic question about pieces that is prompted by \Cref{Rem:ProbabilisticQITranslation}.

\begin{question}\label{Ques:UpsilonTypicalPieces}
Suppose, as in \Cref{Thm:OtherSample}, that $G = \pi_1 X$, where $X$ is a compact non-positively curved cube complex. Suppose $G$ is hyperbolic, and acts properly and cocompactly on a geodesic metric space $\Upsilon$. Consider a cubical presentation $X^* = \langle X \mid Y_1, \ldots, Y_k \rangle$. 

Let $D = \diameter(X)$. For $\epsilon > 0$, a piece $P$ of $X^*$ is called \emph{$\Upsilon$--typical} if there are points $x,y \in P$ realizing $\dist_{\widetilde X}(x,y) = \diameter(P)$, and an $\Upsilon$--typical group element $g \in G$ such that $\dist_{\widetilde X}(gx, y) \leq D$. Recall that $\Upsilon$--typical group elements are defined in \Cref{Eqn:UpsilonTypical}, using an additive constant $\epsilon$.

Suppose that $[g_1], \ldots, [g_k]$ have been sampled uniformly from among all the conjugacy classes satisfying $\stabletrans{g}_{{}_\Upsilon} \leq \ell$, and  $X^* = \langle X \mid Y_1, \ldots, Y_k \rangle$ is the cubical presentation associated to $G / \nclose{g_1, \ldots, g_k}$. 
What is the distribution of pieces (above a certain threshold of diameter)? 
Is it true that with overwhelming probability, all sufficiently large pieces of $X^*$ are $\Upsilon$--typical?
\end{question}

A positive answer to \Cref{Ques:UpsilonTypicalPieces} would enable one to complete the line of argument outlined in \Cref{Rem:ProbabilisticQITranslation}. We suspect that the answer is ``yes'' for cone-pieces. We are less confident about wall-pieces, because the distribution of wall-pieces may depend on the $\Upsilon$--action of hyperplane stabilizers in $G$.

\bibliographystyle{alpha}
\bibliography{biblio.bib}

\end{document}